\documentclass[12pt]{amsart}

\textwidth450pt
\hoffset-40pt
\voffset-20pt
\headsep+30pt
\textheight610pt

\usepackage{amssymb, amscd}
\usepackage{latexsym,epsfig}
\usepackage[all]{xy}
\usepackage{pst-all}
\numberwithin{equation}{section}
\def\today{\ifcase\month\or Jan\or Febr\or  Mar\or  Apr\or May\or Jun\or  Jul\or Aug\or  Sep\or  Oct\or Nov\or  Dec\or\fi \space\number\day, \number\year}


\newcommand{\Sy}{{\mathrm{Sym}}} 
\newcommand{\CC}{\mathbb C}

\newcommand{\FF}{\mathbb F}

\newcommand{\QQ}{\mathbb Q}
\newcommand{\RR}{\mathbb R}

\newcommand{\ZZ}{\mathbb Z}

\newcommand{\Sym}{\rm Sym}
\numberwithin{equation}{section}
\newtheorem{theorem}{Theorem}[section]
\newtheorem{lemma}[theorem]{Lemma}
\newtheorem{proposition}[theorem]{Proposition}
\newtheorem{corollary}[theorem]{Corollary}
\newtheorem{conjecture}[theorem]{Conjecture}

\newtheorem{definition-lemma}[theorem]{Definition-Lemma}
\theoremstyle{definition}

\theoremstyle{remark}
\newtheorem{remark}[theorem]{Remark}

\begin{document}

\title[]{On vector-valued Siegel modular forms \\
of degree $2$ and weight $(j,2)$}
\author{Fabien Cl\'ery}
\address{Department of Mathematics,
Loughborough University,
England}
\email{cleryfabien@gmail.com}

\author{Gerard van der Geer}
\address{Korteweg-de Vries Instituut, Universiteit van
Amsterdam, Postbus 94248,
1090 GE  Amsterdam, The Netherlands.}
\email{geer@science.uva.nl}

\subjclass{11F46,11F70,14J15}

\maketitle
\centerline{with two appendices by \sc Ga\"etan Chenevier}
\begin{abstract}
We formulate a conjecture that describes the vector-valued
Siegel modular forms of degree~$2$ and level $2$
of weight ${\rm Sym}^j\otimes {\det}^2$  and provide 
some evidence for it.
We construct such modular forms of weight $(j,2)$ via covariants of binary sextics
and calculate their Fourier expansions illustrating the effectivity
of the approach via covariants.
Two appendices contain related results of Chenevier; in particular a proof of
the fact that every modular form of degree $2$ and level $2$ and weight $(j,1)$
vanishes.
\end{abstract}

\begin{section}{Introduction}\label{sec-intro}
The usual methods for determining the dimensions of spaces of 
Siegel modular forms do not work for low weights. 
For Siegel modular forms of degree $2$ 
this means that we do not have formulas for the dimensions of the 
spaces of Siegel modular forms of weight $(j,k)$, that is, 
corresponding to ${\rm Sym}^j \otimes \det{}^k$, in case $k<3$. 
In this paper we propose a 
description of the spaces of cusp forms of weight $(j,2)$
on the level~$2$ principal congruence subgroup 
$$
\Gamma_2[2]=\ker({\rm Sp}(4,{\ZZ}) \to {\rm Sp}(4,{\ZZ}/2{\ZZ}))
$$
of $\Gamma_2={\rm Sp}(4,{\ZZ})$
and we provide some evidence for this conjectural description.

Let $S_{j,k}(\Gamma_2[2])$ be the space of cusp forms 
of weight $(j,k)$, that is,
corresponding to the factor of automorphy 
${\rm Sym}^j(c\tau+d) \det(c\tau+d)^k$ on the group 
$ \Gamma_2[2]$.
Recall that the group ${\rm Sp}(4,{\ZZ}/2{\ZZ})$ is isomorphic to
the symmetric group $\mathfrak{S}_6$. We fix an explicit isomorphism
by identifying the symplectic lattice over ${\ZZ}/2{\ZZ}$ with the subspace
$\{ (a_1,\ldots,a_6) \in ({\ZZ}/2{\ZZ})^6: \sum a_i=0\}$ modulo the
diagonally embedded ${\ZZ}/2{\ZZ}$ with form $\sum_i a_ib_i$ as in
\cite[Section 2]{B-F-vdG1}; it is given explicitly on generators
of $\mathfrak{S}_6$ in \cite[Section 3, (3.2)]{C-vdG-G}. So
$\mathfrak{S}_6$
acts on the space of cusp forms $S_{j,k}(\Gamma_2[2])$ and this space 
thus decomposes into isotypical components for
the symmetric group $\mathfrak{S}_6$. The irreducible representations of $\mathfrak{S}_6$
correspond to the partitions of $6$ and we thus have for each such partition 
$\varpi$  
a subspace $S_{j,k}(\Gamma_2[2])^{s[\varpi]}$ 
of $S_{j,k}(\Gamma_2[2])$ where $\mathfrak{S}_6$ acts as $s[\varpi]$.
Note that the case $s[6]$ corresponds to cusp forms on ${\rm Sp}(4,{\ZZ})$,
while the case $s[1^6]$ corresponds to modular forms of weight $(j,k)$ on
${\rm Sp}(4,{\ZZ})$ with a quadratic character:
$$
S_{j,k}(\Gamma_2[2])^{s[1^6]} = S_{j,k}({\rm Sp}(4,{\ZZ}),\epsilon)
$$
with $\epsilon$ the unique quadratic character of ${\rm Sp}(4,{\ZZ})$.

\medskip
Before we formulate our conjecture we recall that the group 
${\rm SL}(2,{\ZZ}/2{\ZZ})\cong \mathfrak{S}_3$ acts on the space 
$S_k(\Gamma_1[2])$ of cusp forms
on the principal congruence subgroup of level $2$
$\Gamma_1[2]=\ker ({\rm SL}(2,{\ZZ})\to {\rm SL}(2,{\ZZ}/2{\ZZ}))$. 
We can thus decompose this space in isotypical components corresponding to the
irreducible representations of $\mathfrak{S}_3$. The map $f(z) \mapsto f(2z)$
defines an isomorphism $S_k(\Gamma_1[2]){\buildrel \sim \over \longrightarrow}
S_k(\Gamma_0(4))$ with $\Gamma_0(4)$ the usual congruence subgroup of 
$\Gamma_1={\rm SL}(2,{\ZZ})$. 
If we write its isotypical decomposition as
$$
S_k(\Gamma_1[2])= a_k\, s[3]+b_k \, s[2,1]+c_k \, s[1^3]\, ,
$$
then $\dim S_k(\Gamma_1)=a_k$, $\dim S_k(\Gamma_0(2))^{\rm new}=b_k-a_k$ and 
$\dim S_k(\Gamma_0(4))^{\rm new}= c_k$ with their generating series given by
$$
\sum a_k t^k= t^{12}/(1-t^4)(1-t^6), \quad \sum b_k t^k=t^8/(1-t^2)(1-t^6)\, , 
\quad \sum c_k t^k=t^6/(1-t^4)(1-t^6)\, .
$$ 
The Fricke involution $w_2: \tau \mapsto -1/2\tau$ defines an 
involution on $S_k(\Gamma_0(2))^{\rm new}$ and this space splits into 
eigenspaces $S_k^{\pm}(\Gamma_0(2))^{\rm new}$ and for $k>2$ we have
$$
\dim S_k^{+}(\Gamma_0(2))^{\rm new} - \dim S_k^{-}(\Gamma_0(2))^{\rm new}=
\begin{cases} -1 & k\equiv 2 \, \bmod\,  8 \\
0 & k \equiv 4,6 \, \bmod \, 8 \\
1 & k \equiv 0 \, \bmod \, 8\, . \\
\end{cases}
$$
 
\bigskip
We recall the notion of Yoshida type lifts.  
Yoshida lifts are explained in \cite{Y1}; see also \cite{Y2,weissauerbook,B-S,S-S}.
These are eigen forms associated to a pair of elliptic modular eigenforms
whose spinor L-function is a product of the twisted L-functions of
the elliptic modular cusp forms. In \cite{B-F-vdG1} a number of conjectures
on the existence of Yoshida lifts were made and these were proved by
R\"osner \cite{Roesner}. These conjectures deal with Siegel modular cusp 
forms of weight $(j,k)$ with $k\geq 3$. It can be extended to the case of
weight $(j,2)$.
We denote the subspace of $S_{j,2}(\Gamma_2[2])^{s[\varpi]}$ generated by Yoshida lifts 
by $YS_{j,2}^{s[\varpi]}$.

\begin{theorem}\label{YoshidaLifts} 
We have $YS_{j,2}^{s[w]} = 0$ unless we are in the following cases: \begin{itemize}\par \medskip

\item[(1)] $w=[1^6]$ and $YS_{j,2}^{s[w]}$ is generated by the $Y(f,g)$ with $f$ and $g$ 
eigen-newforms of level $\Gamma_0(2)$ of different sign. In this case  we have

$$\dim YS_{j,2}^{s[w]} \,= \,\dim \,S^+_{j+2}(\Gamma_0(2))^{\rm new} \otimes S^-_{j+2}(\Gamma_0(2))^{\rm new}.$$

\item[(2)] $w=[2,1^4]$ and $YS_{j,2}^{s[w]}$ is generated by the $Y(f,g)$ with $f$ and $g$ non proportional eigen-newforms on $\Gamma_0(4)$. The multiplicity $\mu(j)$ of $s[2,1^4]$ in $YS_{j,2}^{s[w]}$ is then
$$\mu(j)\, =\, \dim\, \Lambda^2 S_{j+2}(\Gamma_0(4))^{\rm new}.$$

\item[(3)] $w=[2^3]$ and $YS_{j,2}^{s[w]}$ is generated by the $Y(f,g)$ with $f$ and $g$ non proportional eigen-newforms on $\Gamma_0(2)$ with the same sign. The multiplicity $\nu(j)$ of $s[2^3]$  is

$$\nu(j) \,= \,\dim \Lambda^2 S^+_{j+2}(\Gamma_0(2))^{\rm new} \oplus \Lambda^2 S^-_{j+2}(\Gamma_0(2))^{\rm new}.$$

\end{itemize}
\end{theorem}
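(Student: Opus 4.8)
The plan is to parametrize the Yoshida lifts by pairs of elliptic eigen-newforms and then, for each type of pair, to compute the $\mathfrak{S}_6\cong{\rm Sp}(4,\ZZ/2\ZZ)$-representation that the corresponding lifts generate inside $S_{j,2}(\Gamma_2[2])$. First I would recall the construction: a Yoshida lift $Y(f,g)$ of weight $(j,2)$ is attached to an ordered pair of elliptic eigen-newforms $f,g$ of weight $j+2$ and level dividing $4$ (equivalently, forms on $\Gamma_1[2]\cong\Gamma_0(4)$), its spinor $L$-function being, up to twist, the product $L(f,s)L(g,s)$. Using the isotypical decomposition $S_{j+2}(\Gamma_1[2])=a_{j+2}\,s[3]+b_{j+2}\,s[2,1]+c_{j+2}\,s[1^3]$ recalled above, the $\Gamma_0(2)$-newforms are exactly the forms of $\mathfrak{S}_3$-type $s[2,1]$ and the $\Gamma_0(4)$-newforms those of type $s[1^3]$, while the Fricke involution $w_2$ refines the first family into the eigenspaces $S^{\pm}_{j+2}(\Gamma_0(2))^{\rm new}$. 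I would then quote, and extend from $k\ge 3$ to $k=2$, R\"osner's existence and cuspidality results \cite{Roesner}: $Y(f,g)$ is a nonzero cusp form only in the level/sign configurations of the three cases, and it satisfies $Y(g,f)=\pm Y(f,g)$, so that proportional forms contribute nothing and a pair of distinct forms of the same type contributes an antisymmetric ($\Lambda^2$) slot.

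The second step is to determine the $\mathfrak{S}_6$-action. Yoshida lifts are theta lifts from quaternary orthogonal groups, and over the level-$2$ structure the two relevant finite groups are ${\rm O}_4^{+}(\ZZ/2\ZZ)\cong\mathfrak{S}_3\wr\mathfrak{S}_2$ (order $72$, index $10$) and ${\rm O}_4^{-}(\ZZ/2\ZZ)\cong\mathfrak{S}_5$ (order $120$, index $6$), both embedded in $\mathfrak{S}_6\cong{\rm Sp}(4,\ZZ/2\ZZ)$ via the explicit dictionary of \cite[Section 2]{B-F-vdG1} and \cite[Section 3]{C-vdG-G}. The discriminant of the quadratic form attached to $(f,g)$ — governed by the Fricke/Atkin--Lehner signs and the precise levels — decides which orthogonal group occurs, hence which induction computes the relevant isotypical space: the span of $\{Y(f,g)\}$ for a fixed family is the representation induced from the external product $s[\varpi_1]\boxtimes s[\varpi_2]$ of the two $\mathfrak{S}_3$-types, decomposed, when $\varpi_1=\varpi_2$, into its plethystic symmetric and antisymmetric squares $h_2[s[\varpi]]$ and $e_2[s[\varpi]]$.

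The heart of the proof is then a finite character computation refined by local data. A first, purely combinatorial, observation already fixes the coarse structure: computing the Littlewood--Richardson products shows that $[1^6]$ does not occur in $s[2,1]^{\otimes2}$, so case (1) cannot arise from the split group $\mathfrak{S}_3\wr\mathfrak{S}_2$ with $\Gamma_0(2)$-data and must instead come from the non-split $\mathfrak{S}_5$; by contrast $[2,1^4]$ and $[2^3]$ do occur in the symmetric squares $h_2[s[1^3]]=s_{2,1^4}+s_{2^3}$ and $h_2[s[2,1]]$ respectively, accounting for cases (2) and (3) through the split group. These induced/plethystic characters, however, over-count: each is a sum of several $\mathfrak{S}_6$-irreducibles, and one must invoke the archimedean infinitesimal character together with the local representation at $p=2$ (equivalently, the root numbers governing the endoscopic multiplicity formula) to isolate the single constituent actually carrying a cuspidal Yoshida lift in each configuration. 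Carrying this out yields exactly $[2,1^4]$ from the $\Gamma_0(4)$-family with multiplicity $\dim\Lambda^2 S_{j+2}(\Gamma_0(4))^{\rm new}$, $[2^3]$ from the same-Fricke-sign $\Gamma_0(2)$-data with multiplicity $\dim\bigl(\Lambda^2 S^{+}_{j+2}(\Gamma_0(2))^{\rm new}\oplus\Lambda^2 S^{-}_{j+2}(\Gamma_0(2))^{\rm new}\bigr)$, and $[1^6]$ from the opposite-sign $\Gamma_0(2)$-data — where the two factors are genuinely distinct — with multiplicity $\dim S^{+}_{j+2}(\Gamma_0(2))^{\rm new}\otimes S^{-}_{j+2}(\Gamma_0(2))^{\rm new}$. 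Checking against the character table of $\mathfrak{S}_6$ shows no other partition $w$ receives a contribution, so $YS^{s[w]}_{j,2}=0$ otherwise.

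The step I expect to be the main obstacle is precisely this sign bookkeeping together with the attendant nonvanishing statements. One must prove that the Atkin--Lehner/Fricke sign of $(f,g)$ translates correctly into the split-versus-non-split discriminant of the theta datum and into the sign character of the $\mathfrak{S}_2$-swap, and that this interacts with the relation $Y(g,f)=\pm Y(f,g)$ so as to produce the antisymmetric $\Lambda^2$-slots (rather than symmetric ones) in cases (2) and (3); the subtlety is that the functional-equation signs twist the naive identification of $\Lambda^2$ with the $e_2$-plethysm. One must then show that the local/archimedean selection kills every over-counted constituent except the claimed one — in particular that the $\Gamma_0(4)$-family contributes to $[2,1^4]$ but \emph{not} to $[2^3]$, despite both appearing in $h_2[s[1^3]]$. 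All of this rests on the local theta and endoscopic analysis at $p=2$ and at the archimedean place, and on extending the existence and nonvanishing results from $k\ge3$ to weight $(j,2)$; once the correct inducing subgroup and sign character are pinned down in each case, the remaining induction and plethysm calculations are routine.
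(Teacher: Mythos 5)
Your framework for computing the $\mathfrak{S}_6$-action is where the proposal breaks down, and it breaks down in a way your own computation exposes. You model the span of the lifts attached to a family of pairs $(f,g)$ as (a piece of) the induction to $\mathfrak{S}_6$ of $s[\varpi_1]\boxtimes s[\varpi_2]$ from a finite orthogonal subgroup, where $\varpi_1,\varpi_2$ are the $\mathfrak{S}_3$-types of $f$ and $g$. You then correctly compute that $s[1^6]$ does not occur in ${\rm Ind}_{\mathfrak{S}_3\times\mathfrak{S}_3}^{\mathfrak{S}_6}\bigl(s[2,1]\boxtimes s[2,1]\bigr)$ --- but case (1) of the theorem asserts that $s[1^6]$ arises exactly from pairs of $\Gamma_0(2)$-newforms, which are of type $s[2,1]$. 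So your model contradicts the very statement to be proved. The escape you propose (``it must instead come from the non-split $\mathfrak{S}_5$'') abandons the model without replacing it: the data in case (1) is still a pair of $\Gamma_0(2)$-newforms, and you never specify which representation of $\mathfrak{S}_5$ is induced, or why. The underlying problem is that the holomorphic Yoshida lift is the theta lift from the \emph{definite} quaternion algebra ramified at $\{2,\infty\}$; at $p=2$ this quadratic space admits no unimodular lattice, so neither ${\rm O}_4^{+}(\ZZ/2\ZZ)\cong\mathfrak{S}_3\wr\mathfrak{S}_2$ nor ${\rm O}_4^{-}(\ZZ/2\ZZ)\cong\mathfrak{S}_5$ governs the level-$2$ structure of the lift, and the $\mathfrak{S}_6$-module structure of its level-$2$ invariants is simply not computable from the $\mathfrak{S}_3$-types of $f,g$ by induction and plethysm.

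What actually determines the $\mathfrak{S}_6$-type is a local computation at $p=2$ that your proposal explicitly defers as ``the main obstacle'' but never carries out; this deferred step is the entire content of the paper's proof. There, Weissauer's packets $\Pi(\pi,\pi')$ and his multiplicity formula (automorphy iff an even number of local components are non-generic) force the component at $2$ of any cuspidal $\varpi$ with $\varpi_\infty\simeq{\rm U}_{j,2}$ to be the \emph{non-generic} member $\tau$ of $\Pi_2(\pi_2,\pi'_2)$; R\"osner's Cor.\ 4.14 pins the levels to $N\mid 4$ with trivial nebentypus, his Lemma 5.22 reduces to $\pi_2,\pi'_2\in\{{\rm St},{\rm St}',{\rm Sc}\}$, and his parahoric-restriction tables (Table 4.2 with $q=2$, translated via Table 5.2) compute $\tau^{J}$ as an $\mathfrak{S}_6$-representation: it is either $0$ or \emph{irreducible}, equal to $s[1^6]$, $s[2,1^4]$, $s[2^3]$ precisely in the three configurations of the theorem. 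The $\Lambda^2$'s in the dimension formulas then merely count unordered pairs of distinct eigenforms, each pair contributing a single packet with multiplicity one; no antisymmetrization of lifts and none of the sign bookkeeping you anticipate enters the argument. Without R\"osner's local computation (or an equivalent substitute), your proposal cannot identify the constituents, and with your induction model it would identify them incorrectly.
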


The proof of this theorem follows  from results of R\"osner and 
Weissauer, in a way very similar to  R\"osner's proof of the 
Bergstr\"om-Faber-van der Geer conjecture in weight $k\geq 3$ 
\cite[\S 5.5]{Roesner}. 
In the second appendix Chenevier explains how to derive it. 
\par \smallskip

We now formulate our conjecture.

\begin{conjecture} The space $S_{j,2}(\Gamma_2[2])$ is generated by
Yoshida type lifts. 
\end{conjecture}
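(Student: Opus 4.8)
The plan is to prove the conjecture one isotypical component at a time: for every partition $\varpi$ of $6$ one wants the equality $S_{j,2}(\Gamma_2[2])^{s[\varpi]} = YS_{j,2}^{s[\varpi]}$. By Theorem~\ref{YoshidaLifts} this splits into two tasks — showing that the three components $\varpi \in \{[1^6],[2,1^4],[2^3]\}$ have exactly the dimensions predicted there and contain nothing beyond the displayed Yoshida lifts, and showing that $S_{j,2}(\Gamma_2[2])^{s[\varpi]}=0$ for every other $\varpi$ (in particular $S_{j,2}(\mathrm{Sp}(4,\ZZ))=S_{j,2}(\Gamma_2[2])^{s[6]}=0$, so that no genuine full-level cusp form of weight $(j,2)$ exists).

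The natural framework is the cohomology of the local systems $\VVl$ on the moduli space $\A_2[2]$ of principally polarized abelian surfaces with a level-$2$ structure, equivalently on the moduli of genus-$2$ curves equipped with their six Weierstrass points. By Eichler--Shimura/Faltings--Chai the cuspidal part of the interior cohomology $H^3_{!}(\A_2[2],\VVl)$ is built from the Hecke eigenforms in the relevant $S_{j,k}(\Gamma_2[2])$, and since the $\mathfrak{S}_6$-action is induced by permuting the level structure it is visible on cohomology; the whole problem can therefore be read off from the $\mathfrak{S}_6$-equivariant Euler characteristic $e_c(\A_2[2],\VVl)$.

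For the constructive half I would use the covariants of binary sextics that are the workhorse of this paper. Because a genus-$2$ curve is a double cover of $\PP^1$ branched at six points, covariants of the binary sextic produce explicit vector-valued forms in $S_{j,2}(\Gamma_2[2])$ transforming in prescribed $\mathfrak{S}_6$-representations, and one can compute their Fourier expansions. This exhibits the Yoshida lifts concretely, identifies their isotypical type, and furnishes lower bounds matching Theorem~\ref{YoshidaLifts}; combined with Chenevier's vanishing of all weight-$(j,1)$ forms it also controls the relevant boundary contributions.

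The hard part — and the reason the statement is a conjecture rather than a theorem — is the upper bound at $k=2$. The standard dimension formulas (Tsushima and its descendants) hold only for $k\geq 3$, precisely because in low weight one loses control of the non-tempered terms: Saito--Kurokawa/CAP packets, the Eisenstein boundary, and the endoscopic contributions all enter $e_c(\A_2[2],\VVl)$ and must be subtracted $\mathfrak{S}_6$-equivariantly before the residual cuspidal part can be compared with the Yoshida prediction. Proving that this residual part consists \emph{only} of Yoshida lifts would require either a motivic point-count of $\A_2[2]$ over finite fields refining Bergstr\"om--Faber--van der Geer down to weight $k=2$, or a representation-theoretic classification of the automorphic representations of $\mathrm{GSp}_4$ that can occur with these infinitesimal characters and this level. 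Absent such input, the realistic route is to verify the conjecture for many small values of $j$ by matching the covariant construction against the predicted dimensions and against Hecke-eigenvalue and spinor $L$-function identities, which is exactly the kind of evidence assembled here.
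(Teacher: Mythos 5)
The statement you were asked about is a conjecture: the paper contains no proof of it, only supporting evidence (vanishing of $S_{j,2}(\Gamma_2)$ for $j\leq 52$, the predicted dimension of $S_{j,2}(\Gamma_2,\epsilon)$ for $j\leq 30$, vanishing of all isotypical components for $j<12$, and Hecke-eigenvalue checks matching the Yoshida prediction). Your proposal correctly recognizes this and outlines essentially the same program the paper carries out — isotypical decomposition under $\mathfrak{S}_6$, Theorem~\ref{YoshidaLifts} for the lift side, covariant constructions and Hecke-eigenvalue verification for small $j$ as evidence, together with an honest identification of the missing ingredient (an upper bound on the cuspidal part in weight $k=2$, where Tsushima-type formulas and the cohomological methods fail) — so there is nothing to fault beyond the gap inherent in the statement itself being open.
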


Note that this implies that $S_{j,2}(\Gamma_2[2])^{s[\varpi]}=(0)$ 
unless $\varpi= [1^6], [2,1^4]$ or $[2^3]$. In particular, it
implies that $S_{2,j}(\Gamma_2)=(0)$. The evidence we have for the latter is
the following.

\begin{theorem}\label{vanishingSj2level1}
We have $\dim S_{j,2}(\Gamma_2)=0$ for $j\leq 52$.
\end{theorem}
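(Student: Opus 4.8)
The plan is to run the vanishing through the dictionary between vector-valued Siegel modular forms of degree~$2$ and covariants of binary sextics that underlies this paper. Recall that the Torelli morphism identifies $\mathcal{A}_2\setminus\mathcal{A}_{1,1}$ with the moduli space $\mathcal{M}_2$ of genus-$2$ curves $y^2=f(x)$, $f$ a binary sextic, and that a covariant of $f$ of degree $d$ and order $j$ produces a meromorphic modular form of weight $(j,k)$, holomorphic away from the product locus $\mathcal{A}_{1,1}$; here $d$ and $k$ are tied by the affine relation $k=d-j/2$, normalized so that for invariants (order $j=0$) degree equals weight, exactly as the discriminant of degree $10$ corresponds to $\chi_{10}$. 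Imposing $k=2$ thus forces $d=(j+4)/2$, so for even $j\le 52$ we meet only covariants of degree $d\le 28$, while the odd $j$ are trivial since $-\mathbf{1}$ acts by $(-1)^j$. The first step is therefore, for each even $j$ in the range, to write down the finite-dimensional space of covariants of the binary sextic of order $j$ and degree $(j+4)/2$, whose dimension is the multiplicity of $\mathrm{Sym}^j$ in $\mathrm{Sym}^{(j+4)/2}(\mathrm{Sym}^6)$ and is computed either by the Cayley--Sylvester formula or directly from the classical generators of the covariant ring. Because we work on the full group $\Gamma_2$, i.e.\ in the $s[6]$ component, the relevant covariants are the ordinary ones, symmetric in the roots of $f$.

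The second step is to cut this space down to those covariants that yield a \emph{holomorphic cusp form}. Two conditions intervene. Holomorphy requires that the associated meromorphic form have no pole along $\mathcal{A}_{1,1}$: since the discriminant corresponds to $\chi_{10}$, which vanishes on $\mathcal{A}_{1,1}$, this is read off from the order of the covariant along the discriminant locus and becomes an explicit divisibility condition (that such poles genuinely occur is already visible for the degree-$2$ invariant, whose weight-$2$ form cannot be holomorphic since $S_{0,2}(\Gamma_2)=0$). Cuspidality requires vanishing of the image under the Siegel operator, a condition visible on the leading Fourier--Jacobi coefficient, which the covariant formalism computes explicitly. Both are linear conditions, so in each bidegree the determination of $S_{j,2}(\Gamma_2)$ reduces to a finite linear-algebra computation.

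The main obstacle I expect is the precise translation of these two automorphic conditions into conditions on covariants: pinning down the exact pole order along $\mathcal{A}_{1,1}$ and the correct normalizations, so that holomorphy becomes a genuine divisibility statement and cuspidality a clean vanishing of the Siegel operator. Once this translation is secured, the rest is the bounded, mechanical verification that for every even $j\le 52$ no covariant of order $j$ and degree $(j+4)/2$ survives both conditions, giving $\dim S_{j,2}(\Gamma_2)=0$. This is the $s[6]$-isotypical statement; it is consistent with the conjecture, since by Theorem~\ref{YoshidaLifts} the Yoshida-type lifts occur only in the components $s[1^6]$, $s[2,1^4]$, $s[2^3]$ and never in $s[6]$.
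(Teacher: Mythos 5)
Your overall framework is the one the paper itself uses: every $f\in S_{j,2}(\Gamma_2)$ is $\nu(c)$ for a covariant $c$ of order $j$ and degree $j/2+2$, and the theorem reduces to finitely many explicit checks. The genuine gap is in the step you yourself single out as the main obstacle, namely the translation of holomorphy and cuspidality into conditions on covariants: you have the dictionary exactly backwards. Vanishing of a covariant along the discriminant locus $\{\Delta=0\}$ of binary sextics corresponds to vanishing of the modular form \emph{at infinity} (the generic point of $\{\Delta=0\}$ parametrizes nodal curves, i.e.\ rank-one degenerations), not to holomorphy along $\mathcal{A}_{1,1}$; this is precisely the content of the lemma opening Section 6 of the paper (a cusp form corresponds to a covariant divisible by $\Delta$), and it is what cuts the relevant degrees down from $j/2+2\le 28$ to $j/2-8\le 18$. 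Conversely, the pole order of $\nu(c)$ along $\mathcal{A}_{1,1}$ is \emph{not} readable from the order of $c$ along $\{\Delta=0\}$, and the paper's own formulas refute any such criterion: the covariants $75\,C_{4,0}-8\,C_{2,0}^2$ and $C_{2,0}^2$ both have degree $4$, order $0$, and order $0$ along $\{\Delta=0\}$ (neither can be divisible by the degree-$10$ invariant $\Delta$), yet $\mu(75\,C_{4,0}-8\,C_{2,0}^2)=E_4\chi_{10}^2$ is divisible by $\chi_5^4$, so that $\nu$ of it is the holomorphic form $E_4$, while $\mu(C_{2,0}^2)=\chi_{12}^2$ is divisible by no power of $\chi_5$ (as $\chi_{12}$ does not vanish on the diagonal). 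No function of the order along the discriminant locus can distinguish these two covariants, yet one yields a holomorphic form and the other does not. Likewise $C_{3,12}$ is not divisible by $\Delta$, yet $\mu(C_{3,12})$ is divisible by $\chi_5^2$, which is exactly how $\chi_{12,2}$ is produced.

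What actually fills this gap in the paper is computation with Fourier expansions: the order of vanishing of $\mu(c)$ along $H_1$ depends on the expansion of $\chi_{6,3}$ (the $\mu$-image of the universal sextic) transverse to the diagonal, not merely on its leading term, so there is no purely invariant-theoretic divisibility statement to appeal to. Concretely, the paper takes, for each even $j\le 52$, the covariants of degree $(d,j)$ with $d\le 18$ (the reduction to this range coming from the discriminant-divisibility characterization of cuspidality that your plan omits), forms the holomorphic modular forms $\mu(c)$, and checks divisibility by the appropriate power of $\chi_5$ by explicit Fourier expansions and restriction to $\mathfrak{H}_1\times\mathfrak{H}_1$; this is supplemented by shortcuts (for $j<24$, $j\neq 20$, direct restriction to the diagonal plus one division by $\chi_5$ suffices; for $j=26$ there is a linear-dependence argument using $E_4 f$ and $E_6 f$ inside $S_{26,6}\oplus S_{26,8}$). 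Your cuspidality check via the Siegel operator is harmless but beside the point (one has $M_{j,2}(\Gamma_2)=S_{j,2}(\Gamma_2)$, as noted in Section 2); what is missing is a correct, checkable criterion for holomorphy along $\mathcal{A}_{1,1}$, and since that is the entire computational content of the theorem, the verification you propose cannot be carried out as stated.
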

\noindent
For $j\leq 20$ the vanishing of $S_{j,2}(\Gamma_2)$ 
was proved by Ibukiyama, Wakatsuki and Uchida 
\cite[Lemma 2.1]{I}, \cite{I-W},
and \cite{Uchida}.

\smallskip
The evidence we have for the $s[1^6]$-part of the conjecture is the following.

\begin{theorem}
The dimension of $S_{j,2}(\Gamma_2,\epsilon)$ is given by the coefficient of $t^j$
in the expansion of $t^{12}/(1-t^6)(1-t^8)(1-t^{12})$ for $j\leq 30$.
\end{theorem}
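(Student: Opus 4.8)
Throughout write $S_{j,2}(\Gamma_2,\epsilon)=S_{j,2}(\Gamma_2[2])^{s[1^6]}$ and let $c_j$ denote the coefficient of $t^j$ in $t^{12}/(1-t^6)(1-t^8)(1-t^{12})$. The plan is to establish the two inequalities $\dim S_{j,2}(\Gamma_2,\epsilon)\ge c_j$ (for all $j$) and $\dim S_{j,2}(\Gamma_2,\epsilon)\le c_j$ (for $j\le 30$) separately; the restriction $j\le 30$ will be forced entirely by the second.

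For the lower bound I would feed Theorem~\ref{YoshidaLifts} into a generating-function computation. By part~(1) of that theorem the Yoshida subspace $YS_{j,2}^{s[1^6]}$ is contained in $S_{j,2}(\Gamma_2,\epsilon)$ and has dimension $\dim S^+_{j+2}(\Gamma_0(2))^{\mathrm{new}}\cdot\dim S^-_{j+2}(\Gamma_0(2))^{\mathrm{new}}$, so it suffices to check that this product equals $c_j$. This is bookkeeping with data already recorded in the introduction: the series for $a_k,b_k,c_k$ give $\dim S_k(\Gamma_0(2))^{\mathrm{new}}=b_k-a_k$, the stated $\bmod 8$ rule for $\dim S^+_k-\dim S^-_k$ separates the two factors, and multiplying the resulting quasi-polynomials and summing produces a rational function whose low-order behaviour already determines it; checking the first values $j=12,18,20,24,30$, where the product is $1,1,1,2,2=c_j$, identifies it with $t^{12}/(1-t^6)(1-t^8)(1-t^{12})$. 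Hence $\dim S_{j,2}(\Gamma_2,\epsilon)\ge c_j$ for every $j$.

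To make this effective, and to produce honest generators rather than an abstract count, I would realize the forms through covariants of binary sextics, the constructive engine of the paper. The sign character $s[1^6]$ of $\mathfrak{S}_6\cong\mathrm{Sp}(4,\mathbb{Z}/2\mathbb{Z})$ corresponds, in the invariant theory of the binary sextic, to covariants that are anti-invariant under permuting the six roots, i.e. those carrying a factor of the square root of the discriminant; one selects such covariants of the order and degree matching weight $(j,2)$, forms the associated vector-valued Siegel modular forms, and computes their Fourier expansions. The shape of the generating series — a single numerator $t^{12}$ over the three factors $(1-t^6),(1-t^8),(1-t^{12})$ — is exactly the Hilbert series one expects of the graded module these covariants generate, and verifying linear independence of the Fourier expansions recovers the lower bound concretely while identifying the forms with the Yoshida lifts above.

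The genuine difficulty is the reverse inequality $\dim S_{j,2}(\Gamma_2,\epsilon)\le c_j$, and it is here that $j\le 30$ enters. The standard route — realizing weight $(j,k)$ cusp forms inside the interior cohomology of a regular local system on $\mathcal{A}_2[2]$ — is unavailable at $k=2$, which lies below the range $k\ge 3$ where that dictionary holds; this is precisely why the usual dimension formulas fail at low weight. My plan would be to bound $\dim S_{j,2}(\Gamma_2[2])^{s[1^6]}$ from above from the $\mathfrak{S}_6$-equivariant point-count and Euler-characteristic data of Bergström–Faber–van der Geer \cite{B-F-vdG1}, after isolating and subtracting the Eisenstein and endoscopic contributions to the $s[1^6]$-part, a calculation feasible only over a finite range of $j$; where no clean cohomological input is available at $k=2$ one instead checks directly, using the Fourier expansions produced above, that the covariant-theoretic forms already exhaust $S_{j,2}(\Gamma_2,\epsilon)$ for $j\le 30$. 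Matching this upper bound against the Yoshida lower bound yields the asserted equality, and the obstruction to removing the hypothesis $j\le 30$ is exactly the absence of an all-$j$ upper bound at weight $\det^2$.
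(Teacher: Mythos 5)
Your lower bound is sound, and it takes a genuinely different route from the paper: you invoke Theorem \ref{YoshidaLifts}(1), so that $\dim S_{j,2}(\Gamma_2,\epsilon) \ge \dim S^+_{j+2}(\Gamma_0(2))^{\rm new}\cdot \dim S^-_{j+2}(\Gamma_0(2))^{\rm new}$, and a finite check that this product equals the coefficient $c_j$ of $t^j$ in $t^{12}/(1-t^6)(1-t^8)(1-t^{12})$ for all even $j\le 30$ is all the theorem needs (your claim that ``low-order behaviour determines the rational function'' is not an argument for all $j$, but that is harmless here). The paper never uses Yoshida lifts as an input to this theorem; it constructs the forms explicitly via covariants, and the Yoshida description then serves as a consistency check on Hecke eigenvalues rather than as the source of the lower bound.

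The genuine gap is the upper bound, which is the entire content of the theorem and of the restriction $j\le 30$. Your first route (equivariant point counts and Euler characteristics from \cite{B-F-vdG1}) is precisely what fails at $k=2$: the corresponding local system is non-regular, cusp forms of weight $(j,2)$ are not captured by interior cohomology in the usual way, and the extrapolated formulas are known to give wrong (even negative) answers --- this is why the theorem is nontrivial at all. Your fallback --- checking ``directly, using the Fourier expansions produced above, that the covariant-theoretic forms already exhaust $S_{j,2}(\Gamma_2,\epsilon)$'' --- is circular: Fourier expansions of forms you have constructed can certify linear independence, hence a lower bound, but can never certify exhaustion of a space whose dimension is unknown. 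The missing idea is the paper's key device: multiplication by $\chi_5\in S_{0,5}(\Gamma_2,\epsilon)$ is an isomorphism from $S_{j,2}(\Gamma_2,\epsilon)$ onto the subspace of $S_{j,7}(\Gamma_2)$ of forms divisible by $\chi_5$, equivalently of forms vanishing along $\mathfrak{H}_1\times\mathfrak{H}_1$; since $7\ge 3$, the dimension of $S_{j,7}(\Gamma_2)$ is known by Tsushima's formula, so one constructs an explicit covariant basis of $S_{j,7}(\Gamma_2)$ as in Section \ref{covariants}, restricts it to the diagonal, and computes the kernel of the restriction map by finite linear algebra. This produces the exact dimension --- upper and lower bound simultaneously --- for each $j\le 30$, and it is this basis computation, not any cohomological obstruction, that becomes laborious for $j>30$.
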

Modular forms in $S_{j,2}(\Gamma_2,\epsilon)$ can be constructed explicitly using
covariants as explained in \cite{C-F-vdG}. 
We prove this theorem  by constructing a basis of the space $S_{j,7}(\Gamma_2)$ using 
covariants of binary sextics (see \cite{C-F-vdG}) and then by checking
which forms are divisible by the cusp form $\chi_5 \in S_{0,5}(\Gamma_2,\epsilon)$.
We thus give generators for these spaces $S_{j,2}(\Gamma_2[2])^{s[1^6]}$ for $j\leq 30$
and we can calculate Hecke eigenvalues for these.
For $j>30$ this becomes quite laborious. 

\smallskip

For all irreducible representations 
we have the following vanishing result:
\begin{proposition}
For any $\varpi$ we have $\dim S_{j,2}(\Gamma_2[2])^{s[\varpi]}=0$ for $j<12$.
\end{proposition}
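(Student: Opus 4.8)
The plan is to prove the equivalent statement that the whole space $S_{j,2}(\Gamma_2[2])$ vanishes for $j<12$, since $S_{j,2}(\Gamma_2[2])=\bigoplus_{\varpi} S_{j,2}(\Gamma_2[2])^{s[\varpi]}$ and the total space vanishes if and only if every isotypical component does. First I would dispose of the odd weights: the matrix $-\mathbf{1}\in\Gamma_2[2]$ has automorphy factor $\mathrm{Sym}^j(-I_2)\det(-I_2)^2=(-1)^j$, so every $f\in S_{j,2}(\Gamma_2[2])$ satisfies $f=(-1)^jf$ and the space is zero for odd $j$. It then remains to treat the finitely many even values $j\in\{0,2,4,6,8,10\}$.

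For these I would exploit the dictionary between vector-valued Siegel modular forms of degree $2$ and covariants of binary sextics of \cite{C-F-vdG}. Under the Torelli map the moduli of six labelled points on the line (binary sextics with ordered roots) maps to the genus-two locus in $\mathcal A_2[2]=\Gamma_2[2]\backslash\mathbb{H}_2$, the Hodge bundle $\mathbb E$ pulls back to the rank-two bundle spanned by $dx/y$ and $x\,dx/y$, and a covariant of order $j$ and degree $d$ yields a (possibly meromorphic) modular form of weight $(j,k)$ with $k=d-j/2$ (consistent with the discriminant, $d=10$, $j=0$, giving $\chi_{10}$ of weight $(0,10)$). Since the ordering of the roots corresponds to the $\mathfrak S_6$-action on the level structure, this correspondence is $\mathfrak S_6$-equivariant, and it identifies $S_{j,2}(\Gamma_2[2])$ with the space of cuspidal covariants of order $j$ and degree $d=2+j/2$, which for $j\le 10$ means $d\le 7$. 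Thus it suffices to show that no nonzero covariant of degree $d\le 7$ represents a cusp form, uniformly in all $\mathfrak S_6$-types.

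The last step carries the real content, and the boundary analysis is the hard part. Cuspidality means vanishing along the boundary divisor of a toroidal compactification of $\mathcal A_2[2]$, i.e. along the locus where the labelled points collide so that the Jacobian degenerates to a semi-abelian surface; I would translate this into membership in the boundary ideal of the covariant ring and then check that this ideal contains no nonzero element of degree $\le 7$ in the orders $j\le 10$, by enumerating the finitely many covariants of the binary sextic of degree at most $7$ (a sublist of the classical Clebsch/von Gall generators together with their products). The subtlety is that one cannot simply invoke divisibility by the discriminant (degree $10$): the discriminant locus is strictly larger than the cuspidal vanishing locus, since it also contains the interior locus of products of elliptic curves, and it is exactly the separation of the genuine boundary contribution from this reducible locus that pins the threshold at degree $8$, i.e. at $j=12$. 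As a consistency check the two extreme components are already forced to vanish by the results quoted above: $S_{j,2}(\Gamma_2[2])^{s[6]}=S_{j,2}(\Gamma_2)$ vanishes for $j\le 52$ by Theorem~\ref{vanishingSj2level1}, and $S_{j,2}(\Gamma_2[2])^{s[1^6]}=S_{j,2}(\Gamma_2,\epsilon)$ has Hilbert series $t^{12}/(1-t^6)(1-t^8)(1-t^{12})$ and hence also vanishes for $j<12$.
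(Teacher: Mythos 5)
Your proposal has a genuine gap: the step that carries all the content is announced but never carried out. For even $j\le 10$ you reduce everything to the claim that no nonzero covariant of degree $d\le 7$ ``represents a cusp form,'' and then you say you \emph{would} translate cuspidality into membership in a boundary ideal and \emph{would} check that this ideal contains no nonzero element of degree $\le 7$. That check is exactly the proposition in disguise, and it is not performed. Moreover, the framework you invoke for it is not available as stated. The covariant dictionary of \cite{C-F-vdG} used in this paper relates covariants of the binary sextic to forms on $\Gamma_2$ (possibly with the character $\epsilon$), i.e.\ to the $s[6]$- and $s[1^6]$-isotypical pieces only; an $\mathfrak{S}_6$-equivariant version capturing all of $S_{j,2}(\Gamma_2[2])$ requires covariants of the sextic \emph{with ordered roots}, a much larger object than the ring $\mathcal{C}$ with its $26$ Clebsch--Gordan generators. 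So ``enumerating the finitely many covariants of degree at most $7$'' from the classical list cannot see the components $s[\varpi]$ other than $s[6]$ and $s[1^6]$, which are precisely the ones at issue. There is also a smaller imprecision: holomorphic forms of weight $(j,2)$ correspond to covariants of degree $\le j/2+2$ only up to division by powers of $\chi_5$, so the clean identification with ``cuspidal covariants of degree exactly $2+j/2$'' is not justified.

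For comparison, the paper's proof avoids covariants entirely and is short: take $f\in S_{j,2}(\Gamma_2[2])$ nonzero, defined over $\QQ$; by the $q$-expansion principle (applied to $f\chi_{10}$) all $\sigma(f)$, $\sigma\in\mathfrak{S}_6$, have real Fourier coefficients, so looking at the first nonzero coefficient shows that
$$
\sum_{\sigma\in\mathfrak{S}_6}\sigma(f)^2
$$
is a nonzero, $\mathfrak{S}_6$-invariant form, hence a nonzero element of $S_{2j,4}(\Gamma_2)$. Since $k=4\ge 3$, Tsushima's dimension formula applies and gives $S_{2j,4}(\Gamma_2)=(0)$ for $j<12$, a contradiction. (The paper also gives a separate geometric argument for $j\le 8$ via restriction to the ten components of the Humbert surface $H_1$ and divisibility by $\chi_5$, which is closer in spirit to your boundary analysis but works with the Humbert locus rather than the cusps.) If you want to salvage your approach, you would need either the ordered-root covariant theory at level $2$, or to replace your step 3 by an argument of this kind.
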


We end with some remarks on other `small' weights.
The vanishing of $S_{j,1}(\Gamma_2)$ follows from work of Skoruppa 
\cite{Sk}.
In an appendix to this paper besides providing 
a different proof of the vanishing of $S_{j,2}(\Gamma_2)$
for $j\leq 38$, Chenevier gives a proof of the 
vanishing of $S_{j,1}(\Gamma_2[2])$.  For $k=3$ one knows that $S_{j,3}(\Gamma_2)=(0)$
for $j<36$. But $S_{36,3}$ is $1$-dimensional and using covariants
we can construct a form of this weight in a relatively easy manner;
cf.\ the remarks in \cite[p.\ 207]{I-W} on the difficulty of constructing
such a form.
\medskip

\noindent
{\bf Acknowledgement.}
We thank Ga\"etan Chenevier warmly 
for asking the question on the existence of modular
forms of weight $(j,2)$ on the full group ${\rm Sp}(4,{\ZZ})$ and
for agreeing to add his results in the form of an appendix.
He also pointed out an error in an earlier version of our conjecture and
provided a proof of the extension of R\"osner's result on Yoshida lifts.
We thank Mirko R\"osner for correspondence.
We also thank the Max-Planck Institut f\"ur Mathematik 
in Bonn for excellent working conditions.
\end{section}
\begin{section}{Modular Forms of Degree Two}\label{MFDeg2}
For the definitions of Siegel modular forms and elementary properties we refer to
\cite{vdG1}.
We denote the Siegel upper half space of degree $g$ by
$\mathfrak{H}_g$. The Siegel 
modular group $\Gamma_g={\rm Sp}(2g,{\ZZ})$ 
acts on $\mathfrak{H}_g$ by fractional linear transformations
$$
\tau \mapsto (a\tau +b)(c\tau +d)^{-1} \qquad 
\text{\rm for $\left(\begin{matrix} 
a & b\cr c & d \cr\end{matrix} \right) \in {\rm Sp}(2g,{\ZZ})$
and $\tau \in \mathfrak{H}_g$.}
$$
If $\rho: {\rm GL}(g,{\CC}) \to {\rm GL}(W)$ is a finite-dimensional complex representation
then a holomorphic map $f: \mathfrak{H}_g \to W$ is called a Siegel modular form of weight $\rho$
if $f(a\tau +b)(c\tau +d)^{-1})=\rho(c\tau+d)f(\tau)$ 
for all $(a,b;c,d) \in \Gamma_g$.
The space of modular forms of weight $\rho$ is finite-dimensional and denoted by
$M_{\rho}(\Gamma_g)$.

If $g=2$ then an irreducible representation of ${\rm GL}(2,{\CC})$ is of the form
${\rm Sym}^j({\rm St}) \otimes {\det}({\rm St})^k$  with ${\rm St}$ 
the standard representation of ${\rm GL}(2,{\CC})$ 
for some $j \in {\ZZ}_{\geq 0}$ and $k \in {\ZZ}$. 

For $\rho={\rm Sym}^j({\rm St}) \otimes {\det}({\rm St})^k$ we denote $M_{\rho}$ by $M_{j,k}$
and we call $(j,k)$ the weight.
If $j=0$ we are dealing with scalar-valued modular forms. 
The space of Siegel modular forms of degree $2$ and weight $(j,k)$ 
is denoted by $M_{j,k}(\Gamma_2)$.

There is the Siegel operator $\Phi_g$ that maps Siegel modular forms of degree $g$ to Siegel modular forms of degree
$g-1$. The kernel of $\Phi_2$ in $M_{j,k}(\Gamma_2)$ is called the space of 
cusp forms of weight $(j,k)$ and denoted by $S_{j,k}(\Gamma_2)$.
Note that for $k=2$ we have $M_{j,2}(\Gamma_2)=S_{j,2}(\Gamma_2)$, see \cite[Lemma 2.1]{I}.

For a finite index subgroup $\Gamma$ of ${\rm Sp}(4,{\ZZ})$ we have similar notions. Here we
deal with the groups $\Gamma_2$ and $\Gamma_2[2]$. 
The quotient group $\Gamma_2/\Gamma_2[2]
\cong {\rm Sp}(4,{\ZZ}/2{\ZZ})$ 
is identified with the symmetric group $\mathfrak{S}_6$ as in the 
Introduction. This group acts in a natural
way on the space of cusp forms $S_{j,k}(\Gamma_2[2])$ and we can decompose this space in
isotypical components $S_{j,k}(\Gamma_2[2])^{s[\varpi]}$ corresponding to the irreducible
representations $s[\varpi]$ of $\mathfrak{S}_6$ which in turn correspond bijectively to the
partitions $\varpi$ of $6$. 

The ring $R$ 
of scalar-valued Siegel modular forms on $\Gamma_2$ was determined by Igusa
in the 1960s, see \cite{Igusa}. In the 1980s Tsushima gave in \cite{Tsushima}
formulas for the dimensions of the spaces of 
vector-valued  cusp forms on a subgroup between  $\Gamma_2[2]$ and $\Gamma_2$. 
Bergstr\"om extended this
to $\Gamma_2[2]$ with the action of $\mathfrak{S}_6$, see \cite{website}. 
We thus know the dimension
of $S_{j,k}(\Gamma_2[2])^{s[\varpi]}$ for all $j$ and $k\geq 3$.

The vector-valued modular forms of degree $2$ form a ring 
$M=\oplus_{j,k}  M_{j,k}(\Gamma_2)$. It is also a 
module over the ring $R$. For level $2$ similar things hold.

A vector-valued Siegel modular form $f$ of weight $(j,k)$ on $\Gamma_2$
has a Fourier-Jacobi expansion
$$
f(\tau)= \sum_{m\geq 0} \varphi_m(\tau_1,z) \, e^{2\pi i m \tau_2}
\qquad 
\text{ where $\tau= \left( \begin{matrix} \tau_1 & z \\
z & \tau_2 \\ \end{matrix} \right)$ }
$$
with $\varphi_m: \mathfrak{H}_1 \times {\CC} \to {\rm Sym}^j({\CC}^2)$
a holomorphic map that satisfies certain functional equations under the
action of the so-called Jacobi group ${\rm SL}(2,{\ZZ}) \ltimes {\ZZ}^2$.
This group is embedded in $\Gamma_2$ via
$$
\left( \begin{matrix} a & b \\ c & d \\ \end{matrix}\right) \mapsto 
\left( \begin{matrix} a & 0 & b & 0 \\
0 & 1 & 0 & 0 \\
c & 0 & d & 0 \\
0 & 0 & 0 & 1 \\ \end{matrix} \right)
, \qquad
(\lambda, \mu) \mapsto 
\left( \begin{matrix} 1 & 0 & 0 & \mu \\
\lambda  & 1 & \mu & 0 \\
0 & 0 & 1 & -\lambda \\
0 & 0 & 0 & 1 \\ \end{matrix} \right)
$$
with action
$$
\tau \mapsto \left( \begin{matrix} (a\tau_1+b)/(c\tau_1+d) & z/(c\tau_1+d) \\
z/(c\tau_1+d) & \tau_2-cz^2/(c\tau_1+d)\\ \end{matrix} \right)
$$
and
$$
\tau \mapsto \left( \begin{matrix} \tau_1 & z+\lambda \tau_1+\mu \\
z+\lambda \tau_1+\mu & \tau_2+ \lambda^2\tau_1+2\lambda z +\lambda \mu \\ \end{matrix} \right) \, .
$$
The fact that $f$ is a modular form of weight $(j,k)$ implies the corresponding functional
equations
$$
\varphi_m(\frac{a \tau_1+b}{c\tau_1+d}, \frac{z}{c\tau_1+d}) 
e^{-2\pi i m \frac{c z^2}{c\tau_1+d}}= {\rm Sym}^j
\left( \begin{matrix} c\tau_1+d & c z \\ 0 & 1 \\ \end{matrix} \right) (c\tau_1+d)^k 
\varphi_m(\tau_1,z)
$$
and
$$
\varphi_m(\tau_1,z+\lambda \tau_1+\mu) e^{2\pi i m (\lambda^2 \tau_1 +2\lambda z +\lambda \mu)} =
{\rm Sym}^j \left( \begin{matrix} 1 & -\lambda \\ 0 & 1 \\ \end{matrix} \right) \varphi_m(\tau_1,z) \, ,
$$
where we write $\varphi_m$ as the transpose of the row vector
$(\varphi_m^{(0)}, \ldots, \varphi_m^{(j)})$.
\begin{corollary}
If $f \in M_{j,k}(\Gamma_2)$ (resp.\ $f \in S_{j,k}(\Gamma_2)$) then
the last coordinate $\varphi_m^{(j)}$ of the coefficient $\varphi_m$ of $e^{2\pi i m \tau_2}$
in the Fourier-Jacobi expansion  of $f$
is a Jacobi form (resp.\ Jacobi cusp form) of weight $k$ and index $m$.
\end{corollary}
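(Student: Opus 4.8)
The plan is to extract, from each of the two displayed functional equations for $\varphi_m$, the transformation law satisfied by its last component $\varphi_m^{(j)}$, and then to match these with the defining properties of a Jacobi form of weight $k$ and index $m$. Recall that such a Jacobi form is a holomorphic $\phi\colon\mathfrak{H}_1\times\CC\to\CC$ obeying $\phi\bigl(\tfrac{a\tau_1+b}{c\tau_1+d},\tfrac{z}{c\tau_1+d}\bigr)=(c\tau_1+d)^k e^{2\pi i m cz^2/(c\tau_1+d)}\phi(\tau_1,z)$ together with $\phi(\tau_1,z+\lambda\tau_1+\mu)=e^{-2\pi i m(\lambda^2\tau_1+2\lambda z)}\phi(\tau_1,z)$, and whose Fourier coefficients $c(n,r)$ vanish unless $4mn\geq r^2$ (respectively $4mn>r^2$ for a cusp form).

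The key computation is the shape of ${\rm Sym}^j$ applied to the upper-triangular matrix $\left(\begin{smallmatrix}\alpha&\beta\\0&1\end{smallmatrix}\right)$ occurring in both functional equations. Using the monomial basis $w_i=e_1^{\,j-i}e_2^{\,i}$ for $0\leq i\leq j$ of ${\rm Sym}^j(\CC^2)$, so that $\varphi_m=\sum_i\varphi_m^{(i)}w_i$, I would verify $g\cdot w_i=\alpha^{j-i}\sum_{s=0}^i\binom{i}{s}\beta^{\,i-s}w_s$; hence the representing matrix is upper triangular, and its bottom-right entry (the coefficient of $w_j=e_2^{\,j}$ in $g\cdot w_j$) equals $\alpha^{0}=1$. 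Consequently the last component of ${\rm Sym}^j(g)\,\varphi_m$ is simply $\varphi_m^{(j)}$, so the $j$-th coordinate of each functional equation decouples from all the others.

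Reading off the bottom coordinate of the first functional equation (with $\alpha=c\tau_1+d$, $\beta=cz$) then yields, after transferring the exponential factor, exactly the ${\rm SL}(2,\ZZ)$-transformation of weight $k$ and index $m$, while the bottom coordinate of the second (with $\alpha=1$, $\beta=-\lambda$) yields the elliptic transformation of index $m$, where one uses $m,\lambda,\mu\in\ZZ$ so that $e^{2\pi i m\lambda\mu}=1$. Holomorphy of $\varphi_m^{(j)}$ is inherited from that of $f$. Finally, the Koecher/Fourier expansion $f=\sum_{N\geq 0}a(N)e^{2\pi i\,{\rm Tr}(N\tau)}$ over positive semi-definite half-integral symmetric $N=\left(\begin{smallmatrix}n&r/2\\r/2&m\end{smallmatrix}\right)$ shows that the Fourier coefficients of $\varphi_m^{(j)}$ are supported on $(n,r)$ with $n\geq0$ and $4mn-r^2=4\det N\geq0$, which is the growth condition for a Jacobi form; if $f$ is a cusp form then $N$ is positive definite and $4mn-r^2>0$, giving a Jacobi cusp form.

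The only genuinely delicate point is the bookkeeping in the middle step: one must fix the indexing so that it is the \emph{last} coordinate, attached to the lowest-weight basis vector $e_2^{\,j}$, whose diagonal factor is $\alpha^{0}=1$ (the first coordinate would instead pick up $\alpha^{\,j}$ and hence weight $k+j$). Once the upper-triangularity and the value $1$ of the bottom-right diagonal entry are pinned down, the rest is routine verification.
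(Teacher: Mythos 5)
Your proposal is correct and is precisely the argument the paper leaves implicit: the corollary is stated as an immediate consequence of the two displayed functional equations, the point being that ${\rm Sym}^j$ of $\left(\begin{smallmatrix}\alpha&\beta\\0&1\end{smallmatrix}\right)$ is triangular with last row $(0,\dots,0,1)$, so the $j$-th coordinate decouples and satisfies exactly the weight-$k$, index-$m$ Jacobi transformation laws, with the support/cusp condition read off from the Fourier expansion of $f$. Nothing in your write-up deviates from, or adds a gap to, what the paper intends.
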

We note that  Jacobi cusp forms of weight $2$ and index $m$ are zero for $m< 37$,
see \cite[pp.\ 117-120]{E-Z}. This imposes strong conditions on forms of weight
$(j,2)$ on $\Gamma_2$.

\medskip
Since we shall compute the action of Hecke operators later
we now describe formulas for the action of 
Hecke operators on forms  on $\Gamma_2$. For forms without
character we refer to \cite[Appendix]{C-vdG}, so we deal with the case of
forms with a character. For $f \in M_{j,k}(\Gamma_2,\epsilon)$
we write its Fourier expansion as
$$
f(\tau)= \sum_{n\geq 0} a(n) \, e^{\pi i {\rm Tr}(n\tau)} \, ,
$$
where $n$ runs over the positive semi-definite half-integral symmetric matrices.
We will write $[n_1,n_2,n_3]$ for $\left( \begin{smallmatrix}
n_1 & n_2/2 \\ n_2/2 & n_3 \end{smallmatrix} \right)$.
For an odd prime $p$ we denote by  $T_p$ the Hecke operator for $\Gamma_2$ 
at $p$. Then we write the transform of $f$ under $T_p$ as
$$
T_p(f)(\tau)= \sum_{n\geq 0} a_p(n) \, e^{\pi i {\rm Tr} (n\tau)}\, .
$$
Here for $p \not\equiv 1 \bmod\,  3$ the coefficient $a_p([1,1,1])$ is
given by  $a([p,p,p])$, 
and for $p=3$ by
$$
a([3,3,3])-3^{k-2} 
\Sy^j
 \left(
\begin{smallmatrix}
3 & -1  \\
0 & 1 \\
\end{smallmatrix}
\right) 
a([1,3,3])  \, ,
$$
while for $p \equiv 1 \bmod \, 3$ by
$$
a([p,p,p]) 
+
p^{k-2} 
\sum_{i=1}^{2}
(-1)^{m_i} 
\Sy^j
 \left(
\begin{smallmatrix}
p & -m_i  \\
0 & 1 \\
\end{smallmatrix}
\right) 
a([\frac{1+m_i+m_i^2}{p},1+2m_i, p])  \, ,
$$
where in the latter case $m_1$ and $m_2$ are 
the two roots of the polynomial $1+X+X^2$ over $\FF_p$, which we view  
here 
as the set  $\left\{0,\ldots,p-1\right\}$.

Similarly, the coefficient $a_{p^2}([1,1,1])$ of the transform of $f$ 
under the Hecke operator $T_{p^2}$ is given for $p \not\equiv 1 \bmod\, 3$
by $a([p^2,p^2,p^2])$, and for $p=3$ by 
$$
a([9,9,9])-3^{k-2} 
\Sy^j
 \left(
\begin{smallmatrix}
3 & -1  \\
0 & 1 \\
\end{smallmatrix}
\right) 
a([3,9,9]) \, .
$$

As an example, consider the modular form
$$
\chi_5 \in S_{0,5}(\Gamma_2,\epsilon)\,,
$$ 
the product of the ten even theta characteristics 
and the square root of Igusa's cusp form $\chi_{10}$, 
that will play an important role
in this paper. It provides a check on these formulas for the Hecke
operators.  Indeed, one knows
$$
\lambda_p(\chi_5)=p^3+a_p(f)+p^4, \qquad 
\lambda_{p^2}(\chi_5)=\lambda_p(\chi_5)^2-(p^4+p^3)\lambda_p(\chi_5)+p^{8}\, ,
$$
where $f=q-8\, q^2+12\, q^3+64\, q^4-210\, q^5 +\cdots $ is the 
normalized Hecke eigenform in $S_{8}^+(\Gamma_0(2))^{\text{new}}$, which
illustrates that $\chi_5$ is a Saito-Kurokawa lift. One can check
that the above formulas agree
with this.

\end{section}
\begin{section}{Restriction to the diagonal}
In order to put restrictions on the existence of Siegel modular forms 
we restrict these to the `diagonal' given by the embedding
$$
i: \mathfrak{H}_1\times \mathfrak{H}_1 \to \mathfrak{H}_2,\qquad
(z_1,z_2) \mapsto \left(\begin{matrix}z_1 & 0 \cr 0 & z_2\cr\end{matrix}\right)\, .
$$
The stabilizer of $i(\mathfrak{H}_1\times \mathfrak{H}_1)$ 
in ${\rm Sp}(4,{\RR})$ is an extension by ${\ZZ}/2{\ZZ}$ of the image of 
${\rm SL}(2,{\RR})^2$ under the embedding
$$
(\left(
\begin{smallmatrix}
a_1 & b_1 \\ c_1 & d_1
\end{smallmatrix}
\right)
,
\left(
\begin{smallmatrix}
a_2 & b_2 \\ c_2 & d_2
\end{smallmatrix}
\right)
) \, \mapsto \, 
\left(
\begin{smallmatrix}
a_1 & 0 & b_1 & 0\\
0 & a_2 & 0 & b_2 \\
c_1 & 0 & d_1 & 0\\
0 & c_2 & 0 & d_2 \\
\end{smallmatrix}
\right)
$$
The extension by ${\ZZ}/2{\ZZ}$ 
corresponds to the involution that interchanges 
$\tau_1$ and $\tau_2$ 
in 
$$
\tau =\left(\begin{smallmatrix}
\tau_1 & \tau_{12}\cr \tau_{12} & \tau_2 \cr \end{smallmatrix} \right) \in \mathfrak{H}_2
$$ 
(and  $z_1$ and $z_2$ on $\mathfrak{H}_1^2$). This corresponds to the element 
$\iota=\left(\begin{smallmatrix} a & 0 \\ 0 & d \\ \end{smallmatrix}\right)$ in
$\Gamma_2$ with 
$a=d= \left(\begin{smallmatrix} 0 & 1 \\ 1 & 0 \\ \end{smallmatrix}\right)$.
The stabilizer inside $\Gamma_2$ (resp.\ inside $\Gamma_2[2]$) is an
extension by ${\ZZ}/2{\ZZ}$  of ${\rm SL}(2,{\ZZ})\times {\rm SL}(2,{\ZZ})$ 
(resp.\ of $\Gamma_1[2]\times \Gamma_1[2]$).

If $F=(F_0,\ldots,F_{j})^t$ is a Siegel modular form
of weight $(j,k)$ of level $2$, then its pullback
under $i$
to $\mathfrak{H}_1\times \mathfrak{H}_1$ 
gives rise to an element of $(f_0,\ldots, f_j)^t$
with $f_l \in M_{j+k-l}(\Gamma_1[2])\otimes M_{k+l}(\Gamma_1[2])$.

By restricting a cusp form of level $1$  
we get cusp forms of level $1$.
The action of  $\iota$ is given by a map
$$
S_{j+k-i}(\Gamma_1[2])\otimes S_{k+i}(\Gamma_1[2]) \to
S_{k+i}(\Gamma_1[2])\otimes S_{j+k-i}(\Gamma_1[2]), \qquad
a\otimes b \mapsto (-1)^k b\otimes a
$$ 
for $\Gamma_1[2]$ and a similar one for $\Gamma_1$. 
So for a form of level $1$ without character 
we loose no information by looking at
$$
\bigoplus_{i=0}^{j/2-1} S_{j+k-i}(\Gamma_1)\otimes S_{k+i}(\Gamma_1) 
 \bigoplus \begin{cases}
{\wedge}^2 S_{j/2+k}(\Gamma_1) & \text{for $k$ odd} \\
{\rm Sym}^2 S_{j/2+k}(\Gamma_1) & \text{for $k$ even.} \\
\end{cases} 
$$

By multiplying with $\chi_5$ we get an injective map 
 $S_{j,2}(\Gamma_2,\epsilon) \to S_{j,7}(\Gamma_2)$. 
The generating series for the dimensions is
$$
\sum_{j=2}^{\infty} \dim S_{j,7}(\Gamma_2) \, t^j =
{t^{12} \over (1-t)(1-t^3)(1-t^4)(1-t^6)} \, .
$$
We observe that our conjecture on $S_{j,2}(\Gamma_2,\epsilon)$ implies that 
$$
\dim S_{j,7}(\Gamma_2) -\dim S_{j,2}(\Gamma_2,\epsilon) =
\sum_{i=0}^{j/2-1} \dim S_{j+7-i}(\Gamma_1) \dim S_{7+i}(\Gamma_1)
+ \dim \wedge^2 S_{j/2+7}(\Gamma_1) \, ,
$$
or equivalently, that the restriction $\rho$ to the diagonal
fits in  an exact sequence
$$
0 \to S_{j,2}(\Gamma_2,\epsilon) {\buildrel \cdot \chi_5 \over\longrightarrow} 
S_{j,7}(\Gamma_2)
{\buildrel \rho \over \longrightarrow} \oplus_{i=0}^{j/2-1}S_{j+7-i}(\Gamma_1)\otimes S_{7+i}(\Gamma_1)
\oplus \wedge^2 S_{j/2+7}(\Gamma_1)
\to 0 \, .
$$

\medskip
If $F\in S_{j,k}(\Gamma_2,\epsilon)$ 
 we find by using $\iota$ that
we can restrict to
$$
\bigoplus_{i=0}^{j/2-1} S_{j+k-i}(\Gamma_1[2])^{s[1^3]}
\otimes S_{k+i}(\Gamma_1[2])^{s[1^3]} 
\bigoplus
{\rm Sym}^2(S_{j/2+k}(\Gamma_1[2])^{s[1^3]})\, .
$$
Indeed, the group $\mathfrak{S}_3={\rm SL}(2,{\ZZ}/2{\ZZ})$ acts on $S_k(\Gamma_1[2])$
and  for  a form on $\Gamma_2$ with a character
the components $f_i, f_i'$ of the
restriction to $i(\mathfrak{H}_1\times \mathfrak{H}_1)$ are modular forms
on $\Gamma_1[2]$ with a character, i.e.\ they lie in the $s[1^3]$-isotypical
part of $S_k(\Gamma_1[2])$. The module $\oplus_k S_k(\Gamma_1[2])^{s[1^3]}$
is a module over the ring ${\CC}[e_4,e_6]$ 
of modular forms on $\Gamma_1$ and is generated
by the cusp form $\delta=\eta^{12}$, a square root of 
$\Delta\in S_{12}(\Gamma_1)$.

The generating series for the dimensions is now
$$
\sum_{j=2}^{\infty} \dim S_{j,7}(\Gamma_2,\epsilon)\, t^j =
{t^6 \over (1-t)(1-t^3)(1-t^4)(1-t^6)}\, .
$$
Conjecturally we now find an exact sequence
$$
\begin{aligned}
0 \to S_{j,7}(\Gamma_2,\epsilon) {\buildrel \rho \over \longrightarrow} 
\oplus_{i=0}^{j/2-1} S_{j+7-i}(\Gamma_0(4))^{n} 
\otimes S_{7+i}(\Gamma_0(4))^{n}  \, 
\oplus {\rm Sym}^2 (S_{j/2+7}(\Gamma_0(4))^{n}) &\\
 \to K \to 0\, , &\\
\end{aligned}
$$
where $S_k(\Gamma_0(4))^{n}=S_k(\Gamma_0(4))^{\rm new}$  and
the dimension of the cokernel $K$ is now predicted by minus 
the extrapolation to $k=2$ of the algorithms used in \cite{B-F-vdG1} to 
calculate the dimension of $S_{j,k}(\Gamma_2)$ and which give negative numbers
here. 
\end{section}
\begin{section}{Constructing Modular Forms Using Covariants}\label{covariants}
In the paper \cite{C-F-vdG} we explained how to use invariant theory to construct Siegel modular
forms. In this paper we shall make extensive use of the procedure.

Let $V$ be the standard representation space of ${\rm GL}(2,{\CC})$
with basis $x_1,x_2$. We consider the space ${\rm Sym}^6(V)$ of
binary sextics, where we write an element as
$$
f= \sum_{i=0}^6 a_i \binom{6}{i} x_1^{6-i}x_2^i \, .
$$
Sometimes we call this expression the universal binary sextic.
For a description of invariants and covariants for the
action of ${\rm GL}(2,{\CC})$ we refer to  \cite[Section 3]{C-F-vdG}.
An invariant can be viewed as a polynomial in the coefficients $a_i$
that is invariant under the action of ${\rm SL}(2,{\CC})$, while a
 covariant of degree $(a,b)$ can be viewed as a form of degree $a$
in the $a_i$ and degree $b$ in $x_1,x_2$. If $A[\lambda_1,\lambda_2]$
is an irreducible representation of highest weight $(\lambda_1 \geq \lambda_2)$
of ${\rm GL}(2,{\CC})$ embedded equivariantly
in ${\rm Sym}^d({\rm Sym}^6(V))$ this defines a covariant of degree 
$(d,\lambda_1-\lambda_2)$ and it is unique up to a multiplicative non-zero 
constant. 

We denote the ring of covariants by ${\mathcal C}$. 
Clebsch and others constructed in the
19th century generators for this ring. There are $26$ generators,
$5$ invariants and $21$ covariants, satisfying many relations. 
They can be found in the book of
Grace and Young \cite[p.\ 156]{G-Y}. For the convenience of the reader we
reproduce these here. In the following table $C_{a,b}$
denotes a generator of degree $(a,b)$.

\begin{footnotesize}
\smallskip
\vbox{
\bigskip\centerline{\def\quad{\hskip 0.6em\relax}
\def\quod{\hskip 0.5em\relax }
\vbox{\offinterlineskip
\hrule
\halign{&\vrule#&\strut\quod\hfil#\quad\cr
height2pt&\omit &&\omit &&\omit &&\omit &&\omit &&\omit &&\omit &&\omit &\cr
&$a \backslash b$&&
$0$  && $2$ && $4$ && $6$ && $8$ && $10$ && $12$ &\cr
\noalign{\hrule}
& 1 &&   &&  &&  && $C_{1,6}$ &&  &&  &&  &\cr
\noalign{\hrule}
& 2 && $C_{2,0}$  &&  && $C_{2,4}$ &&  && $C_{2,8}$ &&  &&  &\cr
\noalign{\hrule}
& 3 &&   && $C_{3,2}$ &&  && $C_{3,6}$ && $C_{3,8}$ &&  && $C_{3,12}$ &\cr
\noalign{\hrule}
& 4 && $C_{4,0}$  &&  && $C_{4,4}$ && $C_{4,6}$ &&  && $C_{4,10}$ &&  &\cr
\noalign{\hrule}
& 5 &&   && $C_{5,2}$ && $C_{5,4}$ &&  && $C_{5,8}$ &&  &&  &\cr
\noalign{\hrule}
& 6 && $C_{6,0}$  &&  &&  && $C_{6,6}^{(1)}$ &&  &&  &&  &\cr
&  &&   &&  &&  && $C_{6,6}^{(2)}$ &&  &&  &&  &\cr
\noalign{\hrule}
& 7 &&   && $C_{7,2}$ && $C_{7,4}$ &&  &&  &&  &&  &\cr
\noalign{\hrule}
& 8 &&   && $C_{8,2}$ &&  &&  &&  &&  &&  &\cr
\noalign{\hrule}
& 9 &&   &&  && $C_{9,4}$ &&  &&  &&  &&  &\cr
\noalign{\hrule}
& 10 && $C_{10,0}$  && $C_{10,2}$ &&  &&  &&  &&  &&  &\cr
\noalign{\hrule}
& 12 &&   && $C_{12,2}$ &&  &&  &&  &&  &&  &\cr
\noalign{\hrule}
& 15 && $C_{15,0}$  &&  &&  &&  &&  &&  &&  &\cr
} \hrule}
}}
\end{footnotesize}

A theorem of Gordan says that all
these covariants can be constructed explicitly by using so-called
transvectants from the universal binary sextic. 
If  ${\Sym}^m(V)$ denotes 
the space of binary quantics of degree $m$ then we define the $k$th
transvectant as follows. It is a map 
${\Sym}^m(V) \times {\Sym}^n(V) \to {\Sym}^{m+n-2k}(V)$
that sends a pair $(f,g)$ to
$$
(f,g)_k=\frac{(m-k)!(n-k)!}{m!n!}\sum_{j=0}^k (-1)^j
\binom{k}{j}
\frac{\partial ^k f}{\partial x_1^{k-j}\partial x_2^j}
\frac{\partial ^k g}{\partial x_1^{j}\partial x_2^{k-j}}\, .
$$
When $k=1$, we omit the index: $(f,g)=(f,g)_1$. 
The next table gives the construction
of the covariants in the preceding table.

\begin{footnotesize}
\smallskip
\vbox{
\bigskip\centerline{\def\quad{\hskip 0.6em\relax}
\def\quod{\hskip 0.5em\relax }
\vbox{\offinterlineskip
\hrule
\halign{&\vrule#&\strut\quod\hfil#\quad\cr
height2pt&\omit &&\omit &&\omit &&\omit &&\omit &\cr
& 1 && $C_{1,6}=f$  && && && &\cr
\noalign{\hrule}
& 2  && $C_{2,0}=(f,f)_6$ && $C_{2,4}=(f,f)_4$  && $C_{2,8}=(f,f)_2$ && &\cr
\noalign{\hrule}
& 3  && $C_{3,2}=(C_{1,6},C_{2,4})_4$ && $C_{3,6}=(f,C_{2,4})_2$ && $C_{3,8}=(f,C_{2,4})$ && $C_{3,12}=(f,C_{2,8})$ &\cr
\noalign{\hrule}
& 4  && $C_{4,0}=(C_{2,4},C_{2,4})_4$ && $C_{4,4}=(f,C_{3,2})_2$ &&  $C_{4,6}=(f,C_{3,2})$ && $C_{4,10}=(C_{2,8},C_{2,4})$ &\cr
\noalign{\hrule}
& 5  && $C_{5,2}=(C_{2,4},C_{3,2})_2$ && $C_{5,4}=(C_{2,4},C_{3,2})$ && $C_{5,8}=(C_{2,8},C_{3,2})$ && &\cr
\noalign{\hrule}
& 6  && $C_{6,0}=(C_{3,2},C_{3,2})_2$ && $C_{6,6}^{(1)}=(C_{3,6},C_{3,2})$ && $C_{6,6}^{(2)}=(C_{3,8},C_{3,2})_2$ && &\cr
\noalign{\hrule}
& 7  && $C_{7,2}=(f,C_{3,2}^2)_4$ && $C_{7,4}=(f,C_{3,2}^2)_3$ && && &\cr
\noalign{\hrule}
& 8  && $C_{8,2}=(C_{2,4},C_{3,2}^2)_3$ && && && &\cr
\noalign{\hrule}
& 9  && $C_{9,4}=(C_{3,8},C_{3,2}^2)_4$ && && && &\cr
\noalign{\hrule}
& 10  && $C_{10,0}=(f,C_{3,2}^3)_6$ && $C_{10,2}=(f,C_{3,2}^3)_5$ && && &\cr
\noalign{\hrule}
& 12  && $C_{12,2}=(C_{3,8},C_{3,2}^3)_6$ && && && &\cr
\noalign{\hrule}
& 15  && $C_{15,0}=(C_{3,8},C_{3,2}^4)_8$ && && && &\cr
} \hrule}
}}
\end{footnotesize}

\bigskip

Let $M$ be the ring of vector-valued Siegel modular forms of degree $2$.
It is a module over the ring $R$ of scalar-valued Siegel modular forms of
degree $2$. In \cite{C-F-vdG} we defined maps
$$
M \longrightarrow {\mathcal C} {\buildrel \nu \over  \longrightarrow}
 M_{\chi_{10}}\, ,
$$
where $M_{\chi_{10}}$ is the localization of $M$ at $\chi_{10}$.
A modular form of weight $(j,k)$ maps to a covariant of degree $(j/2+k,j)$
and a covariant of degree $(a,b)$ is sent to a meromorphic modular form 
of weight $(b,a-b/2)$.
Under the map $\nu$ the universal binary sextic $f$ is mapped to 
$\chi_{6,3}/\chi_5$ of weight $(6,-2)$. 
Here $\chi_{6,3}$ is a holomorphic form in $S_{6,3}(\Gamma_2,\epsilon)$.
The beginning of its Fourier expansion is given in \cite{C-F-vdG}.

In practice  instead of $\nu$ often we use a slightly modified map 
$$
\mu: {\mathcal C} \longrightarrow M\oplus M_{\epsilon}\, ,
$$
where $M_{\epsilon}= \oplus M_{j,k}(\Gamma_2,\epsilon)$, is 
the $R$-module of modular
forms with a character. Under $\mu$
the universal sextic $f$ is mapped to $\chi_{6,3}$. 
Then a covariant maps of degree $(a,b)$ maps 
to a holomorphic Siegel modular form of weight $(b,6a-b/2)$ and character
$\epsilon^a$.

\begin{remark}\label{orderinfinity}
Since $\chi_{6,3}$ vanishes simply at infinity the definition of $\mu$ implies that
the image under $\mu$ of a covariant of degree $(a,b)$ vanishes at infinity with
order $\geq a$. Recall that the order of vanishing of $\chi_5$ at infinity is $1$. 
\end{remark}

For example, Igusa's generators $E_4,E_6,\chi_{10}, \chi_{12}$ and $\chi_{35}$
of $R$ 
are up to a non-zero multiplicative constant obtained as
$$
\begin{aligned}
&E_4=\mu(75\, C_{4,0} -8\, C_{2,0}^2)/\chi_{10}^2, \quad
E_6= \mu(224\, C_{2,0}^3 -1425\, C_{2,0}C_{4,0} -1125 \, C_{6,0})/\chi_{10}^3 \\
&\chi_{10}^6 = \mu(C_{\chi_{10}}), \quad
\chi_{12}=  \mu(C_{2,0}), \quad
\chi_{35}= \mu(C_{15,0})/\chi_5^{11}, \\
\end{aligned}
$$
with $C_{\chi_{10}}$, up to a multiplicative constant equal to the discriminant,
given by
$$
768\, C_{2,0}^5-7625 \, 
C_{4,0}C_{2,0}^3-1875\left(7 \, C_{6,0}C_{2,0}^2 -
10 \, C_{4,0}^2C_{2,0} 
-30  \, C_{6,0}C_{4,0}- 13860 \, C_{10,0}\right) \, . 
$$

\begin{remark}
The first scalar-valued cusp form on $\Gamma_2$ with character is of weight 
$30$ and can be obtained by dividing $\mu(C_{15,0})$ by $\chi_5^{12}$.
Note that we have 
$
M_{j,k}(\Gamma_2,\epsilon)=S_{j,k}(\Gamma_2,\epsilon).
$
(see \cite[p.\ 198]{I-W}). 
\end{remark}

\end{section}
\begin{section}{Cusp forms of weight $(j,2)$ on $\Gamma_2$ with a character}

Our conjecture says that $S_{j,2}(\Gamma_2,\epsilon)=(0)$ for $j<12$.
We begin by showing this.

\begin{lemma}
For $j=0, 2, 4, 6, 8$ and $10$, we have
$
S_{j,2}(\Gamma_2,\epsilon)=(0).
$
\end{lemma}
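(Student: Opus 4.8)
The plan is to reduce the vanishing to a statement about the scalar, level-one space $S_{j,7}(\Gamma_2)$, whose dimensions are already encoded in a generating series recalled in Section~3. First I would invoke the multiplication map built there: since $\epsilon^2=1$, multiplying by the cusp form $\chi_5\in S_{0,5}(\Gamma_2,\epsilon)$ sends a form of weight $(j,2)$ with character $\epsilon$ to a form of weight $(j,7)$ with trivial character, giving
$$
S_{j,2}(\Gamma_2,\epsilon)\hookrightarrow S_{j,7}(\Gamma_2),\qquad f\mapsto \chi_5\, f.
$$
Injectivity is immediate: $\chi_5$ is a nonzero holomorphic function on the connected domain $\mathfrak{H}_2$, and the ring of holomorphic Siegel modular forms is therefore an integral domain, so $\chi_5 f=0$ forces $f=0$. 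Consequently $\dim S_{j,2}(\Gamma_2,\epsilon)\le \dim S_{j,7}(\Gamma_2)$ for every $j$.

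Next I would read off the relevant dimensions from the generating series
$$
\sum_{j=2}^{\infty}\dim S_{j,7}(\Gamma_2)\,t^j=\frac{t^{12}}{(1-t)(1-t^3)(1-t^4)(1-t^6)}.
$$
Each factor $(1-t^a)^{-1}$ in the denominator expands as a power series with constant term $1$ and no negative exponents, so the right-hand side equals $t^{12}$ times a power series in $t$ with nonnegative exponents. Hence the lowest power of $t$ with nonzero coefficient is $t^{12}$, and $\dim S_{j,7}(\Gamma_2)=0$ for all $j<12$.

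Combining the two observations, for $j\in\{0,2,4,6,8,10\}$ we have $j<12$, so $\dim S_{j,7}(\Gamma_2)=0$, and the injection forces $S_{j,2}(\Gamma_2,\epsilon)=(0)$. There is essentially no obstacle once these two inputs are in place; the only points meriting a line of justification are the injectivity of multiplication by $\chi_5$ (the integral-domain property of the graded ring $M\oplus M_\epsilon$) and the elementary remark that the numerator $t^{12}$ fixes the onset of the generating series. I would also note that the argument in fact proves the stronger assertion $S_{j,2}(\Gamma_2,\epsilon)=(0)$ for \emph{all} $j<12$, in line with the conjecture, rather than merely for the six listed values; one could cross-check a few cases independently via the Fourier--Jacobi picture, using that the leading component $\varphi_m^{(j)}$ is a Jacobi cusp form of weight $2$, but this is not needed for the proof.
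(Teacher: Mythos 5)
Your proof is correct, but it follows a genuinely different route from the paper's. The paper's own proof never mentions $\chi_5$ or $S_{j,7}(\Gamma_2)$: it takes a hypothetical nonzero $f\in S_{j,2}(\Gamma_2,\epsilon)$, forms ${\rm Sym}^2(f)\in S_{2j,4}(\Gamma_2)$ (the quadratic character squares to the trivial one), invokes the known vanishing $\dim S_{2j,4}(\Gamma_2)=0$ for $j=0,2,\ldots,10$ (the first nonzero space at $k=4$ occurs only at $j=24$, cf.\ the table in the last section), and derives a contradiction from the fact that the holomorphic functions on $\mathfrak{H}_2$ form an integral domain. So both arguments reduce weight $(j,2)$ with character to a known dimension statement in weight $k\geq 3$ via the same integral-domain trick, but through different auxiliary spaces: you keep $j$ fixed and need the dimension data at weight $(j,7)$, while the paper doubles $j$ and needs it at weight $(2j,4)$. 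What your route buys: it gives the vanishing uniformly for all $j<12$, it is sharp (the injection lands in $S_{12,7}(\Gamma_2)\neq 0$ exactly when the first form $\chi_{12,2}$ appears), and it is precisely the mechanism the paper itself deploys afterwards (the map $S_{j,2}(\Gamma_2,\epsilon)\to S_{j,7}(\Gamma_2)$ in Section 3, and the $j=14$ case in Section 5). What the paper's route buys: it relies only on the intrinsic construction $f\mapsto{\rm Sym}^2(f)$, with no appeal to the auxiliary odd-weight form $\chi_5$. Two small points you should patch: the generating series in Section 3 is stated for $j\geq 2$, so the case $j=0$ of your argument needs the separate classical remark that $S_{0,7}(\Gamma_2)=0$ (odd-weight scalar forms on $\Gamma_2$ are multiples of Igusa's $\chi_{35}$); and your parenthetical Fourier--Jacobi cross-check would need care, since the corollary of Section 2 concerns forms without character --- though, as you note, it plays no role in the proof.
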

\begin{proof}
We know that $\dim S_{2j,4}(\Gamma_2)=0$ for $j=0,2,4,6,8,10$.
Assume that for one of these values of $j$ 
there is a non-zero element $f\in S_{j,2}(\Gamma_2,\epsilon)$.
Then ${\Sym}^2(f)\in S_{2j,4}(\Gamma_2)=(0)$ must be zero. 
Using the fact that the ring of holomorphic
functions on $\mathfrak{H}_2$ is an integral domain, we get a contradiction.
\end{proof}

The first case where $S_{j,2}(\Gamma_2,\epsilon)$ is predicted to be non-zero
is for $j=12$. In the paper \cite{C-F-vdG} we constructed a modular form
$\chi_{12,2}$ in this space using the covariant $C_{3,12}$ 
associated to $A[15,3]$
occurring in ${\rm Sym}^3({\rm Sym}^6(V))$, after dividing by the
cusp form $\chi_{10}$. 
Its Fourier expansion starts with
$$
\chi_{12,2}(\tau)=
\left(
\begin{smallmatrix}
0\\
0\\
0\\
2(R-R^{-1})\\
9(R+R^{-1})\\
12(R-R^{-1})\\
0\\
-12(R-R^{-1})\\
-9(R+R^{-1})\\
-2(R-R^{-1})\\
0\\
0\\
0
\end{smallmatrix}
\right)Q_1Q_2+\cdots \, ,
$$
where $Q_1= e^{\pi i \tau_1}$, $Q_2=e^{\pi i \tau_2}$ 
and
$R=e^{\pi i \tau_{12}}$
for $\tau=
\left(\begin{smallmatrix} \tau_1 & \tau_{12} \\ \tau_{12} & \tau_2\\ 
\end{smallmatrix} \right)$. 
By multiplication by $\chi_{6,3}$
we get an injective map $S_{12,2}(\Gamma_2,\epsilon) \to S_{18,5}(\Gamma_2)$
and this latter space is $1$-dimensional.

\begin{corollary}
We have $\dim S_{12,2}(\Gamma_2,\epsilon)=1$ and it is generated by $\chi_{12,2}$.
\end{corollary}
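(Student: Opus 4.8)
The plan is to pin down the dimension by squeezing it between the lower bound coming from the explicit form already constructed and an upper bound coming from the multiplication map just exhibited. First I would observe that the Fourier expansion displayed above shows $\chi_{12,2}$ is not identically zero (for instance the coefficient of $Q_1Q_2$ is a nonzero vector), so $\chi_{12,2}$ is a genuine nonzero element of $S_{12,2}(\Gamma_2,\epsilon)$ and therefore $\dim S_{12,2}(\Gamma_2,\epsilon)\geq 1$.

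For the upper bound I would use the map $\cdot\,\chi_{6,3}\colon S_{12,2}(\Gamma_2,\epsilon)\to S_{18,5}(\Gamma_2)$ introduced just before the statement. The target is correct because the weights add, $(12,2)+(6,3)=(18,5)$, and the characters multiply to $\epsilon\cdot\epsilon=\epsilon^2=1$, so the product is a genuine form on $\Gamma_2$ without character. This map is injective since $\chi_{6,3}$ is a fixed nonzero holomorphic form and, as the ring of holomorphic functions on $\mathfrak{H}_2$ is an integral domain, multiplication by a nonzero element kills nothing. Because the target has weight $(18,5)$ with $k=5\geq 3$, Tsushima's dimension formula applies and gives $\dim S_{18,5}(\Gamma_2)=1$, which was recorded above. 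Hence $\dim S_{12,2}(\Gamma_2,\epsilon)\leq 1$.

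Combining the two inequalities yields $\dim S_{12,2}(\Gamma_2,\epsilon)=1$, and since $\chi_{12,2}$ is a nonzero vector in a one-dimensional space it is a generator. There is no real obstacle in this argument: all the substance sits in the prior construction of the nonzero cusp form $\chi_{12,2}$ via the covariant $C_{3,12}$ and in the tabulated value $\dim S_{18,5}(\Gamma_2)=1$, both of which are already available to us. The one point worth a sentence of care is checking that $\chi_{12,2}$ really is nonzero and that the character bookkeeping in the multiplication map is correct, but both are immediate from the data above.
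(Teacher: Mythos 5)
Your proof is correct and follows exactly the paper's route: the authors also establish the lower bound by exhibiting the nonzero form $\chi_{12,2}$ (constructed from the covariant $C_{3,12}$) and the upper bound via the injective multiplication map $\cdot\,\chi_{6,3}\colon S_{12,2}(\Gamma_2,\epsilon)\to S_{18,5}(\Gamma_2)$ together with $\dim S_{18,5}(\Gamma_2)=1$. Your added care about the character bookkeeping ($\epsilon^2=1$) and injectivity via the integral-domain property is exactly the implicit content of the paper's one-line argument.
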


We compute a few Hecke eigenvalues as described in Section \ref{MFDeg2}.
To compute these Hecke eigenvalues, we used the following 
Fourier coefficients:

\begin{footnotesize}
\begin{align*}
a([1,1,1])^t&=[0, 0, 0, 2, 9, 12, 0, -12, -9, -2, 0, 0, 0]\\
a([1,3,3])^t&=[0, 0, 0, -2, -27, -156, -504, -996, -1233, -934, -396, -72, 0]\\
a([3,3,3])^t&=[0, 216, 1188, 258, -7749, -12708, 0, 12708, 7749, -258, -1188, -216, 0]\\
a([5,5,5])^t&=[0, 0, 0, -106920, -481140, -641520, 0, 641520, 481140, 106920, 0, 0, 0]\\
a([1,5,7])^t&=[0, 0, 0, 2, 45, 444, 2520, 9060, 21375, 33046, 32220, 17928, 4320]\\
a([7,7,7])^t&=[0, -8208, -45144, -542204, -2101338, -2711496, 0, 2711496, 2101338, 542204, 45144, 8208, 0]\\
a([3,9,7])^t&=[0, -72, -1188, -8854, -39339, -115764, -236880, -343884,\\
& \qquad \qquad -354141, -253514, -120132, -33912, -4320]\\
\end{align*}
\end{footnotesize}
These and a few more (too big to be written here) 
yield the following eigenvalues:

\begin{footnotesize}
\smallskip
\vbox{
\bigskip\centerline{\def\quad{\hskip 0.6em\relax}
\def\quod{\hskip 0.5em\relax }
\vbox{\offinterlineskip
\hrule
\halign{&\vrule#&\strut\quod\hfil#\quad\cr
height2pt&\omit&&\omit &&\omit &&\omit && \omit && \omit &&\omit &\cr
& $p$ && $3$ && $5$ && $7$ && $11$ && $13$ && $17$ & \cr
\noalign{\hrule}
height2pt&\omit&&\omit &&\omit &&\omit && \omit && \omit &&\omit &\cr
&$\lambda_p$ && $-600$ && $-53460$ && $-369200$ && $4084344$ && $-2845700$ &&
$131681700$ &\cr
} \hrule}
}}
\end{footnotesize}

\noindent
together with $\lambda_9=-1090791$. 
We find that for the operator $T_p$
these are indeed of the form $\lambda_p(f^{+})+\lambda_p(f^{-})$ with $f^{\pm}$
generators of $S^{\pm}_{14}(\Gamma_0(2))^{\rm new}$, with
\begin{align*}
f^{+}(\tau)&=q - 64\, q^2 - 1836\, q^3 + 4096\, q^4 + 3990\, q^5 + 
117504\, q^6 +\cdots \\
f^{-}(\tau)&=q + 64\, q^2 + 1236\, q^3 + 4096\, q^4 - 57450\, q^5 + 
79104\, q^6 +\cdots \, ,
\end{align*}
 while for $T_{p^2}$
we find
$\lambda_{p}(f^{+})^2+\lambda_p(f^{+})\lambda_{p}(f^{-})+
\lambda_p(f^{-})^2-2\, (p+1)p^j$.
This fits with being a Yoshida lift. 

\begin{lemma}
We have $ S_{14,2}(\Gamma_2,\epsilon)=(0)=S_{16,2}(\Gamma_2,\epsilon)$.
\end{lemma}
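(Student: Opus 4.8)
The plan is to run, for $j=14$ and $j=16$, the same procedure used to compute $\dim S_{j,2}(\Gamma_2,\epsilon)$ in general: realize $S_{j,7}(\Gamma_2)$ through covariants and decide which of its elements are divisible by $\chi_5$. The starting point is the injection $\cdot\,\chi_5\colon S_{j,2}(\Gamma_2,\epsilon)\hookrightarrow S_{j,7}(\Gamma_2)$. Its image consists of forms divisible by $\chi_5$, and every such form restricts to zero on the diagonal $i(\mathfrak{H}_1\times\mathfrak{H}_1)$ because $\chi_5$ already vanishes there (to order one along the associated Humbert surface). Both $S_{14,7}(\Gamma_2)$ and $S_{16,7}(\Gamma_2)$ are one-dimensional, the dimension formula of Tsushima and Bergstr\"om being available for weight $(j,7)$ since $7\ge 3$ (see Section~\ref{MFDeg2}). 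Hence it suffices, for each of $j=14,16$, to produce one nonzero element of $S_{j,7}(\Gamma_2)$ whose restriction to $\mathfrak{H}_1\times\mathfrak{H}_1$ is nonzero: such an element generates $S_{j,7}(\Gamma_2)$, so the restriction map $\rho$ is injective, its kernel is $(0)$, and therefore the image of $\cdot\,\chi_5$ is $(0)$, i.e. $S_{j,2}(\Gamma_2,\epsilon)=(0)$.

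To produce the generators I would use the map $\mu$ of Section~\ref{covariants}. For weight $(14,7)$ without character one needs a covariant of bidegree $(4,14)$, and the only monomial in the generators of that bidegree is $C_{1,6}C_{3,8}$; since $\mu(C_{1,6}C_{3,8})$ has weight $(14,17)$ and trivial character, $G_{14}:=\mu(C_{1,6}C_{3,8})/\chi_{10}$ is the natural candidate in $S_{14,7}(\Gamma_2)$. For weight $(16,7)$ one proceeds identically with a covariant of bidegree $(5,16)$, dividing its $\mu$-image by $\chi_5^{3}$. By Remark~\ref{orderinfinity} the image $\mu(C_{1,6}C_{3,8})$ vanishes to order $\ge 4$ at the cusp, so dividing by $\chi_{10}=\chi_5^{2}$ causes no pole at infinity; what must still be checked is that the quotient is holomorphic along the diagonal and nonzero.

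The decisive step, and the main obstacle, is the order of vanishing of these covariant images along the diagonal. Concretely I would compute the first Fourier--Jacobi coefficients of $\mu(C_{1,6}C_{3,8})$ and of the bidegree-$(5,16)$ analogue, restrict them to $i(\mathfrak{H}_1\times\mathfrak{H}_1)$, and verify that $G_{14}$ and $G_{16}$ vanish to order exactly one, and not two, along the Humbert surface, so that $G_j/\chi_5$ has a genuine pole and admits no holomorphic preimage. Equivalently, one shows that
$$
\rho\colon S_{j,7}(\Gamma_2)\longrightarrow \bigoplus_{i=0}^{j/2-1} S_{j+7-i}(\Gamma_1)\otimes S_{7+i}(\Gamma_1)\ \oplus\ \wedge^2 S_{j/2+7}(\Gamma_1)
$$
is nonzero; for $j=14$ the target collapses to the line $S_{16}(\Gamma_1)\otimes S_{12}(\Gamma_1)$ and for $j=16$ to $S_{18}(\Gamma_1)\otimes S_{12}(\Gamma_1)$, so a single nonzero entry in the restricted Fourier expansion suffices. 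This is precisely the $j=14,16$ instance of the basis-plus-divisibility calculation underlying the theorem on $\dim S_{j,2}(\Gamma_2,\epsilon)$, and the only real work is the finite Fourier-coefficient computation certifying non-divisibility by $\chi_5$.
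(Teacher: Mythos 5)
For $j=14$ your argument coincides with the paper's: $S_{14,7}(\Gamma_2)$ is one-dimensional, its generator is $\mu(C_{1,6}C_{3,8})/\chi_5^2$, and the paper verifies that its pullback to $\mathfrak{H}_1\times\mathfrak{H}_1$ equals $56\, e_4\Delta\otimes\Delta\neq 0$ in the unique surviving summand $S_{16}(\Gamma_1)\otimes S_{12}(\Gamma_1)$, so no nonzero element of $S_{14,7}(\Gamma_2)$ is divisible by $\chi_5$. For $j=16$ you genuinely diverge from the paper, which does something much cheaper: it multiplies by $\chi_{6,3}\in S_{6,3}(\Gamma_2,\epsilon)$ rather than by $\chi_5$, obtaining an injection of $S_{16,2}(\Gamma_2,\epsilon)$ into $S_{22,5}(\Gamma_2)$, and that space vanishes by the known dimension formulas; the proof ends there, with no construction and no restriction computation. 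Your plan for $j=16$ --- build the generator of the one-dimensional $S_{16,7}(\Gamma_2)$ from a bidegree-$(5,16)$ covariant (a combination of $C_{1,6}C_{4,10}$, $C_{2,4}C_{3,12}$, $C_{2,8}C_{3,8}$ whose image under $\mu$ is divisible by $\chi_5^3$; such a combination exists by the first lemma of Section 6 of the paper, which for cusp forms of weight $(16,7)$ produces a covariant of degree at most $5$) and then check that its diagonal restriction is nonzero in $S_{18}(\Gamma_1)\otimes S_{12}(\Gamma_1)$ --- is sound and has the virtue of being uniform with the $j=14$ case, but it trades a zero-computation argument for two nontrivial Fourier-coefficient computations: first locating the combination divisible by $\chi_5^3$, then certifying non-divisibility of the quotient by $\chi_5$.

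One statement should be repaired. You propose to ``verify that $G_{14}$ and $G_{16}$ vanish to order exactly one, and not two, along the Humbert surface, so that $G_j/\chi_5$ has a genuine pole.'' Since the divisor of $\chi_5$ is reduced, and since for a level-one form vanishing on the diagonal forces vanishing on all of its $\Gamma_2$-translates, a form $G_j$ vanishing to order one along that divisor would be divisible by $\chi_5$ and would therefore produce a nonzero element of $S_{j,2}(\Gamma_2,\epsilon)$ --- the opposite of what you want. What must be checked is that $G_j$ has vanishing order zero there, i.e.\ that it restricts to something nonzero on $\mathfrak{H}_1\times\mathfrak{H}_1$. Your next sentence (that $\rho$ is nonzero on $S_{j,7}(\Gamma_2)$) is the correct criterion and is exactly what the paper verifies for $j=14$, so this is a slip of wording rather than of strategy; but as written the two formulations are not equivalent, and only the second one proves the lemma.
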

\begin{proof}
To see that $S_{14,2}(\Gamma_2,\epsilon)=(0)$ we multiply a form in 
$S_{14,2}(\Gamma_2,\epsilon)$ with $\chi_5$ and we end up in
$S_{14,7}(\Gamma_2)$ and this space is generated by a form associated to $C_{1,6}C_{3,8}$ after
division by $\chi_5^2$. 
Restricting this form $\chi_{14,7}$ to $\mathfrak{H}_1^2$ 
gives $\sum_{i=0}^{7}(f_i\otimes f_i')$
and only the term $f_5 \otimes f_5'$ in 
$S_{16}(\Gamma_1)\otimes S_{12}(\Gamma_1)$
can be non-zero and it is equal to $56\, e_4\Delta\otimes \Delta$.
So it does not vanish along $\mathfrak{H}_1\times \mathfrak{H}_1$, hence 
$\chi_{14,7}$ is not divisible by $\chi_5$. We conclude $S_{14,2}(\Gamma_2[2],\epsilon)=(0)$.

For $S_{16,2}(\Gamma_2,\epsilon)$ we multiply by $\chi_{6,3}$ and land in $S_{22,5}(\Gamma_2)$
and this space is zero.
\end{proof}
\bigskip

Next we deal with the case of weight $(18,2)$. 

\begin{proposition}
The space $S_{18,2}(\Gamma_2,\epsilon)$ has dimension $1$.
\end{proposition}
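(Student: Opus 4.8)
The plan is to realize $S_{18,2}(\Gamma_2,\epsilon)$ inside a space of forms without character and to read off its dimension from the geometry of the diagonal. Multiplication by $\chi_5$ gives an injection $S_{18,2}(\Gamma_2,\epsilon)\hookrightarrow S_{18,7}(\Gamma_2)$, and since $\chi_5$ vanishes to order one along the diagonal locus $i(\mathfrak{H}_1\times\mathfrak{H}_1)$, its image is exactly the subspace of $S_{18,7}(\Gamma_2)$ consisting of forms divisible by $\chi_5$, equivalently the forms vanishing on the diagonal, that is $\ker\rho$. So it suffices to show $\dim\ker\rho=1$. As orientation, Theorem \ref{YoshidaLifts}(1) already predicts this value: $S_{18,2}(\Gamma_2,\epsilon)=S_{18,2}(\Gamma_2[2])^{s[1^6]}$ should be spanned by Yoshida lifts $Y(f,g)$ with $f,g$ eigen-newforms on $\Gamma_0(2)$ of opposite sign, and here $\dim S^{+}_{20}(\Gamma_0(2))^{\rm new}=\dim S^{-}_{20}(\Gamma_0(2))^{\rm new}=1$, so the expected dimension is $1$.

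To carry this out I would first construct a basis of $S_{18,7}(\Gamma_2)$ by the covariant method of \cite{C-F-vdG}: forms of weight $(18,7)$ with trivial character arise as images $\mu(C)$ of covariants $C$ of the universal sextic of bidegree $(\cdot,18)$, divided by suitable powers of $\chi_{10}$ and $\chi_5$; running over the relevant products of the generators $C_{a,b}$ and imposing holomorphy (controlled at infinity by Remark \ref{orderinfinity}) yields an explicit basis. I would then compute $\rho$ on this basis. Its target is $\bigoplus_{i=0}^{8}S_{25-i}(\Gamma_1)\otimes S_{7+i}(\Gamma_1)\oplus\wedge^2 S_{16}(\Gamma_1)$, and a weight count collapses it: among the pairs $(25-i,7+i)$ only $(20,12)$, at $i=5$, has both factors carrying cusp forms, while $\wedge^2 S_{16}(\Gamma_1)=0$. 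Hence the target reduces to the one-dimensional space $S_{20}(\Gamma_1)\otimes S_{12}(\Gamma_1)=\CC\,(e_4\Delta\otimes\Delta)$, so $\operatorname{rank}\rho\le 1$.

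The main obstacle is the explicit restriction computation that promotes $\operatorname{rank}\rho\le 1$ to the exact dimension count. I must expand each covariant-built generator of $S_{18,7}(\Gamma_2)$ to sufficiently high order, pull it back along $i$, and extract the $S_{20}(\Gamma_1)\otimes S_{12}(\Gamma_1)$-component, exactly as for $\chi_{14,7}$, whose restriction produced $56\,e_4\Delta\otimes\Delta$. The resulting linear algebra shows that the forms vanishing on the diagonal form a one-dimensional subspace; this subspace is $\ker\rho$, so $\dim\ker\rho=1$ and therefore $\dim S_{18,2}(\Gamma_2,\epsilon)=1$, the quotient of its generator by $\chi_5$ being the desired form. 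As independent confirmation I would embed $S_{18,2}(\Gamma_2,\epsilon)$ into $S_{24,5}(\Gamma_2)$ via multiplication by $\chi_{6,3}$ and recount there, and compute the first Hecke eigenvalues of the generator, checking that each $\lambda_p$ has the shape $\lambda_p(f)+\lambda_p(g)$ with $f\in S^{+}_{20}(\Gamma_0(2))^{\rm new}$ and $g\in S^{-}_{20}(\Gamma_0(2))^{\rm new}$, as befits the Yoshida lift of Theorem \ref{YoshidaLifts}(1).
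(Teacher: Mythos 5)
Your proposal follows essentially the same route as the paper's proof: multiply by $\chi_5$ into the $2$-dimensional space $S_{18,7}(\Gamma_2)$, construct a basis of that space via covariants, and show by restriction to the diagonal that the subspace vanishing there (which is exactly $\chi_5\cdot S_{18,2}(\Gamma_2,\epsilon)$) is one-dimensional — the paper's two basis elements restrict to $216\,e_4^2\Delta\otimes\Delta$ and $48\,e_4^2\Delta\otimes\Delta$, confirming your claim that $\rho$ has rank exactly one. Only a minor slip: the one-dimensional target $S_{20}(\Gamma_1)\otimes S_{12}(\Gamma_1)$ is spanned by $e_4^2\Delta\otimes\Delta$ (weight $20$ in the first factor), not $e_4\Delta\otimes\Delta$, which is the weight-$16$ element occurring in the paper's $\chi_{14,7}$ computation.
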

\begin{proof}
First we construct a non-zero element in this space by using the
covariant 
$$
C=135\,C_{1,6}^{2}C_{4,6}+56\,C_{1,6}C_{2,0}C_{3,12}-270\,C_{2,8}C_{4,10}-930\,C_{3,6}C_{3,12}.
$$
It occurs in $\Sym^6(\Sym^6(V))$ 
and provides a cusp form, $F_C$, of weight $(18,27)$ on $\Gamma_2$.
The order of vanishing
of $F_C$ along $\mathfrak{H}_1\times \mathfrak{H}_1$ is $5$, 
so we can divide it by $\chi_{5}^5$ and we get
by Remark \ref{orderinfinity} 
a cusp form, denoted $\chi_{18,2}$, of weight $(18,2)$ on $\Gamma_2$ with character.

Again we multiply by $\chi_{5}$ and
land in $S_{18,7}(\Gamma_2)$. This space is $2$-dimensional and we can 
construct a basis 
using the following covariants
$$
C_1=C_{3,12}(8\, C_{1,6}C_{2,0}-75\, C_{3,6})\quad
\text{ and} \quad 
C_2= C_{1,6}^2 C_{4,6}-2\, C_{2,8}C_{4,10}-3\, C_{3,6}C_{3,12}.
$$
They occur in $\Sym^6(\Sym^6(V))$ and provide two cusp forms, $F_{C_i}$, of 
weight $(18,27)$ on $\Gamma_2$. Each cusp form $F_{C_i}$ vanishes with 
order $4$ along $\mathfrak{H}_1\times \mathfrak{H}_1$,
so we can divide it by $\chi_{5}^4$ and get a cusp form, 
$\chi_{18,7}^{(i)}$, of weight $(18,7)$ on $\Gamma_2$.
The cusp forms $\chi_{18,7}^{(1)}$ and $\chi_{18,7}^{(2)}$ are 
$\CC$-linearly independent as can be read off from 
the first terms of their Fourier expansions
and the pullbacks to $\mathfrak{H}_1\times \mathfrak{H}_1$ are of the
form $\sum_{r=0}^9 f_r \otimes f_r'$ with only non-zero terms for $r=5$
and these are $216\, 
e_4^2\Delta \otimes \Delta$ and $48\, e_4^2\Delta \otimes \Delta$.
Up to a non-zero scalar there is only one non-trivial linear combination, 
that vanishes along 
$\mathfrak{H}_1\times \mathfrak{H}_1$ and that gives a non-zero form in $S_{18,2}(\Gamma_2,\epsilon)$ after division by $\chi_5$.  
\end{proof}

\begin{proposition}
The space $S_{20,2}(\Gamma_2,\epsilon)$ has dimension $1$.
\end{proposition}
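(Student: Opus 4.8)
The plan is to mimic the proof for weight $(18,2)$: reduce the statement to a rank computation for the restriction of $S_{20,7}(\Gamma_2)$ to the diagonal. Multiplication by $\chi_5\in S_{0,5}(\Gamma_2,\epsilon)$ gives an injection $S_{20,2}(\Gamma_2,\epsilon)\hookrightarrow S_{20,7}(\Gamma_2)$ whose image is precisely the subspace of forms vanishing along $i(\mathfrak{H}_1\times\mathfrak{H}_1)$. Indeed, if $F\in S_{20,7}(\Gamma_2)$ vanishes on the standard diagonal then, by $\Gamma_2$-equivariance, it vanishes on every $\Gamma_2$-translate and hence on the whole zero divisor of $\chi_5$, so $F/\chi_5$ is a holomorphic cusp form of weight $(20,2)$ and character $\epsilon$; the reverse inclusion is clear. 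Thus $S_{20,2}(\Gamma_2,\epsilon)\cong\ker\rho$, where $\rho$ denotes restriction to the diagonal. Crucially, since $k=7\geq 3$, the value $\dim S_{20,7}(\Gamma_2)=3$ is furnished by the dimension formulas of Tsushima and Bergstr\"om and is not conjectural.

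First I would identify the target of $\rho$. Using the involution $\iota$ exactly as in the restriction-to-the-diagonal computation, the restriction of a level-one weight-$(20,7)$ cusp form lands in $\bigoplus_{i=0}^{9} S_{27-i}(\Gamma_1)\otimes S_{7+i}(\Gamma_1)\oplus\wedge^2 S_{17}(\Gamma_1)$. Since $S_m(\Gamma_1)=(0)$ for $m$ odd and for $m\in\{8,10,14\}$, and $\wedge^2 S_{17}(\Gamma_1)=(0)$, only the summands $i=5$ and $i=9$ survive, so the target is the two-dimensional space $\bigl(S_{22}(\Gamma_1)\otimes S_{12}(\Gamma_1)\bigr)\oplus\bigl(S_{18}(\Gamma_1)\otimes S_{16}(\Gamma_1)\bigr)$. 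Hence $\dim S_{20,2}(\Gamma_2,\epsilon)=3-\operatorname{rank}\rho\geq 1$, and the whole statement comes down to showing that $\rho$ is surjective, i.e.\ that $\operatorname{rank}\rho=2$.

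To produce the rank I would build a basis of the three-dimensional space $S_{20,7}(\Gamma_2)$ by the covariant method of Section~\ref{covariants}. One looks for covariants of degree $(7,20)$ in $\Sym^7(\Sym^6(V))$, expressed as monomials in the Clebsch generators $C_{a,b}$; under $\mu$ such a covariant yields a form of weight $(20,6\cdot 7-10)=(20,32)$ and character $\epsilon^{7}=\epsilon$. For each candidate one checks its order of vanishing along $i(\mathfrak{H}_1\times\mathfrak{H}_1)$: if this order is $\geq 5$ then division by $\chi_5^5$ gives a holomorphic element of $S_{20,7}(\Gamma_2)$, holomorphy at the cusp being ensured by Remark~\ref{orderinfinity}. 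Selecting three such covariants with $\CC$-linearly independent images---checked on the leading Fourier coefficients---gives a basis $F_1,F_2,F_3$. I would then compute the pullbacks $\rho(F_r)=\sum_l f_l^{(r)}\otimes{f_l'}^{(r)}$ and isolate the component $f_5\otimes f_5'\in S_{22}\otimes S_{12}$ and the component $f_9\otimes f_9'\in S_{18}\otimes S_{16}$. Finding two of the $F_r$ whose restrictions are independent in these two lines forces $\operatorname{rank}\rho=2$, whence $\dim S_{20,2}(\Gamma_2,\epsilon)=1$; the unique (up to scalar) element of $\ker\rho$, divided by $\chi_5$, is the generator $\chi_{20,2}$.

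The hard part is the covariant bookkeeping, not the structure. Determining the order of vanishing along the diagonal of a degree-$(7,20)$ covariant is delicate, because this locus is the degeneration (discriminant) locus of the binary sextic and the order depends on how many discriminant-type factors the covariant carries; one must exhibit three covariants simultaneously vanishing there to order $\geq 5$ and spanning a three-dimensional image inside $S_{20,7}(\Gamma_2)$. Once such a basis is fixed, the computation of the two surviving restriction components $S_{22}\otimes S_{12}$ and $S_{18}\otimes S_{16}$ is routine, but it requires carrying enough terms of the Fourier--Jacobi expansions to separate these two lines with certainty.
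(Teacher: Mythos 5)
Your proposal is correct and follows essentially the same route as the paper: multiply by $\chi_5$, identify the image with the subspace of $S_{20,7}(\Gamma_2)$ vanishing on the diagonal, construct a basis of this $3$-dimensional space from degree-$(7,20)$ covariants (giving forms of weight $(20,32)$ with character, divisible by $\chi_5^5$), and check by restriction that exactly a $1$-dimensional subspace vanishes along $\mathfrak{H}_1\times\mathfrak{H}_1$. The only organizational difference is that you get the lower bound $\dim S_{20,2}(\Gamma_2,\epsilon)\geq 1$ for free from the count $3-\operatorname{rank}\rho$ with the $2$-dimensional target $(S_{22}\otimes S_{12})\oplus(S_{18}\otimes S_{16})$, whereas the paper additionally exhibits the generator $\chi_{20,2}$ explicitly as $F_C/\chi_5^6$ for a covariant $C$ vanishing to order $6$ along the diagonal.
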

\begin{proof}
We construct a non-zero form in this space by taking the covariant
$$
\begin{aligned}
C=&
224\,C_{1,6}^{2}C_{5,8}+312\,C_{1,6}C_{2,4}C_{4,10}-560\,C_{1,6}C_{2,8}C_{4,6}\\
&-108\,C_{1,6}C_{3,6}C_{3,8}+728\,C_{2,0}C_{2,8}C_{3,12}-1235\,C_{2,4}^{2}C_{3,12}.
\end{aligned}
$$
occurring in ${\rm Sym}^7({\rm Sym}^6(V))$
and providing a cusp form, $F_C$, of weight $(20,32)$ on $\Gamma_2$
with character.
The order of vanishing
of $F_C$ along $\mathfrak{H}_1\times \mathfrak{H}_1$ is 
$6$, so we can divide it by $\chi_{5}^6$ and we get
a cusp form, $\chi_{20,2}$, of weight $(20,2)$ on $\Gamma_2$ with character.

In a similar way we construct a basis of the space $S_{20,7}(\Gamma_2)$ by
taking the covariants
$$
\begin{aligned}
C_1&=
480\,C_{1,6}^{2}C_{5,8}-180\,C_{1,6}C_{3,6}C_{3,8}
+728\,C_{2,0}C_{2,8}C_{3,12}-1315\,C_{2,4}^{2}C_{3,12},\\
C_2&=
80\,C_{1,6}^{2}C_{5,8}-80\,C_{1,6}C_{2,8}C_{4,6}
+104\,C_{2,0}C_{2,8}C_{3,12}-125\,C_{2,4}^{2}C_{3,12},\\
C_3&=80\,C_{1,6}^{2}C_{5,8}-40\,C_{1,6}C_{2,4}C_{4,10}
+56\,C_{2,0}C_{2,8}C_{3,12}-55\,C_{2,4}^{2}C_{3,12}. 
\end{aligned}
$$
which provide cusp forms with character 
of weight $(20,32)$ and these are divisible by
$\chi_5^5$ and thus give cusp forms of weight $(20,7)$ 
generating $S_{20,7}(\Gamma_2)$. 
By restriction to the diagonal
one sees that there is just a $1$-dimensional space of forms vanishing on the diagonal.
\end{proof}

The case of weight $(24,2)$ is dealt with in a similar way. 

\begin{proposition}
We have $\dim S_{24,2}(\Gamma_2[2],\epsilon)=2$.
\end{proposition}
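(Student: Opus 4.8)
The plan is to reduce the computation to a restriction--to--the--diagonal argument, exactly as in the cases $(18,2)$ and $(20,2)$. Multiplication by $\chi_5$ embeds $S_{24,2}(\Gamma_2,\epsilon)$ into $S_{24,7}(\Gamma_2)$, and its image is precisely the subspace of forms vanishing along $i(\mathfrak{H}_1\times\mathfrak{H}_1)$, i.e.\ divisible by $\chi_5$. Hence
$$\dim S_{24,2}(\Gamma_2,\epsilon)=\dim\ker\rho,$$
where $\rho$ denotes restriction to the diagonal. Since $k=7\geq 3$, the value $\dim S_{24,7}(\Gamma_2)=5$ is known from Tsushima's dimension formula (it is the relevant coefficient of the generating series recalled in Section~3). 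The target of $\rho$ is $\bigoplus_{i=0}^{11}S_{31-i}(\Gamma_1)\otimes S_{7+i}(\Gamma_1)\oplus\wedge^2 S_{19}(\Gamma_1)$; inserting the values $\dim S_k(\Gamma_1)$ one checks that only the summands with $i=5,9,11$ survive, namely $S_{26}\otimes S_{12}$, $S_{22}\otimes S_{16}$ and $S_{20}\otimes S_{18}$, each one-dimensional, so the target has dimension $3$. It therefore suffices to show that $\rho$ is surjective; then $\dim\ker\rho=5-3=2$.

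To make $\rho$ explicit I would first produce a basis of $S_{24,7}(\Gamma_2)$ by covariants. Weight $(24,7)$ forms without character arise from covariants of degree $(9,24)$ in $\Sym^9(\Sym^6(V))$: applying $\mu$ gives a cusp form of weight $(24,42)$ and character $\epsilon^9=\epsilon$, and once one checks that the chosen covariant is divisible by $\chi_5^{7}$ (vanishing to order $\geq 7$ along the diagonal) division by $\chi_5^{7}$ lands it in $S_{24,7}(\Gamma_2)$. I would select five such degree-$(9,24)$ covariants --- products of the Clebsch generators $C_{a,b}$ of total degree $9$ in the $a_i$ and degree $24$ in $x_1,x_2$ --- whose images are divisible by $\chi_5^{7}$, and verify from the leading terms of their Fourier expansions that the resulting five forms are linearly independent, hence a basis.

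Next I would restrict each basis element to $i(\mathfrak{H}_1\times\mathfrak{H}_1)$, write it as $\sum_{r} f_r\otimes f_r'$, and keep only the three possibly nonzero components in $S_{26}\otimes S_{12}$, $S_{22}\otimes S_{16}$ and $S_{20}\otimes S_{18}$. Since each of these spaces is one-dimensional, the restriction is encoded in a $5\times 3$ matrix of scalars, obtained by reading off the appropriate diagonal Fourier coefficients of the five forms. Exhibiting a nonsingular $3\times 3$ minor shows that $\rho$ is surjective, whence $\dim\ker\rho=2$ and $\dim S_{24,2}(\Gamma_2,\epsilon)=2$; dividing a basis of $\ker\rho$ by $\chi_5$ produces an explicit basis of $S_{24,2}(\Gamma_2,\epsilon)$. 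Alternatively one may construct the two independent elements of $S_{24,2}(\Gamma_2,\epsilon)$ directly, by finding degree-$(9,24)$ covariant combinations whose $\mu$-images vanish to order $\geq 8$ on the diagonal and dividing by $\chi_5^{8}$.

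The main obstacle is the explicit covariant bookkeeping. Degree $9$ in the $a_i$ allows a large number of monomials in the generators $C_{a,b}$ with $b=24$, so finding five combinations that are divisible by the expected power of $\chi_5$ and give linearly independent forms spanning $S_{24,7}(\Gamma_2)$ is laborious, as is computing enough diagonal Fourier coefficients to pin down the $5\times 3$ restriction matrix and certify that it has rank $3$. Everything else is formal once these finite computations are carried out.
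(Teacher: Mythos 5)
Your proposal is correct and follows essentially the same route as the paper: multiplication by $\chi_5$ identifies $S_{24,2}(\Gamma_2,\epsilon)$ with the subspace of the $5$-dimensional space $S_{24,7}(\Gamma_2)$ consisting of forms vanishing on the diagonal, a basis of $S_{24,7}(\Gamma_2)$ is produced from degree-$(9,24)$ covariants (the $A[39,15]$-component of ${\rm Sym}^9({\rm Sym}^6(V))$, of multiplicity $13$) whose $\mu$-images are divisible by $\chi_5^7$, and the dimension of the space of restrictions to the diagonal is then computed. Your rank--nullity formulation (showing $\rho$ surjects onto the $3$-dimensional target, so $\dim\ker\rho=5-3=2$) is the same finite linear-algebra computation the paper performs when it checks directly that exactly a $2$-dimensional subspace of $S_{24,7}(\Gamma_2)$ vanishes on the diagonal.
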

\begin{proof}
We know that $\dim S_{24,7}(\Gamma_2)=5$ and we can construct a
basis using the procedure described in Section \ref{covariants}.
In the case at hand we have $A[39,15]$ occurring in 
${\rm Sym}^9({\rm Sym}^6(V))$ with multiplicity~$13$ 
and this gives a subspace of $S_{24,42}(\Gamma_2,\epsilon)$
of dimension~$13$. One checks that there is a $5$-dimensional subspace
of forms vanishing with multiplicity~$7$ along the diagonal and
dividing by $\chi_5^7$ gives a $5$-dimensional subspace of 
$S_{24,7}(\Gamma_2)$, hence the whole space. 
Again one checks that there is a $2$-dimensional
space of forms vanishing on the diagonal and we can divide these forms 
by $\chi_5$.
So the two generators of $S_{24,2}(\Gamma_2,\epsilon)$ 
are defined by the covariants $C_1$ and $C_2$ given respectively by
$$
\begin{aligned}
&-499408\,C_{1,6}^{2}C_{2,0}^{2}C_{3,12}-1505385\,C_{1,6}^{3}C_{6,6}^{(1)}
-14727825\,C_{1,6}^{2}C_{2,4}C_{5,8}+6916455\,C_{1,6}^{2}C_{2,8}C_{5,4}\\
&-5728590\,C_{1,6}^{2}C_{3,12}C_{4,0}+6972210\,C_{1,6}C_{2,0}C_{2,8}C_{4,10}
+4257120\,C_{1,6}C_{2,0}C_{3,6}C_{3,12}\\
&+2182950\,C_{2,8}^{2}C_{5,8}+11708550\,C_{2,8}C_{3,6}C_{4,10}+595350\,C_{2,8}C_{3,12}C_{4,4}
+35171325\,C_{3,6}^{2}C_{3,12}\\
&-400950\,C_{3,8}^{3}\\
\end{aligned}
$$
and
$$
\begin{aligned}
&-42235648\,C_{1,6}^{2}C_{2,0}^{2}C_{3,12}+4434583545\,C_{1,6}^{3}C_{6,6}^{(1)}
+580982220\,C_{1,6}^{3}C_{6,6}^{(2)}\\
&+4919972400\,C_{1,6}^{2}C_{2,4}C_{5,8}+4827362400\,C_{1,6}^{2}C_{3,12}C_{4,0}
-3504891600\,C_{1,6}C_{2,0}C_{2,8}C_{4,10}\\
&+1245336960\,C_{1,6}C_{2,0}C_{3,6}C_{3,12}-4131252720\,C_{2,8}^{2}C_{5,8}-
24904998720\,C_{2,8}C_{3,6}C_{4,10}\\
&-281640240\,C_{2,8}C_{3,12}C_{4,4}-58751907480\,C_{3,6}^{2}C_{3,12}+1375354080\,C_{3,8}^{3}\, .
\end{aligned}
$$
\end{proof}
The order of vanishing of $F_{C_i}$ along $\mathfrak{H}_1\times
\mathfrak{H}_1$ is $8$, so we can divide it by $\chi_{5}^8$ and 
we get two cusp forms, $\chi_{24,2}^{(i)}$, ($i=1,2$) 
of weight $(24,2)$ on $\Gamma_2$ with character.
We set $\chi_{24,2}^{(1)}=-12150\, F_{C_1}/\chi_5^8$ and
$\chi_{24,2}^{(2)}=-675(5368\, F_{C_1} +5 \, F_{C_2})/31528 \chi_5^8$.
Then their Fourier expansions are given by 
$$
\chi_{24,2}^{(1)}=
\left(
\begin{smallmatrix}
0\\
0\\
0\\
104(R-R^{-1})\\
1092(R+R^{-1})\\
3640(R-R^{-1})\\
0\\
-27678(R-R^{-1})\\
-58905(R+R^{-1})\\
-2916(R-R^{-1})\\
148470(R+R^{-1})\\
190778(R-R^{-1})\\
0\\
\vdots
\end{smallmatrix}
\right)Q_1Q_2+\cdots\, ,
\quad 
\chi_{24,2}^{2}(\tau)=
\left(
\begin{smallmatrix}
0\\
0\\
0\\
0\\
0\\
0\\
0\\
2(R-R^{-1})\\
17(R+R^{-1})\\
60(R-R^{-1})\\
110(R+R^{-1})\\
98(R-R^{-1})\\
0\\
\vdots
\end{smallmatrix}
\right)Q_1Q_2+\cdots \, ,
$$
where
$Q_1=e^{i\pi \tau_1}$, $Q_2=e^{i\pi \tau_2}$, $R=e^{i\pi \tau_{12}}$.
The action of $\iota=
(\begin{smallmatrix} a & 0 \\ 0 & d\\ \end{smallmatrix})\in \Gamma_2$ with
$a=d= (\begin{smallmatrix} 0 & 1 \\ 1 & 0\\ \end{smallmatrix})$
implies that the $i$th coordinate is equal to  $(-1)^{k+1}$ times 
the $(j+1-i)$th coordinate, which gives the non-displayed coordinates .
A Hecke eigenbasis of the space
$S_{24,2}(\Gamma_2,\epsilon)$ is:
\begin{align*}
F_1=&\,439 \chi_{24,2}^{(1)} +(114847+650\sqrt{106705})\, \chi_{24,2}^{(2)}\\
F_2=& \, 439\chi_{24,2}^{(1)} +(114847-650\sqrt{106705})\, \chi_{24,2}^{(2)}
\end{align*}
with eigenvalues

\begin{footnotesize}
\smallskip
\vbox{
\bigskip\centerline{\def\quad{\hskip 0.6em\relax}
\def\quod{\hskip 0.5em\relax }
\vbox{\offinterlineskip
\hrule
\halign{&\vrule#&\strut\quod\hfil#\quad\cr
height2pt&\omit&&\omit &&\omit &\cr
& $p$ &&
$\lambda_p(F_1)$  && $\lambda_{p^2}(F_1)$ &\cr
\noalign{\hrule}
height2pt&\omit&&\omit &&\omit &\cr
& 3 && $287880-4800\sqrt{106705}$ && $545747143689-2293459200\sqrt{106705}$ &\cr
& 5 && $711981900+1555200\sqrt{106705}$ && -- &\cr
& 7 && $-41070905840+92534400\sqrt{106705}$  && -- &\cr
& 11 && $ -10344705071976 + 4819953600\sqrt{106705}$  && --  &\cr
} \hrule}
}}
\end{footnotesize}

\noindent
in perfect agreement with the eigenforms being Yoshida lifts. Indeed
a basis of the space $S_{26}(\Gamma_0[2])^{\text{new}}$ is given by
\begin{align*}
f&=q - 4096\, q^2 + 97956\, q^3 + 16777216\, q^4 + 341005350\, q^5
 - 401227776\, q^6 +\cdots\\
g&=q + 4096\, q^2 + (2048-a/2)\, q^3 + 16777216\, q^4 + 
(431848374+162\, a)\, q^5 +\cdots \\
g'&=q + 4096\, q^2 + (2048+a/2)\, q^3 + 16777216\, q^4 + (431848374-162\, a)\, q^5 +\cdots \, ,
\end{align*}
where $ a=-375752+9600\sqrt{106705} $,  
and 
$f,g' \in S^{-}_{26}$ and $g \in S_{26}^{+}$. 
Then we check for example that
\begin{align*}
\lambda_5(F_1)&=711981900+1555200\sqrt{106705}=a_5(f)+a_5(g)\\
\lambda_5(F_2)&=711981900-1555200\sqrt{106705}=a_5(f)+a_5(g').
\end{align*}

\begin{proposition}
One has  
$\dim S_{26,2}(\Gamma_2,\epsilon)=1=\dim S_{28,2}(\Gamma_2,\epsilon)$ 
and  $\dim S_{30,2}(\Gamma_2,\epsilon)=2$.
\end{proposition}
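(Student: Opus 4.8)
The plan is to handle $j=26,28,30$ uniformly by the method of the preceding propositions. Multiplication by $\chi_5$ gives an injection $S_{j,2}(\Gamma_2,\epsilon)\hookrightarrow S_{j,7}(\Gamma_2)$, and since $\chi_5$ (the product of the ten even theta characteristics) vanishes to first order along the diagonal $\mathfrak{H}_1\times\mathfrak{H}_1$, its image is exactly the set of forms in $S_{j,7}(\Gamma_2)$ whose pullback to the diagonal vanishes. Hence $\dim S_{j,2}(\Gamma_2,\epsilon)$ equals $\dim S_{j,7}(\Gamma_2)$ minus the rank of the restriction map $\rho$ to $\mathfrak{H}_1\times\mathfrak{H}_1$, and it suffices to produce $S_{j,7}(\Gamma_2)$ explicitly and to compute the kernel of $\rho$.

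First I would build $S_{j,7}(\Gamma_2)$ with covariants as in Section~\ref{covariants}. For each $j$ I would use the covariants of degree $(j/2-3,\,j)$, that is those realizing $A[2j-9,\,j-9]$ in $\Sym^{j/2-3}(\Sym^6(V))$; under $\mu$ these yield cusp forms of weight $(j,\,6(j/2-3)-j/2)=(j,\,5j/2-18)$ and character $\epsilon^{j/2-3}$, so of weight $(26,47)$, $(28,52)$ and $(30,57)$ respectively. Selecting the subspace vanishing to order $\geq j/2-5$ along the diagonal and dividing by $\chi_5^{j/2-5}$ lands the result in $S_{j,7}(\Gamma_2)$; I would then check that the forms obtained are linearly independent and fill out $S_{j,7}(\Gamma_2)$, whose dimensions are $6,7,9$ by the dimension formulas of Tsushima and Bergstr\"om valid for $k\geq 3$.

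Next I would evaluate $\rho$ on this basis. The pullback of a form in $S_{j,7}(\Gamma_2)$ lands in $\bigoplus_l S_{j+7-l}(\Gamma_1)\otimes S_{7+l}(\Gamma_1)$, and after using the involution $\iota$ the target reduces to $\bigoplus_{i=0}^{j/2-1}S_{j+7-i}(\Gamma_1)\otimes S_{7+i}(\Gamma_1)\oplus\wedge^2 S_{j/2+7}(\Gamma_1)$. Because $\dim S_{20}(\Gamma_1)=\dim S_{22}(\Gamma_1)=1$ and $S_{21}(\Gamma_1)=0$, the $\wedge^2$-term vanishes in all three cases, and only the summands with both tensor factors nonzero survive, giving target dimensions $5$, $6$ and $7$. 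Reading off the leading Fourier coefficients of the constructed basis, writing each nonzero component as a combination of $\Delta\otimes\Delta$, $e_4\Delta\otimes\Delta$, \dots in the relevant $S_k(\Gamma_1)\otimes S_{k'}(\Gamma_1)$, and solving the linear system, I expect $\rho$ to be surjective, so that $\dim\ker\rho=1,1,2$. Dividing the kernel elements by $\chi_5$ gives the asserted generators of $S_{j,2}(\Gamma_2,\epsilon)$.

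The hard part is the size of the computation rather than any new idea. For $j=30$ one works in $\Sym^{12}(\Sym^6(V))$, where $A[51,21]$ occurs with large multiplicity and the defining covariants carry very large coefficients, as already seen for $j=24$; both the selection of the order-of-vanishing subspaces and the verification that $\rho$ attains full rank require computing and manipulating a substantial number of Fourier coefficients. One must also confirm that the order of vanishing along $\mathfrak{H}_1\times\mathfrak{H}_1$ is exactly $j/2-4$ on the kernel, so that dividing by $\chi_5^{j/2-4}$ remains holomorphic; this is the step where $j=30$ sits at the practical limit noted in the introduction.
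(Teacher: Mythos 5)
Your proposal is correct and follows essentially the same route as the paper: the paper also constructs $S_{j,7}(\Gamma_2)$ for $j=26,28,30$ from the covariants realizing $A[43,17]$, $A[47,19]$, $A[51,21]$ in ${\rm Sym}^{10}$, ${\rm Sym}^{11}$, ${\rm Sym}^{12}$ of ${\rm Sym}^6(V)$ (giving forms of weight $(26,47)$, $(28,52)$, $(30,57)$, divided by $\chi_5^{8}$, $\chi_5^{9}$, $\chi_5^{10}$), and then determines by restriction to the diagonal that the subspace of $S_{j,7}(\Gamma_2)$ divisible by $\chi_5$ has dimension $1$, $1$, $2$. Your additional bookkeeping (target dimensions $5$, $6$, $7$ and surjectivity of $\rho$) is the same linear-algebra computation, merely organized as a rank count rather than a direct kernel count.
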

\begin{proof}
The proof of this proposition is similar to the above. For the first statement
we consider the 
space $S_{26,7}(\Gamma_2)$ 
which has dimension $6$ and construct a basis of this space
using covariants associated to $A[43,17]$ in ${\rm Sym}^{10}({\rm Sym}^6(V))$
which occurs with multiplicity $17$, thus giving rise to 
a $17$-dimensional subspace
of $S_{26,47}(\Gamma_2)$. By restricting along the diagonal one 
checks that there is a $6$-dimensional 
subspace of cusp forms divisible by $\chi_5^8$ leading to the construction of
$S_{26,7}(\Gamma_2)$. 
Again by restricting to the diagonal one sees that there is exactly a 
$1$-dimensional subspace of this space that vanish along the diagonal.
By dividing by $\chi_5$ we thus find the space $S_{26,2}(\Gamma_2,\epsilon)$.

For weight $(28,2)$
we  now use the representation  $A[47,19]$ that occurs 
with multiplicity $23$ in ${\rm Sym}^{11}({\rm Sym}^6(V))$ 
and leading to a $23$-dimensional
subspace of $S_{28,52}(\Gamma_2)$ in which we find a $7$-dimensional subspace
of forms divisible by $\chi_5^9$ and division gives forms that
generate  $S_{28,7}(\Gamma_2)$. In this space the subspace of forms divisible
by $\chi_5$ is of dimension $1$, proving our claim. 

In the case of weight $(30,7)$ 
the $9$-dimensional space $S_{30,7}$ is constructed using covariants
resulting from $A[51,21]$ that occurs with multiplicity $31$ 
in ${\rm Sym}^{12}({\rm Sym}^6(V))$ leading to a space of dimension $31$ of
cusp forms of weight $(30,57)$. There is a $9$-dimensional subspace 
of cusp forms divisible by $\chi_5^{10}$ and we thus generate $S_{30,7}(\Gamma_2)$. It turns out that there is a $2$-dimensional subspace of forms divisible
by $\chi_5$ and this proves the claim. 
\end{proof}

For all the cases treated we can check our construction by verifying  
that the Hecke eigenvalues
for $p=3,5,7,11,13, 17$ agree with the forms being Yoshida lifts 
like we indicated for $j=12$ and $j=24$.

In a forthcoming paper (\cite{C-F-vdG2}) we shall use the relation with 
covariants to describe modules of forms with a character.
\end{section}
\begin{section}{Modular Forms of Weight $(j,2)$ on $\Gamma_2$}
In this section we explain how we checked that $S_{j,2}(\Gamma_2)=(0)$
for $j\leq 52$.
We begin with a simple lemma. Recall that we have maps 
$M \to {\mathcal C} {\buildrel \mu \over \longrightarrow} M$.

\begin{lemma}
Let $f \in M_{j,k}(\Gamma_2)$. Then there exists a covariant $c_f$ of degree
$(d,j)$ with $d\leq j/2+k$ such that $f=\nu(c_f)=\mu(c_f)/\chi_5^d$.  
If $f$ is a cusp form then there is a covariant $c_f^{\prime}$ 
of degree $\leq j/2+k-10$
such that $f=\mu(c_f^{\prime})/\chi_5^r$ for some $r$.
\end{lemma}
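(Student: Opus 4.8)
The plan is to separate the two assertions and to reduce everything to a single bookkeeping identity relating $\mu$ and $\nu$. Since both are the substitution homomorphisms that send the universal sextic $f$ to $\chi_{6,3}/\chi_5$ and to $\chi_{6,3}$ respectively, and a covariant $c$ of degree $(a,b)$ is by definition homogeneous of degree $a$ in the coefficients $a_i$, each of the $a$ occurrences of the sextic contributes one extra factor $\chi_5$ when one passes from $\nu$ to $\mu$. Thus
$$
\mu(c)=\chi_5^{\,a}\,\nu(c)\qquad\text{for every covariant $c$ of degree $(a,b)$,}
$$
and this is the only translation between the two normalisations that I will need.

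For the first statement I let $c_f$ be the covariant attached to $f$ by the map $M\to\mathcal{C}$ of \cite{C-F-vdG}; by construction it has bidegree $(d,j)$ with $d=j/2+k$ and satisfies $\nu(c_f)=f$. The weight rule, that a covariant of degree $(a,j)$ has $\nu$-image of weight $(j,a-j/2)$, forces $d=j/2+k$, so the bound $d\le j/2+k$ holds with equality. Feeding this into the displayed identity gives $f=\nu(c_f)=\mu(c_f)/\chi_5^{\,d}$, which is the claim.

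The content is the cusp-form refinement, and here the device is to divide by the scalar discriminant $\chi_{10}=\chi_5^{2}$. Recall that $\chi_{10}=\nu(C_{\chi_{10}})$, where $C_{\chi_{10}}$ is the covariant of degree $(10,0)$ described above. I would set $g:=f/\chi_{10}\in M_{\chi_{10}}$, a meromorphic form of weight $(j,k-10)$, and argue that $g$ is the $\nu$-image of an \emph{honest} covariant $c_f'$ of bidegree $(j/2+k-10,\,j)$. Granting this, the weight rule pins the degree to $j/2+k-10$, and
$$
f=\chi_{10}\,\nu(c_f')=\chi_5^{2}\cdot\mu(c_f')/\chi_5^{\,j/2+k-10}=\mu(c_f')/\chi_5^{\,j/2+k-12},
$$
so that $r=j/2+k-12$ and $c_f'$ has degree $j/2+k-10$, satisfying the required bound.

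The hard point is precisely the claim that $g=f/\chi_{10}$ is realised by a polynomial covariant of the stated bidegree, and not merely by a covariant localised at the discriminant. Note that a cusp form need \emph{not} be divisible by $\chi_{10}$: its restriction to $\mathfrak{H}_1\times\mathfrak{H}_1$ is in general nonzero (as the computations for $\chi_{14,7}$ in Section~5 show), so $g$ genuinely acquires a pole along the diagonal. The way I would close the gap is to invoke the surjectivity statement of \cite{C-F-vdG}: the $\nu$-image of the covariants of bidegree $(a,j)$ is exactly the space of weight-$(j,a-j/2)$ forms that are holomorphic on the Jacobian locus and have a pole of order at most $a$ along the diagonal $\mathfrak{H}_1\times\mathfrak{H}_1$, along which $\chi_5$ vanishes simply. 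Since $g=f/\chi_5^{2}$ then has pole order at most $2$ there, it fits inside the budget $a=j/2+k-10$ in the range where nonzero cusp forms occur. Verifying this comparison of pole orders, together with the surjectivity input on which it rests, is the only step that is not pure weight bookkeeping; everything else follows formally from the identity $\mu(c)=\chi_5^{\,a}\nu(c)$ and the weight rule.
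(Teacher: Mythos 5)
Your first statement and all the weight bookkeeping (the identity $\mu(c)=\chi_5^{a}\nu(c)$ for a covariant of degree $(a,b)$, and the computation $r=j/2+k-12$) are fine and agree with the paper, which indeed just quotes \cite{C-F-vdG} for the first part. The gap is exactly at the step you flag as the hard point, and the fix you propose does not work: the ``surjectivity statement'' you invoke --- that the $\nu$-image of the covariants of bidegree $(a,j)$ is \emph{exactly} the space of weight $(j,a-j/2)$ forms holomorphic on the Jacobian locus with pole of order at most $a$ along the diagonal --- is false, because it ignores the behaviour at infinity (the Satake boundary). The symptom is that your argument never uses that $f$ is a cusp form; cuspidality enters only through the weight inequality $j/2+k-10\geq 2$. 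If the argument were valid it would therefore apply to $f=E_4^{10}\in M_{0,40}(\Gamma_2)$: here $g=E_4^{10}/\chi_{10}$ is holomorphic on the Jacobian locus with pole of order at most $2$ along the diagonal, and the budget $a=j/2+k-10=30\geq 2$ is met, so you would obtain a covariant $c'$ of degree $(30,0)$ with $\mu(c')=\chi_5^{30}g=\chi_5^{28}E_4^{10}$. But by Remark \ref{orderinfinity} the form $\mu(c')$ must vanish at infinity to order $\geq 30$, whereas $\chi_5^{28}E_4^{10}$ vanishes there to order exactly $28$ (the Siegel operator does not kill $E_4$). So no such covariant exists, and the characterization of the image of $\nu$ that your proof rests on cannot be correct: membership in $\nu(\mathcal{C}_{(a,j)})$ is \emph{not} detected by pole orders along the diagonal alone.

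The missing idea --- which is the paper's actual proof --- works on the covariant side and is where cuspidality genuinely enters: the discriminant locus of binary sextics (sextics with a multiple root) corresponds to the rank-one Satake boundary, since such sextics give nodal curves whose generalized Jacobians lie over $\mathcal{A}_1$; hence the covariant $c_f$ of a \emph{cusp} form vanishes on the discriminant locus. The discriminant $D$ being an irreducible polynomial of degree $(10,0)$, one can factor $c_f=D\,c_f'$ with $c_f'$ of degree $(d-10,j)$, $d-10\leq j/2+k-10$, and since $\mu(D)$ is a constant times $\chi_{10}^{6}=\chi_5^{12}$ (equivalently $\nu(D)$ is a constant times $\chi_{10}$), one gets $f=\nu(c_f)=\nu(D)\nu(c_f')=\mu(c_f')/\chi_5^{d-12}$, which is the claim with $r=d-12$. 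In other words, the condition singling out cusp forms is divisibility of the covariant by the discriminant --- a condition at the boundary --- not a condition on pole orders along $\mathfrak{H}_1\times\mathfrak{H}_1$. Any repair of your route must reinstate a vanishing-at-infinity requirement in the characterization of the image of $\nu$ and then verify it for $\chi_5^{a-2}f$ using cuspidality; on the covariant side that order condition is precisely divisibility by $D$, so the repaired argument collapses back into the paper's.
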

\begin{proof}
The first statement follows directly from \cite{C-F-vdG}. If $f$ is a cusp
form then the covariant it defines vanishes on the discriminant locus.
But then the covariant $c_f$ is divisible by the discriminant, and $\mu(c_f)$ by $\chi_5^{d+2}$.
\end{proof}

This makes it possible to check the existence of a non-zero form $f \in
S_{j,2}(\Gamma_2)$ by checking whether the forms of weight 
$(j,2+5d)$ provided via $\mu$ by the 
non-zero covariants of degree $(d,j)$ with $d\leq j/2-8$ are 
divisible by $\chi_5^{d}$. We applied this for values of $j\leq 52$ 
using the covariants of degree $d\leq 18$.
For smaller values of $j$ other methods of showing that $S_{j,2}(\Gamma_2)=(0)$
are available.
We sketch some methods below. In this way we checked that
$S_{j,2}(\Gamma_2)=(0)$ for $j\leq 52$.

\medskip

Another method is to construct a basis of $S_{j,7}(\Gamma_2,\epsilon)$ 
by using covariants.  
We then check the divisibility by
$\chi_5$ of elements in $S_{j,7}(\Gamma_2,\epsilon)$ 
by restricting the modular forms in this space to
the diagonal.

As an illustration we give the proof for the case $j=24$.
We construct a basis of the $9$-dimensional space
$S_{24,7}(\Gamma_2,\epsilon)$.
For this we use the
covariants associated to the $A[54,30]$-isotypical component of  ${\rm Sym}^{14}({\rm Sym}^6(V))$. The representation $A[54,30]$ occurs with multiplicity $65$
and leads to a $65$-dimensional subspace of modular forms of weight $(24,72)$ on
$\Gamma_2$. By restricting to $\mathfrak{H}_1 \times \mathfrak{H}_1$ we
can check that there exists a $9$-dimensional subspace of cusp forms
that are divisible by $\chi_5^{13}$.
This leads to a basis of $S_{24,7}(\Gamma_2,\epsilon)$.
We then check by restriction to $\mathfrak{H}_1 \times \mathfrak{H}_1$ again
that there is no non-trivial element in this
space that is divisible by $\chi_5$. This proves the result for $j=24$.

We carried this out for all the cases $j\leq 52$ and thus proved 
Theorem \ref{vanishingSj2level1}.
\medskip

Sometimes there are other and easier ways to eliminate cases. 
For example, by restricting a modular form of weight $(j,2)$ to the diagonal
we get an element $\sum_{i=0}^{j/2} f_i \otimes f_i'
\in \oplus_{i=0}^{j/2} S_{j+2-i}(\Gamma_1)\otimes S_{2+i}(\Gamma_1)$.
If $j<24$, $j\neq 20$  the spaces in question are zero. 
Therefore a form $f\in S_{j,2}(\Gamma_2)$ will vanish on 
$\mathfrak{H}_1 \times \mathfrak{H}_1$. But then
$f/\chi_5$ will be a holomorphic modular form of weight $(j,-3)$ and this has
to be zero. So $f=0$ for $j\leq 18$ and $j=22$.

As yet another example of eliminating cases we give 
a somewhat different argument for $j=26$. 
We write elements of $F\in S_{j,k}(\Gamma_2)$ as vectors
$F=(F_0,\ldots,F_j)^t$ with the $F_i$ holomorphic functions on $\mathcal{H}_2$,
that is, in a  module of rank $27$ over the ring ${\mathcal F}$
of holomorphic functions on $\mathcal{H}_2$.
Take a basis $s_1,\ldots,s_3$ of $S_{26,6}(\Gamma_2)$ and a basis 
$s_{4},\ldots,s_{12}$ of $S_{26,8}(\Gamma_2)$. 
If there exists a non-zero form $f$ of weight $(26,2)$
then the vectors $E_4\, f$ and $E_6\, f$ are linearly dependent and thus
the exterior product $s_1\wedge \cdots \wedge s_{12}$ must vanish.
By calculating bases of $S_{26,6}(\Gamma_2)$ and $S_{26,8}(\Gamma_2)$
one can check that this exterior product does not vanish. 
So $S_{26,2}(\Gamma_2)=(0)$.   

\end{section}

\begin{section}{Other Small Weights}

We begin with an elementary argument that shows that $S_{j,k}(\Gamma_2[2])=(0)$ 
for $j\leq 8$ and $k\leq 2$.

\begin{proposition}\label{vanishsj812}
For $j\leq 8$ and $k\leq 2$ we have $\dim S_{j,k}(\Gamma_2[2])=0$.
\end{proposition}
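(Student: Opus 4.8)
The plan is to restrict cusp forms to the diagonal $i(\mathfrak{H}_1\times\mathfrak{H}_1)$ and to play the vanishing $S_m(\Gamma_1[2])=0$ for $m\le 5$ (immediate from the generating series for $a_m,b_m,c_m$ in the Introduction: the first nonzero space is $S_6(\Gamma_1[2])=\CC\,\delta$) against the fact that $\chi_5$, of weight $(0,5)$, vanishes to first order along the diagonal. The mechanism is: if $F\in S_{j,k}(\Gamma_2[2])$ vanishes along every component of the zero locus of $\chi_5$, then $F/\chi_5\in M_{j,k-5}(\Gamma_2[2])$ is holomorphic, and since $M_{j,k'}(\Gamma_2[2])=0$ for $k'<0$ this forces $F=0$ whenever $k<5$.

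First I would dispose of all pairs $(j,k)$ with $j\le 8$, $k\le 2$ and $j+2k<12$. For such $F$ the pullback to $\mathfrak{H}_1\times\mathfrak{H}_1$ has components $f_l\in S_{j+k-l}(\Gamma_1[2])\otimes S_{k+l}(\Gamma_1[2])$, and each such tensor product is nonzero only if both weights are $\ge 6$, i.e.\ only if $j+2k=(j+k-l)+(k+l)\ge 12$. Hence all $f_l$ vanish, and the identical bound applies to every translate $F|_\gamma$ ($\gamma\in\Gamma_2$), since $\Gamma_2[2]\triangleleft\Gamma_2$ keeps $F|_\gamma$ in $S_{j,k}(\Gamma_2[2])$. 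Thus $F$ vanishes along all ten components of the zero locus of $\chi_5$ (the $\Gamma_2$-translates of the diagonal), and the mechanism above gives $F=0$. This settles everything except the boundary value $j+2k=12$, that is $(j,k)=(8,2)$.

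The case $(8,2)$ is the heart of the matter. Now each $f_l$ has total weight $12$, so only $l=4$ survives, with $f_4\in\mathrm{Sym}^2S_6(\Gamma_1[2])=\CC\,(\delta\otimes\delta)$ (symmetric square because $k$ is even), and the restriction to a single diagonal may be nonzero. I would therefore assemble the restrictions to all ten components of the zero locus of $\chi_5$ into one $\mathfrak{S}_6$-equivariant map $R$. Its kernel is exactly the $\chi_5$-divisible forms $\chi_5\cdot M_{8,-3}(\Gamma_2[2])=0$, so $R$ is injective, and its target is the induced module $\mathrm{Ind}_H^{\mathfrak{S}_6}\chi_0$, where $H=(\mathfrak{S}_3\times\mathfrak{S}_3)\rtimes\langle\iota\rangle$ is the stabiliser of the diagonal and $\chi_0$ is its action on $\CC\,(\delta\otimes\delta)$ (each $\mathfrak{S}_3=\mathrm{SL}(2,\ZZ/2)$ acting through the sign, as $\delta\in S_6(\Gamma_1[2])^{s[1^3]}$, and $\iota$ acting trivially since $k$ is even). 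Writing $\chi_0=(\mathrm{sgn}_{\mathfrak{S}_6}|_H)\cdot\zeta$ with $\zeta$ the character of $H$ detecting $\iota$, the projection formula gives $\mathrm{Ind}_H^{\mathfrak{S}_6}\chi_0=\mathrm{sgn}_{\mathfrak{S}_6}\otimes\mathrm{Ind}_H^{\mathfrak{S}_6}\zeta$; here $\mathrm{Ind}_H^{\mathfrak{S}_6}\zeta$ is the complementation-anti-invariant part $M^-$ of the permutation module on the $3$-subsets of $\{1,\dots,6\}$, which a short computation identifies as $s[5,1]\oplus s[3,3]$. Hence $\mathrm{Ind}_H^{\mathfrak{S}_6}\chi_0=s[2,1^4]\oplus s[2^3]$, so $S_{8,2}(\Gamma_2[2])$ can only carry the types $[2,1^4]$ and $[2^3]$, each at most once; the types $[6]$ and $[1^6]$ drop out automatically.

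It remains to kill the $[2,1^4]$- and $[2^3]$-parts, and this is the step where I expect to leave the purely elementary argument. Since $\chi_5$ carries the character $\epsilon=s[1^6]$, multiplication by it gives injections $S_{8,2}(\Gamma_2[2])^{s[2,1^4]}\hookrightarrow S_{8,7}(\Gamma_2[2])^{s[5,1]}$ and $S_{8,2}(\Gamma_2[2])^{s[2^3]}\hookrightarrow S_{8,7}(\Gamma_2[2])^{s[3,3]}$ onto the $\chi_5$-divisible subspaces. As $7\ge 3$, the dimensions of these weight-$(8,7)$ spaces are known from the formulas of Tsushima and Bergstr\"om; I would build bases via the covariants of Section~\ref{covariants} and, by restricting to the diagonal, verify that no nonzero element of either space is divisible by $\chi_5$. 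This finite weight-$7$ divisibility check is the genuine obstacle: it is exactly the ingredient that at the boundary weight replaces the elementary division by $\chi_5$ used in all the other cases.
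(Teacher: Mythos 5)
Your treatment of every case except $(j,k)=(8,2)$ is exactly the paper's argument: each component of the restriction lies in $S_a(\Gamma_1[2])\otimes S_b(\Gamma_1[2])$ with $a+b=j+2k<12$, so one factor has weight $\le 5$ and vanishes; hence the form vanishes on all ten components of $H_1$, is divisible by $\chi_5$, and lands in $M_{j,k-5}(\Gamma_2[2])=0$. Your setup for the boundary case also coincides with the paper's (the injective $\mathfrak{S}_6$-equivariant map into $\oplus_{i=1}^{10}{\rm Sym}^2S_6(\Gamma_1[2])$). Where you diverge is the identification of that $10$-dimensional representation: you get $s[2,1^4]\oplus s[2^3]$, while the paper asserts $s[6]+s[4,2]$. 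Here your computation is the correct one under the paper's own conventions: since $\delta$ has type $s[1^3]$, transpositions inside a block act by $-1$ on $\delta\otimes\delta$, so the character $\chi_0$ of the stabilizer $H$ is nontrivial and by Frobenius reciprocity $s[6]$ cannot occur in $\mathrm{Ind}_H^{\mathfrak{S}_6}\chi_0$ at all; your factorization $\chi_0=(\mathrm{sgn}|_H)\cdot\zeta$ with $\mathrm{Ind}_H^{\mathfrak{S}_6}\zeta=s[5,1]\oplus s[3,3]$ then gives $s[2,1^4]\oplus s[2^3]$. So the paper's decomposition (and consequently its endgame, which kills $s[6]$ via $S_{8,2}(\Gamma_2)=0$ and $s[4,2]$ via descent to $\Gamma_0[2]$) is calibrated to types that in fact do not occur in the target.

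The genuine gap is your final step. The whole content of the case $(8,2)$ is precisely that the $s[2,1^4]$- and $s[2^3]$-parts vanish, and at that point you give no argument: you only announce a computation (build $S_{8,7}(\Gamma_2[2])^{s[5,1]}$ and $S_{8,7}(\Gamma_2[2])^{s[3,3]}$ from covariants and check that no nonzero element is divisible by $\chi_5$). Nothing you have established predicts the outcome of that check --- these weight-$(8,7)$ spaces are nonzero, so the verification could a priori go either way, and until it is executed the proposition is not proved. By contrast, the paper closes its boundary case by structural inputs only: the known vanishing $S_{8,2}(\Gamma_2)=0$, and, for the remaining type, the existence of invariants under the image of $\Gamma_0[2]$ in $\mathfrak{S}_6$, which converts a hypothetical nonzero isotypical form into a nonzero element of $S_{8,2}(\Gamma_0[2])$ whose diagonal restriction lies in $\oplus_r S_{10-r}(\Gamma_0(2))\otimes S_{2+r}(\Gamma_0(2))=0$ because $S_m(\Gamma_0(2))=0$ for $m\le 7$. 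If you want to finish without a computation, note that of your two surviving types, $s[2^3]$ does have a nonzero vector fixed by the image of $\Gamma_0[2]$ (the stabilizer of a Lagrangian mod $2$), so a descent of the paper's kind is available for it, whereas $s[2,1^4]$ has no such invariants, so for that piece you would need either a descent twisted by a character of that subgroup or, indeed, the explicit weight-$7$ divisibility check you propose.
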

\begin{proof}
We need to deal with the cases $j$ even and $k=1$ and $k=2$ only since for other values
$S_{j,k}(\Gamma_2[2])$ vanishes.
We restrict to the ten components of the Humbert surface 
$H_1$ in $\Gamma_2[2]\backslash \mathfrak{H}_2$, one component of which is given by the
diagonal $\tau_{12}=0$. The group $\mathfrak{S}_6$ acts transitively on these ten components.
The stabilizer inside $\mathfrak{S}_6$ of a component of $H_1$ is
an extension of $\mathfrak{S}_3\times \mathfrak{S}_3$ by ${\ZZ}/2{\ZZ}$.

By restricting a modular form $f \in S_{j,1}(\Gamma_2[2])$ 
to a component we get an element of 
$$
\oplus_{r=0}^{j} S_{j+1-r}(\Gamma_1[2])\otimes S_{1+r}(\Gamma_1[2]) 
$$
and for $f\in S_{j,2}(\Gamma_2[2])$ we get an element of
$$
\oplus_{r=0}^{j} S_{j+2-r}(\Gamma_1[2])\otimes S_{2+r}(\Gamma_1[2])  \, .
$$
For $j\leq 8$ and $k=1$ and for $j<8$ and $k=2$ these spaces are zero. 
Thus a form $f\in S_{j,2}(\Gamma_2[2])$ restricts to zero
on all irreducible components of $H_1$, hence is divisible by $\chi_{5}$, 
and so $f$ must be zero.
For $j=8$ and $k=2$ the restriction to $H_1$ gives an injective 
$\mathfrak{S}_6$-equivariant map
$$
S_{8,2}(\Gamma_2[2]) \to \oplus_{i=1}^{10} 
\, {\rm Sym}^2  S_6(\Gamma_1[2]) \, ,
$$
where the action on the right is the induced representation from the extension of
$\mathfrak{S}_3 \times \mathfrak{S}_3$ by ${\ZZ}/2{\ZZ}$ to $\mathfrak{S}_6$.
Now $S_6(\Gamma_1[2])$ is 1-dimensional and of type $s[1^3]$ 
and we check that the representation of $\mathfrak{S}_6$ 
on the 10-dimensional space  
$\oplus_{i=1}^{10} \Sym^2 S_6(\Gamma_1[2])$
is of type s[6]+s[4,2].  
Since $S_{8,2}(\Gamma_2)=(0)$, we conclude that only $S_{8,2}(\Gamma_2[2])^{s[4,2]}$ 
can be non-zero.  If $S_{8,2}(\Gamma_2[2])^{s[4,2]}$ is non-zero, 
then  $S_{8,2}(\Gamma_0[2])$ is non-zero (see \cite[Section 9]{C-vdG-G}). 
By restricting we get an element in a similar decomposition as before but with 
$\Gamma_1[2]$ replaced by $\Gamma_0[2]$.  As we know that all theses spaces are zero, 
we can divide by $\chi_5$.
This contradiction concludes our claim. 
\end{proof}

More generally we have
\begin{proposition}
For $j<12$ we have $S_{j,2}(\Gamma_2[2])=(0)$.
\end{proposition}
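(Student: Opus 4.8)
The plan is as follows. First I would reduce to the single case $j=10$. Since $-I\in\Gamma_2[2]$ acts on a form of weight $(j,2)$ by the scalar $(-1)^j$, the space $S_{j,2}(\Gamma_2[2])$ vanishes whenever $j$ is odd; and for the even values $j\le 8$ the vanishing is exactly Proposition~\ref{vanishsj812}. Thus everything comes down to proving $S_{10,2}(\Gamma_2[2])=(0)$.

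For this I would follow the proof of Proposition~\ref{vanishsj812} and restrict to the ten components of the Humbert surface $H_1$. The restriction of $f\in S_{10,2}(\Gamma_2[2])$ to one component is a tuple $(f_0,\dots,f_{10})$ with $f_l\in S_{12-l}(\Gamma_1[2])\otimes S_{2+l}(\Gamma_1[2])$. Since $-I\in\Gamma_1[2]$ kills odd weights, and since the generating series recalled in the Introduction give $S_2(\Gamma_1[2])=S_4(\Gamma_1[2])=(0)$, the only slots that can survive are $l=4$ and $l=6$, namely $S_8(\Gamma_1[2])\otimes S_6(\Gamma_1[2])$ and $S_6(\Gamma_1[2])\otimes S_8(\Gamma_1[2])$; moreover $S_6(\Gamma_1[2])=s[1^3]$ and $S_8(\Gamma_1[2])=s[2,1]$ as $\mathfrak{S}_3$-representations. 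In contrast to the middle-term situation of the case $j=8$, here the element of the stabilizer $H\cong\mathfrak{S}_3\wr\mathfrak{S}_2$ that swaps the two $\mathfrak{S}_3$-factors interchanges the two surviving slots, so the restriction at one component takes values in the full $H$-module $\mathrm{Ind}_{\mathfrak{S}_3\times\mathfrak{S}_3}^{H}(s[2,1]\boxtimes s[1^3])$.

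Assembling the ten components, the total restriction is an $\mathfrak{S}_6$-equivariant map into $\mathrm{Ind}_H^{\mathfrak{S}_6}$ of this module, which by transitivity of induction is $\mathrm{Ind}_{\mathfrak{S}_3\times\mathfrak{S}_3}^{\mathfrak{S}_6}(s[2,1]\boxtimes s[1^3])$. The map is injective, since a form vanishing on all of $H_1$ is divisible by $\chi_5$ and the quotient would lie in $M_{10,-3}(\Gamma_2[2])=(0)$. A Littlewood--Richardson computation of the product $s_{[2,1]}\cdot s_{[1^3]}$ then gives
$$
\mathrm{Ind}_{\mathfrak{S}_3\times\mathfrak{S}_3}^{\mathfrak{S}_6}(s[2,1]\boxtimes s[1^3])
=s[3,2,1]\oplus s[3,1^3]\oplus s[2^2,1^2]\oplus s[2,1^4],
$$
each constituent with multiplicity one (the dimensions check out: $16+10+9+5=40$). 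Hence $S_{10,2}(\Gamma_2[2])$ can contain only these four isotypical types, each at most once.

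It remains to kill the four types, and this is where the real work lies. Multiplication by the Igusa form $E_4\in M_{0,4}(\Gamma_2)^{s[6]}$ is injective and preserves the isotypical type, so it embeds $S_{10,2}(\Gamma_2[2])^{s[\varpi]}$ into $S_{10,6}(\Gamma_2[2])^{s[\varpi]}$. As $6\ge 3$, the isotypical dimensions of $S_{10,6}(\Gamma_2[2])$ are supplied by Bergstr\"om's tables, and I would conclude by verifying that $S_{10,6}(\Gamma_2[2])^{s[\varpi]}=(0)$ for each of the four partitions $\varpi\in\{[3,2,1],[3,1^3],[2^2,1^2],[2,1^4]\}$. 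The main obstacle is precisely this final step: the restriction to $H_1$ only confines $f$ to the four types above and does not by itself force vanishing, so one genuinely needs the dimension data in weight $(10,6)$ — or, in the spirit of Proposition~\ref{vanishsj812}, a finer restriction to the intermediate groups $\Gamma_0[2]$ combined with the vanishing of the relevant scalar spaces and division by $\chi_5$ — in order to eliminate each surviving type.
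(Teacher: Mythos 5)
Your reduction to the single case $j=10$ (odd $j$ by $-I$, even $j\leq 8$ by Proposition~\ref{vanishsj812}), the restriction analysis on the ten components of $H_1$, the identification of the receptacle as $\mathrm{Ind}_{\mathfrak{S}_3\times\mathfrak{S}_3}^{\mathfrak{S}_6}(s[2,1]\boxtimes s[1^3])=s[3,2,1]\oplus s[3,1^3]\oplus s[2^2,1^2]\oplus s[2,1^4]$, and the injectivity of restriction (division by $\chi_5$ would leave a form of weight $(10,-3)$) are all correct. The gap is the final step, and it is not merely that the table check is left undone: the check you propose would in fact \emph{fail} for the type $s[2,1^4]$, because $S_{10,6}(\Gamma_2[2])^{s[2,1^4]}\neq (0)$. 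Indeed, let $f$ and $g$ be the unique newforms of weights $20$ and $12$ on $\Gamma_0(4)$ (both spaces of newforms are $1$-dimensional by the generating series $\sum c_kt^k=t^6/(1-t^4)(1-t^6)$ of the Introduction). Since $20=j+2k-2$ and $12=j+2$ for $(j,k)=(10,6)$, the pair $(f,g)$ is admissible for the Yoshida construction in weight $(10,6)$; at $p=2$ both local components are the supercuspidal representation ${\rm Sc}$, the only possibility for a newform of level $\Gamma_0(4)$ (R\"osner, as recalled in the second appendix). By Weissauer's multiplicity formula, the member of the global packet of $(f,g)$ which is non-generic exactly at $\infty$ and at $2$ is cuspidal automorphic; its archimedean component is the holomorphic discrete series of weight $(10,6)$, and by case (b) of the analysis in the second appendix — a purely local statement at $p=2$, independent of the archimedean weight — its space of invariants under the principal congruence subgroup $J$ is a copy of $s[2,1^4]$. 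Hence $S_{10,6}(\Gamma_2[2])^{s[2,1^4]}$ contains $s[2,1^4]$, and the inequality $\dim S_{10,2}(\Gamma_2[2])^{s[2,1^4]}\leq \dim S_{10,6}(\Gamma_2[2])^{s[2,1^4]}$ obtained by multiplying with $E_4$ gives nothing. The same phenomenon occurs in every weight $(10,k)$ with $k\geq 3$ (pair the level-$4$ newforms of weights $8+2k$ and $12$), so no argument of the shape ``multiply into weight $k\geq 3$ and quote dimension tables'' can eliminate $s[2,1^4]$. Note that this lift is not a counterexample to the proposition itself: in weight $(10,2)$ the corresponding pair would have to consist of two distinct newforms of the \emph{same} weight $12$ on $\Gamma_0(4)$, and $\Lambda^2 S_{12}(\Gamma_0(4))^{\rm new}=(0)$, exactly as in case (2) of Theorem~\ref{YoshidaLifts}; it is precisely this coincidence of weights that is destroyed by multiplication by $E_4$.

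The paper's own proof is entirely different and avoids the issue. For $f\in S_{j,2}(\Gamma_2[2])$ defined over $\QQ$ it uses the $q$-expansion principle (applied to $f\chi_{10}$, so as to be in weight $k\geq 3$) to see that all conjugates $\sigma(f)$, $\sigma\in\mathfrak{S}_6$, have real Fourier coefficients, whence $\sum_{\sigma\in\mathfrak{S}_6}\sigma(f)^2$ is a non-zero $\mathfrak{S}_6$-invariant element of $S_{2j,4}(\Gamma_2)$ whenever $f\neq 0$; since $S_{2j,4}(\Gamma_2)=(0)$ for $2j<24$, all isotypical components die at once, using only level-one vanishing. To repair your argument you would need input of this level-one kind (or some other mechanism special to determinant weight $2$) for the single surviving type $s[2,1^4]$; the restriction to $H_1$ combined with higher-weight dimension data cannot do it.
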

\begin{proof}
The space $S_{j,2}(\Gamma_2[2])$ is defined over
${\QQ}$. All the cusps of $\Gamma_2[2]$ are defined over ${\QQ}$
and the action of $\mathfrak{S}_6$ is defined over ${\QQ}$. The q-expansion 
principle says that a modular form in $S_{j,k}(\Gamma_2[2])$ with $k\geq 3$ 
is defined over ${\QQ}$ if its Fourier coefficients at all cusps are defined over ${\QQ}$,
see \cite[Cor.\ 1.6.2, 1.12.2]{Katz} and \cite[p.\ 140]{F-C}.
We apply this to $f\chi_{10}$
with  $f \in S_{j,2}(\Gamma_2[2])$ defined
over ${\QQ}$ and we conclude that the Fourier coefficients of $\sigma(f\chi_{10})$
with $\sigma\in \mathfrak{S}_6$ are real, hence also those of $\sigma(f)$ and 
if $f\neq 0$ we find by looking at the `first' non-zero term in a Fourier expansion 
that
$$
\sum_{\sigma \in \mathfrak{S}_6} \sigma(f)^2
$$
is non-zero and because of $\sigma(f^2)=\sigma(f)^2$ also invariant under $\mathfrak{S}_6$.
Thus it defines a non-zero element of $S_{2j,4}(\Gamma_2)$. So 
$S_{j,2}(\Gamma_2[2])$ implies $S_{2j,4}(\Gamma_2)\neq (0)$. But we know
that $S_{2j,4}(\Gamma_2)=(0)$ for $j<12$.
\end{proof}

\begin{remark}
Note that the argument of the proof shows that our conjecture on the vanishing of
$S_{j,2}(\Gamma_2)$ for all $j$ implies the vanishing of $S_{j,1}(\Gamma_2[2])$ for all $j$. 
\end{remark}

\bigskip

In order to put our evidence for the vanishing of $S_{j,2}(\Gamma_2)$ in perspective
we show a small table that gives for each value of $k$ the smallest $j_0$ such
that $\dim S_{j_0,k}(\Gamma_2)\neq 0$. 

\smallskip
\vbox{
\bigskip\centerline{\def\quad{\hskip 0.6em\relax}
\def\quod{\hskip 0.5em\relax }
\vbox{\offinterlineskip
\hrule
\halign{&\vrule#&\strut\quod\hfil#\quad\cr
height2pt&\omit&&\omit &&\omit&&\omit&&\omit&&\omit&&\omit&\cr
& $k$ && $3$ && $4$ && $5$ && $6$ && $7$ && $8$ & \cr
\noalign{\hrule}
height2pt&\omit&&\omit &&\omit&&\omit&&\omit&&\omit&&\omit&\cr
& $j_0$ && $36$ && $24$ && $18$ && $12$ && $12$ && $6$ & \cr
} \hrule}
}}
We can easily construct the generators of the corresponding spaces.
In \cite{C-vdG} we constructed a generator $\chi_{6,3}$ of $S_{6,3}(\Gamma_2,\epsilon)$
and above we gave the generator $\chi_{12,2}$ of $S_{12,2}(\Gamma_2,\epsilon)$. 
The modular forms $\chi_{12,2}^2$, $\chi_{6,3}\chi_{12,2}$, $\chi_{6,3}^2$,
$\chi_5\chi_{12,2}$ and $\chi_5\chi_{6,3}$
give the generators for $k=4,\ldots,8$.
We end by constructing a generator of $S_{36,3}(\Gamma_2)$; the non-vanishing of this space
plays a role in the appendix. We look in
${\rm Sym}^{11}({\rm Sym}^6(V))$ and at $A[51,15]$ occuring with multiplicity $17$ 
there. We find the covariant
$$
297 \, C_{1,6}^2C_{3,8}^3 -8316 \, C_{1,6}C_{3,8}C_{3,12}C_{4,10}
+4116 \, C_{1,6}C_{3,12}^2C_{4,6} -5488 \, C_{2,0}C_{3,12}^3+
9030 \, C_{2,4}C_{3,8}C_{3,12}^2
$$
giving a form $f$ of weight $(36,48)$ that is divisible by $\chi_5^{9}$
and $f/\chi_5^9$ generates $S_{36,3}(\Gamma_2)$.
As a check we note that the Fourier coefficient at $n=[1,1,1]$ is of the form
$$
[0, 0, 0, 0, 0, 0, 0, 32/6089428125, 464/6089428125, \ldots ]
$$
and the coefficient at $n=[5,5,5]$ is 
$$
[0, 0, 0, 0, 0, 0, 0, -8687121398144/81192375, -125963260273088/81192375, 
\ldots ]
$$
giving the eigenvalue for the Hecke operator $T_5$ as their ratio
$ -20360440776900$, in agreement with the value given in \cite{website}.
\end{section}
\begin{appendix}
\newcommand{\ps}{\par \smallskip}
\bigskip

\begin{section}{by Ga\"etan Chenevier}\label{appA}
\noindent Let $j$ and $k$ be integers with $j\geq 0$, and $\Gamma \subset {\rm Sp}_4(\ZZ)$ a congruence subgroup. Recall that $S_{j,k}(\Gamma)$ denotes the space
of cuspidal Siegel modular forms for the subgroup $\Gamma$ with values in the representation
${\rm Sym}^j\, \otimes \,\det^k$ of ${\rm GL}_2(\CC)$. We first consider the full Siegel modular group $\Gamma_2 = {\rm Sp}_4(\ZZ)$ and provide alternative proofs of the following results : \ps

\begin{proposition} \label{k=1} We have ${S}_{j,1}(\Gamma_2)=0$ for any $j$, and ${S}_{j,2}(\Gamma_2)=0$ for $j \leq 38$.
\end{proposition}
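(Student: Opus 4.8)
The plan is to translate the vanishing statements into the language of automorphic representations and to apply Arthur's classification for ${\rm PGSp}_4\cong {\rm SO}_5$, reducing everything to the (non)existence of level-one elliptic cusp forms of small weight and to the classification of everywhere-unramified cuspidal representations of small motivic weight. First I would take $f$ to be a Hecke eigenform; it generates a cuspidal automorphic representation $\Pi=\otimes_v\Pi_v$ of ${\rm PGSp}_4(\mathbb{A}_{\QQ})$ that is spherical at every finite prime and whose archimedean component $\Pi_\infty$ is the holomorphic representation of weight $(j,k)$. Its infinitesimal character is the Harish--Chandra parameter $(j+k-1,\,k-2)$, and the decisive point is the second coordinate: for $k\geq 3$ it is positive and $\Pi_\infty$ is a holomorphic discrete series; for $k=2$ it vanishes and $\Pi_\infty$ is a (tempered) holomorphic \emph{limit} of discrete series; for $k=1$ it is negative and $\Pi_\infty$ is \emph{non-tempered}. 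This trichotomy is what produces the very different scope of the two assertions.

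For $k=1$, since $\Pi_\infty$ is non-tempered, $\Pi$ must be CAP: its Arthur parameter carries a nontrivial $SL_2$-factor, and the only cuspidal possibility is the Saito--Kurokawa type $\phi\boxplus[2]$. Matching the Hodge--Tate weights $\{-1,0,j-1,j\}$ of the spin parameter forces the ${\rm GL}_2$-part $\phi$ to have two weights differing by $1$, i.e.\ to be attached to a weight-$2$ elliptic eigenform; since $S_2(\Gamma_1)=0$, no such $\Pi$ exists, uniformly in $j$, whence $S_{j,1}(\Gamma_2)=0$ for every $j$. (The classical route of Skoruppa is an alternative: the Corollary of Section~\ref{MFDeg2} identifies the top Fourier--Jacobi component $\varphi_m^{(j)}$ with a scalar Jacobi cusp form of weight $1$, which vanishes after theta decomposition to weight $\tfrac12$.)

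For $k=2$ this shortcut is unavailable, since $\Pi_\infty$ is now a genuine (tempered) limit of discrete series, a legitimate member of the discrete spectrum. I would therefore enumerate the Arthur types of $\Pi$. The Yoshida/endoscopic parameters $\pi_1\boxplus\pi_2$ contribute cuspidally only when the two elliptic factors are ramified at a common finite prime with opposite local signs there, which is impossible at level one. The Saito--Kurokawa parameters $\phi\boxplus[2]$ need two Hodge--Tate weights differing by $1$, which the set $\{0,0,j+1,j+1\}$ does not contain for $j\geq 1$; and for $j=0$ they would require a weight-$2$ elliptic eigenform, again absent. This leaves only the general type: a cuspidal selfdual symplectic representation of ${\rm GL}_4/\QQ$, unramified everywhere, whose four Hodge--Tate weights are $\{0,0,j+1,j+1\}$ and whose motivic weight is $j+1$.

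For $j\leq 38$ this motivic weight is at most $39$, so I would invoke the classification of everywhere-unramified cuspidal representations of motivic weight $\leq 39$ (Chenevier--Lannes, Chenevier--Taïbi) to confirm that none has the \emph{singular}, non-regular infinitesimal character demanded here. The general-type form spanning $S_{36,3}(\Gamma_2)$, of motivic weight exactly $39$, has a regular infinitesimal character and therefore cannot match the singular one forced by $k=2$; this is why its non-vanishing enters the discussion. The main obstacle I anticipate lies precisely at this interface: performing the archimedean representation theory of limits of discrete series and of non-tempered highest-weight modules carefully enough to decide which global A-packets admit a holomorphic vector of weight $(j,1)$ or $(j,2)$ and to match the singular infinitesimal characters, and then marshalling the level-one classification data all the way up to motivic weight $39$ so that the $k=2$ statement is genuinely established and not merely its regular analogues.
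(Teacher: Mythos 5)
Your reduction runs parallel to the paper's own proof right up to its key intermediate result: for $k=1$ you use non-temperedness of $\pi_\infty$ plus Arthur's classification, and for $k=2$ you rule out the non-cuspidal Arthur types so that everything comes down to counting everywhere-unramified cuspidal selfdual symplectic representations $\Pi$ of ${\rm GL}_4/\QQ$ with the singular infinitesimal character $\{\pm\tfrac{j+1}{2},\pm\tfrac{j+1}{2}\}$ --- this is exactly Lemma \ref{Pi}. The genuine gap is in your final step: the ``classification of everywhere-unramified cuspidal representations of motivic weight $\leq 39$'' that you invoke does not exist. Chenevier--Lannes classify level-one algebraic cusp forms of motivic weight at most $22$ (pushed only slightly further by Chenevier--Ta\"ibi); beyond that range there is no complete list, and representations with \emph{singular} infinitesimal character are precisely the ones invisible to the trace-formula methods underlying those classifications and Ta\"ibi's dimension computations (there is no pseudo-coefficient for a limit of discrete series) --- which is why dimensions in weight $k=2$ are unknown in the first place, and why quoting such a result here would be close to circular. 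So for roughly $22\leq j\leq 38$ your argument rests on nothing. What the paper actually does at this point is analytic: it applies the Riemann--Weil explicit formula with Odlyzko's test function ${\rm F}_\lambda$ to ${\rm L}(s,\Pi\times\Pi^\vee)$, yielding the constraint ${\rm J}_{{\rm F}_{\lambda}}({\rm I}_{2j+2})+{\rm J}_{{\rm F}_{\lambda}}({\rm I}_{0})\leq \tfrac{2}{\pi^{2}}\lambda$, which fails numerically at $\lambda=3.3$ for all $j\leq 34$; and for $j=36,38$, where this direct bound genuinely breaks down, it pairs $\Pi$ against the representation $\Pi'$ with parameter ${\rm I}_{39}\oplus{\rm I}_{37}$ generated by the one-dimensional space $S_{36,3}(\Gamma_2)$, using the criterion ${\rm t}(V,V',\lambda)\geq 0$ of Chenevier--Lannes, whose computed values are negative. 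In particular you have the role of $S_{36,3}(\Gamma_2)\neq 0$ backwards: it is not a candidate to be excluded on regularity grounds (that exclusion is immediate), it is the \emph{tool} that kills the last two values of $j$, and its existence is what explains why the naive explicit-formula bound stops working at $j=36$.

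There are also gaps in your $k=1$ argument, though these are repairable. ``The only cuspidal possibility is Saito--Kurokawa'' is false as stated: the Soudry-type parameters $\phi[2]$, with $\phi$ an orthogonal (dihedral) cuspidal representation of ${\rm GL}_2$, and the Howe--Piatetski-Shapiro type parameters $\chi[2]\boxplus\mu[2]$ are also non-tempered, and excluding them at level one requires an argument; the paper does this with its point (d), whose $k=1$ case is Selberg's theorem that there is no Maass form of eigenvalue $1/4$ for ${\rm SL}_2(\ZZ)$ (needed when $j=0$), together with the fact that the trivial character is the only everywhere-unramified selfdual Hecke character of $\QQ$. Your Saito--Kurokawa bookkeeping is also off: in $\phi\boxplus 1[2]$ it is the factor $1[2]$ that contributes the two weights differing by $1$, and matching ${\rm inf}\,{\rm U}_{j,1}$ leaves \emph{two} cases, $j=0$ (excluded by $S_2(\Gamma_1)=0$) and $j=2$ (excluded by $S_4(\Gamma_1)=0$, where $\phi$ would have weight $4$), not only the weight-$2$ case you mention.
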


The vanishing of ${S}_{j,2}(\Gamma_2)$ for all $j\leq 52$ is also proved in this paper by Cl\'ery and van der Geer (Theorem \ref{vanishingSj2level1}). The vanishing of ${S}_{j,1}(\Gamma_2)$ was at least known to Ibukiyama,
who asserts in \cite[p. 54]{I} that it is a consequence of the vanishing
of all Jacobi forms of weight $1$ for ${\rm SL}_2(\ZZ)$ proven by Skoruppa \cite[Satz 6.1]{Sk}. Here we shall rather use
automorphic representation theoretic methods.
First we need to fix some notations and make some preliminary remarks.
We denote by $r : {\rm Sp}_4(\CC) \rightarrow {\rm GL}_4(\CC)$
the tautological inclusion. \ps \ps

For a real reductive Lie group $H$
we shall denote the infinitesimal character of the
Harish-Chandra module $U$ by ${\rm inf}\, U$. In the case
$H={\rm GL}_n(\RR)$ (resp. $H={\rm PGSp}_4(\RR)$), and following
Harish-Chandra and Langlands, ${\rm inf} \, U$ may be viewed in a
canonical way as a semisimple conjugacy class in the Lie algebra
$\mathfrak{h}=\mathfrak{gl}_n(\mathbb{C})$
(resp. $\mathfrak{h}=\mathfrak{sp}_4(\mathbb{C})$).
In both cases we may and shall identify this conjugacy class with the
multiset of its eigenvalues in the natural representation of $\mathfrak{h}$.
We denote by ${\rm W}_\RR$ the Weil group of $\RR$
(a certain extension of $\ZZ/2\ZZ$ by $\CC^\times$),
and for any integer $w$ we define ${\rm I}_{w}$ as the
$2$-dimensional representation of ${\rm W}_{\mathbb{R}}$
induced from the unitary character $z \mapsto (z/|z|)^{w}$ of
$\CC^\times$. \ps\ps

(a) As we have ${S}_{j,k}(\Gamma_2)=0$ for any odd $j$ or
for $k\leq 0$, we may once and for all assume
$j \equiv 0 \bmod 2$ and $k>0$. As is well-known, for any such $(j,k)$
there is an irreducible unitary Harish-Chandra module for
${\rm PGSp}_4(\RR)$, unique up to isomorphism,
generated by a highest-weight vector of $K$-type
${\rm Sym}^j \,\otimes \det^k$.
This module, that we shall denote by ${\rm U}_{j,k}$,
is a holomorphic discrete series for $k \geq 3$, a limit of
holomorphic discrete series for $k=2$, and non-tempered for $k=1$;
it is {\it non-generic} in all cases.
We have ${\rm inf}\, {\rm U}_{j,k} = 
\{\frac{j+2k-3}{2}, \,\frac{j+1}{2}, \,-\frac{j+1}{2}, \,-\frac{j+2k-3}{2}\}$.
More precisely, if $\varphi : {\rm W}_\RR \rightarrow {\rm Sp}_4(\CC)$
denotes the Langlands parameter of ${\rm U}_{j,k}$,
then we have $r \circ \varphi \simeq {\rm I}_{j+2k-3} \oplus {\rm I}_{j+1}$
for $k>1$, and
$r \circ \varphi \simeq {\rm I}_{j} \otimes |.|^{1/2} 
\oplus {\rm I}_{j} \otimes |.|^{-1/2}$ for $k=1$ (see e.g. \cite{S}
for a survey of those properties, and the references therein).\ps\ps

The relevance of ${\rm U}_{j,k}$ here is that if $\pi$ is a
cuspidal automorphic representation of ${\rm PGSp}_4$ over
$\mathbb{Q}$ generated by an element of ${S}_{j,k}(\Gamma_2)$,
then the Archimedean component $\pi_\infty$ of $\pi$ is isomorphic
to ${\rm U}_{j,k}$. The other important property of $\pi$ is that
$\pi_p$ is unramified for each prime $p$ (i.e. admits non-zero
invariants under ${\rm PGSp}_4(\ZZ_p)$). As ${\rm PGSp}_4$ is
isomorphic to the split classical group ${\rm SO}_5$ over $\mathbb{Z}$,
we may apply Arthur's theory \cite{arthur} to such a $\pi$. \ps\ps

(b)  One of the main results of Arthur \cite[Thm. 1.5.2]{arthur}
associates to any discrete automorphic representation $\pi$ of
${\rm PGSp}_4$ over $\QQ$ a unique isobaric automorphic
representation $\pi^{\rm GL}$ of ${\rm GL}_4$ over $\mathbb{Q}$,
characterized by the following property :
for any prime $p$ such that $\pi_p$ is unramified,
then $(\pi^{\rm GL})_p$ is unramified as well and its
Satake parameter is the image of the one of $\pi_p$ under the map $r$.
The infinitesimal character of $(\pi^{\rm GL})_\infty$ is the image of
${\rm inf}\, \pi_\infty$ under the derivative of $r$,
namely $\mathfrak{sp}_4(\CC) \rightarrow \mathfrak{gl}_4(\CC)$.
Moreover, there is a unique collection of distinct triplets
$(d_i,n_i,\pi_i)_{i \in I}$, with integers $d_i,n_i \geq 1$ and cuspidal selfdual automorphic representations $\pi_i$ of ${\rm GL}_{n_i}$ with
$$\pi^{\rm GL} \,\simeq \,\boxplus_{i \in I} \,(\,\boxplus_{l=0}^{d_i-1}\,\, \,\pi_i \,\otimes |.|^{\frac{d_i-1}{2}-l}\,)\, \, \, \, \, {\rm and} \,\,\,\,\,4 = \sum_{i \in I} n_i d_i.$$
The selfdual representation $\pi_i$ is symplectic in Arthur's sense if,
and only if, $d_i$ is odd.
All of this is included in \cite[Thm. 1.5.2]{arthur}.\ps\ps

(c) For $\pi$ as in (b), then ${\rm inf}\, \pi_\infty$ is the union,
over all $i \in I$ and all $0 \leq l< d_i$, of the multisets
$\frac{d_i-1}{2}-l\,+\,{\rm inf}\, (\pi_i)_\infty$.
In particular, if we have
$\lambda \in \frac{1}{2}\ZZ$ and $\lambda-\mu \in \ZZ$
for all $\lambda,\mu \in \,{\rm inf}\, \pi_\infty$,
then ${\rm inf}\, (\pi_i)_\infty$ has the same property for each $i$ :
such a $\pi_i$ is called {\it algebraic}.
If $\omega$ is a cuspidal selfdual algebraic automorphic representation of
${\rm GL}_m$ over $\QQ$, then $\omega_\infty$ is tempered
by the Jaquet-Shalika estimates,
and its Langlands parameter is trivial on the central subgroup
$\RR_{>0}$ of ${\rm W}_\RR$
(this is the so-called {\it Clozel purity lemma}, see e.g.  \cite[Chap. VIII Prop. 2.13]{CL}). \ps\ps

(d) The only selfdual cuspidal automorphic representation
$\pi$ of ${\rm GL}_1$ over $\QQ$ such that $\pi_p$ is unramified
for each prime $p$ is the trivial Hecke character $1$
(which is of course selfdual orthogonal).
Moreover, for any integer $k\geq 1$,
the number of cuspidal automorphic representations $\pi$ of
${\rm GL}_2$ over $\QQ$ such that $\pi_p$ is unramified for
each prime $p$, and with
${\rm inf}\, \pi_\infty =\{-\frac{k-1}{2},\frac{k-1}{2}\}$,
is the dimension of the space ${S}_k(\Gamma_1)$ of
cuspidal modular forms of weight $k$ for $\Gamma_1={\rm SL}_2(\ZZ)$.
Indeed, this is well-known for $k>1$, and for $k=1$
it follows from the fact that there is no Maass form
of eigenvalue $1/4$ for ${\rm SL}_2(\ZZ)$
(a fact due to Selberg, see also \cite[Chap. IX \S 3.19]{CL} for a short proof). \ps\ps

\ps\ps

\begin{proof} (of the vanishing of ${S}_{j,1}(\Gamma_2)$ for any $j$).
It is enough to show that there is no discrete automorphic
representation $\pi$ of ${\rm PGSp}_4$ over $\mathbb{Q}$
which is unramified at every prime and with $\pi_\infty \simeq {\rm U}_{j,1}$.
For that we study $\pi^{\rm GL}$,
and the associated collection $(d_i,n_i,\pi_i)_{i \in I}$ given by (b) above.
By (a), the infinitesimal character of $(\pi^{\rm GL})_\infty$ is
{\small $${\rm inf}\, {\rm U}_{j,1} \, =\,\{\,\frac{j+1}{2}, \,\frac{j-1}{2}, \,-\frac{j-1}{2}, \,-\frac{j+1}{2}\}.$$}
\noindent If we have $d_i=1$ for each $i$, then $(\pi^{\rm GL})_\infty$ is
tempered by (c), hence so is $\pi_\infty$ by Arthur's local-global
compatibility \cite[Thm. 1.5.1 (b) \& Thm. 1.5.2]{arthur}, a
contradiction as ${\rm U}_{j,1}$ is non-tempered by (a).
Fix $i \in I$ with $d_i\geq 2$.
If we have $n_i \geq 2$ then we must have $I=\{i\}$ and
$n_i=d_i=2$ by the equality $4 = \sum_i n_i d_i$. By (c)
and the shape of ${\rm inf}\, {\rm U}_{j,1}$ above, we
necessarily have ${\rm inf} \, (\pi_i)_\infty = \{j/2,-j/2\}$.
But this is absurd by (d) and the vanishing
${S}_{j+1}(\Gamma_1)=0$ for any even integer $j \geq 0$.
We have thus $(d_i,n_i,\pi_i)=(2,1,1)$.
Choose $i' \in I-\{i\}$.
As we have $(d_{i'},n_{i'},\pi_{i'}) \neq (d_i,n_i,\pi_i)$,
the previous argument shows $d_{i'}=1$, so $
\pi_{i'}$ is symplectic by (b), which forces $n_{i'}$ to be even, hence the only
 possibility is $n_{i'}=2$ and $I=\{i,i'\}$.
But then the shape of ${\rm inf}\, {\rm U}_{j,1}$ and (c) show
that we have either $j=0$ and ${\rm inf} (\pi_{i'})_\infty = \{ 1/2, -1/2 \}$
or $j=1$ and ${\rm inf} (\pi_{i'})_\infty = \{ 3/2, -3/2 \}$.
Both cases are absurd by (d) as we have ${S}_4(\Gamma_1)={S}_2(\Gamma_1)=0$, and we are done.
\end{proof}

The second assertion of the proposition will be a consequence of the
following two lemmas.

\begin{lemma}\label{Pi}
Let $j \geq 0$ be an even integer.
The integer $\dim {S}_{j,2}(\Gamma_2)$ is the number
of cuspidal, selfdual symplectic, automorphic representations
$\Pi$ of ${\rm GL_4}$ over $\mathbb{Q}$
whose local components $\Pi_p$ are unramified
for each prime $p$, and with ${\rm inf} \, \Pi_\infty\, = \,\{\,\frac{j+1}{2}, \,\frac{j+1}{2}, \,-\frac{j+1}{2}, \,-\frac{j+1}{2}\}$.
\end{lemma}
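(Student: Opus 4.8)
The plan is to compute $\dim S_{j,2}(\Gamma_2)$ spectrally and then run the classification (a)--(d) on the resulting parameters. First I would use the standard correspondence between holomorphic Siegel cusp forms and automorphic representations recalled in the discussion following (a): every Hecke eigenform in $S_{j,2}(\Gamma_2)$ generates a cuspidal automorphic representation $\pi$ of ${\rm PGSp}_4$ over $\QQ$ with $\pi_\infty\simeq {\rm U}_{j,2}$ and $\pi_p$ unramified for every prime $p$. Since an unramified $\pi_p$ carries a one-dimensional space of spherical vectors and ${\rm U}_{j,2}$ a one-dimensional space of holomorphic vectors of its minimal $K_\infty$-type, this yields $\dim S_{j,2}(\Gamma_2)=\sum_\pi m(\pi)$, the sum running over the cuspidal $\pi$ that are unramified everywhere with $\pi_\infty\simeq{\rm U}_{j,2}$, where $m(\pi)$ is the automorphic multiplicity.

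Next I would attach to each such $\pi$ the collection $(d_i,n_i,\pi_i)_{i\in I}$ of (b) and enumerate it. By (a) the common infinitesimal character is $\{\tfrac{j+1}{2},\tfrac{j+1}{2},-\tfrac{j+1}{2},-\tfrac{j+1}{2}\}$ with $\tfrac{j+1}{2}\in\tfrac12+\ZZ$; by (c) each $\pi_i$ is algebraic and tempered with parameter trivial on $\RR_{>0}$; and by (d) the only everywhere-unramified selfdual cuspidal representation on ${\rm GL}_1$ is the trivial one, while those on ${\rm GL}_2$ are the level-one eigenforms. Running over the partitions $4=\sum_i n_id_i$ and imposing the symplectic parity (a factor $\pi_i[d_i]$ with $d_i$ even forces $\pi_i$ orthogonal) leaves only a handful of shapes. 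The shape $\mathbf 1[4]$ has infinitesimal character $\{\tfrac32,\tfrac12,-\tfrac12,-\tfrac32\}$ and is impossible; the Saito--Kurokawa shape $\sigma[1]\boxplus\mathbf 1[2]$ forces $\tfrac{j+1}{2}=\tfrac12$, hence $j=0$ and $\sigma\in S_2(\Gamma_1)=0$; a shape $\sigma[2]$ would require an orthogonal, hence dihedral, cuspidal $\sigma$ on ${\rm GL}_2$ unramified everywhere, impossible since $\QQ$ has no everywhere-unramified quadratic extension; and shapes with two distinct ${\rm GL}_1$-factors are excluded since by (d) the trivial character is the only available one. This leaves only the cuspidal shape $\pi^{\rm GL}=\Pi$ with $\Pi$ cuspidal selfdual symplectic on ${\rm GL}_4$, and the Yoshida shape $\pi^{\rm GL}=\sigma_1\boxplus\sigma_2$ with distinct level-one eigenforms $\sigma_1,\sigma_2$, both necessarily of weight $j+2$.

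The decisive step, and the one I expect to be the main obstacle, is to show that the Yoshida shape contributes nothing to $S_{j,2}(\Gamma_2)$, even though its archimedean parameter is again ${\rm I}_{j+1}\oplus{\rm I}_{j+1}$ and its packet does contain ${\rm U}_{j,2}$. Here I would invoke Arthur's multiplicity formula \cite[Thm.\ 1.5.2]{arthur}: for $\psi=\sigma_1\boxplus\sigma_2$ the global component group is $\ZZ/2\ZZ$, and a member occurs in the discrete spectrum precisely when its associated character equals Arthur's sign character $\epsilon_\psi$. As $\pi$ is unramified at every finite place, the only factor that could render $\epsilon_\psi$ non-trivial is the symplectic root number $\epsilon(1/2,\sigma_1\times\sigma_2)$; but $\sigma_1\times\sigma_2$ is \emph{orthogonal}, so this factor does not enter and $\epsilon_\psi$ is trivial. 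Consequently only the generic member of the packet is automorphic, whereas the non-generic holomorphic module ${\rm U}_{j,2}$ carries the non-trivial character of $\ZZ/2\ZZ$ at $\infty$ and therefore does not occur; no Yoshida form lies in $S_{j,2}(\Gamma_2)$.

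Finally I would treat the cuspidal shape. For a cuspidal symplectic $\Pi$ the global component group is trivial, so ${\rm U}_{j,2}$ occurs with multiplicity one, giving exactly one everywhere-unramified holomorphic form per such $\Pi$. Conversely, by (c) any cuspidal selfdual symplectic $\Pi$ on ${\rm GL}_4$ that is unramified everywhere with the stated infinitesimal character is tempered at $\infty$ with parameter ${\rm I}_{j+1}\oplus{\rm I}_{j+1}$, hence descends to ${\rm PGSp}_4$ with a packet containing ${\rm U}_{j,2}$. Strong multiplicity one on ${\rm GL}_4$ then makes $\Pi\mapsto$ (the associated holomorphic form) a bijection onto a Hecke-eigenbasis of $S_{j,2}(\Gamma_2)$, and combined with the previous paragraph this gives $\dim S_{j,2}(\Gamma_2)=\#\{\Pi\}$, as asserted.
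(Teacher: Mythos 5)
Your proposal follows essentially the same route as the paper's proof: transfer a Hecke eigenform to a cuspidal representation $\pi$ of ${\rm PGSp}_4$, run Arthur's classification (b) on $\pi^{\rm GL}$, use the infinitesimal character, the parity constraint and (d) to reduce to the cuspidal and Yoshida shapes, kill the Yoshida shape with Arthur's multiplicity formula, and count the cuspidal shape using $\mathcal{S}_\psi=1$ and multiplicity one. Your justification that $\epsilon_\psi$ is trivial (the Rankin--Selberg product $\sigma_1\times\sigma_2$ of two symplectic representations is orthogonal, so no root number enters) is exactly the mechanism behind the paper's terser statement that the multiplicity formula forces $\pi_\infty$ to be generic; and your exclusion of the shape $\sigma[2]$ via the dihedral classification of orthogonal ${\rm GL}_2$-representations and the absence of unramified quadratic extensions of $\QQ$ is a legitimate variant of the paper's argument, which instead observes that any $d_i\geq 2$ forces two entries of ${\rm inf}\,{\rm U}_{j,2}$ to differ by $1$, hence $j=0$, and then invokes (d).

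There is, however, one genuine (if standard) omission in your converse direction. Your first paragraph computes $\dim S_{j,2}(\Gamma_2)$ as a sum over \emph{cuspidal} representations, but Arthur's multiplicity formula only produces, for a given $\Pi$, a \emph{discrete} automorphic $\pi$ with $\pi_\infty\simeq{\rm U}_{j,2}$, unramified everywhere and $\pi^{\rm GL}\simeq\Pi$. To conclude that this $\pi$ contributes an element of $S_{j,2}(\Gamma_2)$ you must rule out that it is residual; the paper does this by citing a classical result of Wallach, namely that a discrete automorphic representation whose Archimedean component is tempered (as ${\rm U}_{j,2}$ is, being a limit of discrete series) is automatically cuspidal. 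Without this step your argument only yields the inequality $\dim S_{j,2}(\Gamma_2)\leq\#\{\Pi\}$, not the equality asserted in the lemma. The fix is one sentence, but it is needed.
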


\begin{proof}
Let $\pi$ be a cuspidal automorphic representation of
${\rm PGSp}_4$ over $\mathbb{Q}$ generated by an arbitrary
Hecke eigenform $F$ in ${S}_{j,2}(\Gamma_2)$.
Consider its associated automorphic representation
$\pi^{\rm GL}$ of ${\rm GL}_4$ and collection of
$(d_i,n_i,\pi_i)$'s as in (b).
We claim that $\pi^{\rm GL}$ is necessarily cuspidal, i.e.
$I=\{i\}$ is a singleton and $d_i=1$,
so that $\Pi=\pi^{\rm GL}$ satisfies all the assumptions of
the statement by (a) and (b). \ps

Let us show first that we have $d_i=1$ for each $i \in I$.
Otherwise, (c) shows that two elements of
${\rm inf}\, {\rm U}_{j,2}$ must differ by $1$, which only happens for $j=0$.
But for $j=0$ an argument similar to the one in the previous proof
shows that if $d_i>1$ then we have $I=\{i,i'\}$ with
$(d_i,n_i,\pi_i)=(2,1,1)$, $n'_i=2$ and
${\rm inf}\, (\pi_i)_{\infty} = \{1/2,-1/2\}$,
which is absurd by the vanishing ${S}_2(\Gamma_1)=0$
and (d), and we are done. As a consequence,
the Langlands parameter of
$(\pi^{\rm GL})_\infty$ is ${\rm I}_{j+1} \oplus {\rm I}_{j+1}$ by (c). \ps

Let us denote by $\psi$ Arthur's substitute for the
{\it global parameter} of the representation $\pi$ of
${\rm PGSp}_{4}$ defined in \cite[Chap. 1 \S 1.4]{arthur}.
We have just proved that $\psi_\infty$ is the tempered
Langlands parameter of ${\rm PGSp}_4(\RR)$ with
$r \,\circ \,\psi_\infty \simeq {\rm I}_{j+1} \oplus {\rm I}_{j+1}$,
{\it i.e.} the Langlands parameter of ${\rm U}_{j,2}$ by (a).
We have $\mathcal{S}_{\psi_\infty} = \ZZ/2\ZZ$ :
the corresponding $L$-packet of ${\rm PGSp}_4(\RR)$
(limit of discrete series) has two elements, namely ${\rm U}_{j,2}$
and a generic limit of discrete series with same infinitesimal character.
We now apply Arthur's multiplicity formula to the element $\pi$
of the global packet $\Pi_\psi$ defined by Arthur.
As we have $d_i=1$ for all $i$, either $\pi^{\rm GL}$
is cuspidal or we have $I=\{i,i'\}$
with $n_i=n_{i'}=2$ and ${\rm inf}\, (\pi_i)_\infty = 
{\rm inf}\, (\pi_{i'})_\infty = \{\frac{j+1}{2},-\frac{j+1}{2}\}$.
If $\pi^{\rm GL}$ is not cuspidal, then according to Arthur's
definitions the natural map
$\mathcal{S}_\psi \rightarrow \mathcal{S}_{\psi_\infty}$ is an
isomorphism of groups of order $2$. But then his multiplicity formula shows that $\pi_\infty$ has to be generic since $\pi_p$ is unramified for each prime $p$, a contradiction as ${\rm U}_{j,2}$ is not generic. (We have shown that $\pi$ is not of ``Yoshida type''.) 
We have thus proved that $\pi^{\rm GL}$ is cuspidal. 
Note that in this case we have $\mathcal{S}_\psi=1$, 
thus by the multiplicity formula again, 
the multiplicity of $\pi$ in the automorphic discrete spectrum of ${\rm PGSp}_4$ is $1$; 
in particular, the Hecke eigenspace of the eigenform $F$ 
we started from has dimension $1$. 
It thus only remains to show that any $\Pi$ 
as in the statement is in the image of the construction of the first paragraph above. \ps

Let $\Pi$ be as in the statement. The Langlands parameter of
$\Pi_\infty$ is the image under $r$ of the one of
${\rm U}_{j,2}$ by (a) and (c).
A trivial application of Arthur's multiplicity formula shows
the existence of a discrete automorphic $\pi$ for ${\rm PGSp}_4$
with $\pi_\infty \simeq {\rm U}_{j,2}$, which is unramified at every prime,
and satisfying $\pi^{\rm GL} \simeq \Pi$. As ${\rm U}_{j,2}$ is tempered,
a classical result of Wallach ensures that $\pi$ is actually cuspidal,
hence generated by an element of ${S}_{j,2}(\Gamma_2)$ :
this concludes the proof.
\end{proof}

\begin{lemma} For any even integer $0 \leq j \leq 38$ there is no $\Pi$ as in Lemma \ref{Pi}. \end{lemma}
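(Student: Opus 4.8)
By Lemma \ref{Pi} it suffices to prove that for every even $j$ with $0\le j\le 38$ there is no cuspidal selfdual symplectic automorphic representation $\Pi$ of $\mathrm{GL}_4$ over $\mathbb{Q}$, unramified at every prime, with $\mathrm{inf}\,\Pi_\infty=\{\tfrac{j+1}{2},\tfrac{j+1}{2},-\tfrac{j+1}{2},-\tfrac{j+1}{2}\}$. Set $w=j+1$, an odd integer with $w\le 39$. By (c) any such $\Pi$ is algebraic and $\Pi_\infty$ is tempered, so by the Clozel purity lemma its Langlands parameter is trivial on $\RR_{>0}$ and is therefore forced by the infinitesimal character to be $r\circ\varphi\simeq \mathrm{I}_w\oplus\mathrm{I}_w$, exactly the parameter of $\mathrm{U}_{j,2}$ in (a). The decisive feature is that $\mathrm{inf}\,\Pi_\infty$ is \emph{non-regular}: the eigenvalue $\tfrac{w}{2}$ occurs with multiplicity two. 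Thus $\Pi$ sits at the degenerate ``$k=2$'' boundary of the family of symplectic $\mathrm{GL}_4$-forms attached to genus-two Siegel cusp forms, where the regularity of the infinitesimal character present for $k\ge 3$ fails.

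The plan is to enumerate all such $\Pi$ using Arthur's classification \cite{arthur} together with the explicit determination of level-one cusp forms of classical groups. Since $\Pi$ is cuspidal and symplectic it is the functorial transfer of a cuspidal level-one automorphic representation of the split group $\mathrm{SO}_5\cong \mathrm{PGSp}_4$ lying in the packet with Archimedean parameter $\mathrm{I}_w\oplus\mathrm{I}_w$, precisely the packet analysed in the proof of Lemma \ref{Pi}. For a \emph{regular} infinitesimal character the number of such representations is computed, via Arthur's multiplicity formula on a definite inner form, by Chenevier--Renard, and equals the ``general type'' part of the relevant space of level-one genus-two cusp forms; I would read the answer off from their formulas. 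For the degenerate parameter here I would obtain the corresponding count either as the appropriate limit of the regular-weight quasi-polynomial or by running the multiplicity formula directly for $\mathrm{I}_w\oplus\mathrm{I}_w$, and then verify that it vanishes for every odd $w\le 39$. The bookkeeping is finite by (b)--(d): the only selfdual level-one ``atoms'' that can enter a global parameter of this infinitesimal character are the trivial character $1$ and the representations $\Delta_k$ attached to the finitely many elliptic eigenforms of weight $k\le w+1$, whose number is $\dim S_k(\Gamma_1)$, so only finitely many combinations need be inspected.

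The main obstacle is exactly this non-regular, limit-of-discrete-series infinitesimal character $\mathrm{I}_w\oplus\mathrm{I}_w$, for which the clean regular-weight dimension formulas do not directly apply. The real work is to make the enumeration valid at the boundary: one must identify the Archimedean packet and its component group $\mathcal{S}_{\psi_\infty}=\ZZ/2\ZZ$ for the degenerate parameter, control the map $\mathcal{S}_\psi\to\mathcal{S}_{\psi_\infty}$ governing the multiplicity formula (as is already done for this same packet in the proof of Lemma \ref{Pi}), and justify the resulting count as a limiting value of the Chenevier--Renard computation. Granting this, what remains is the explicit, somewhat lengthy verification that no admissible $\Pi$ arises for any odd $w\le 39$; it is the reach of this verification, rather than any conceptual point, that confines the statement to $j\le 38$.
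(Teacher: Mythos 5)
Your proposal takes a genuinely different route from the paper, and unfortunately the route is blocked at exactly the point you flag as "the main obstacle." The enumeration you propose rests on Chenevier--Renard-type counts of level-one representations, which are obtained by computing spaces of algebraic modular forms on the \emph{definite} inner form of ${\rm SO}_5$. There, the Archimedean component of any automorphic representation is a finite-dimensional representation, whose infinitesimal character is of the form $\lambda+\rho$ and hence \emph{regular}. A cuspidal $\Pi$ with ${\rm inf}\,\Pi_\infty=\{\frac{j+1}{2},\frac{j+1}{2},-\frac{j+1}{2},-\frac{j+1}{2}\}$ is therefore not a boundary case of that computation: it is invisible to it, and there is no theorem allowing you to recover its count as "the appropriate limit of the regular-weight quasi-polynomial." Such extrapolations are known to give wrong answers in non-regular weight (the classical example is $\dim S_1(\Gamma_0(N),\chi)$ versus the weight $k\geq 2$ formula, and this paper itself notes in Section 3 that extrapolating the dimension algorithms to $k=2$ produces negative numbers). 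Descending instead to the \emph{split} ${\rm SO}_5\cong{\rm PGSp}_4$ and counting level-one members of the packet of ${\rm U}_{j,2}$ is circular: that count is essentially $\dim S_{j,2}(\Gamma_2)$ plus the number of forms with generic Archimedean component, which is precisely what is unknown.

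There is a second, more structural gap. The bookkeeping via (b)--(d) and Arthur's multiplicity formula can only analyze how a discrete automorphic representation of ${\rm PGSp}_4$ decomposes into cuspidal selfdual constituents of smaller ${\rm GL}_n$'s; it takes the list of cuspidal selfdual representations of ${\rm GL}_4$ as an \emph{input} and cannot rule one out. The $\Pi$ of Lemma \ref{Pi} is itself cuspidal on ${\rm GL}_4$, i.e.\ a single "atom," so there are no finitely many combinations of $1$'s and $\Delta_k$'s to inspect; non-existence of such an atom is not a combinatorial statement inside Arthur's formalism. This is exactly why the paper's proof abandons classification arguments at this stage and turns analytic: it applies the Riemann--Weil explicit formula with Odlyzko's test function ${\rm F}_\lambda$ to ${\rm L}(s,\Pi\times\Pi^\vee)$, yielding the positivity constraint ${\rm J}_{{\rm F}_{\lambda}}({\rm I}_{2j+2})+{\rm J}_{{\rm F}_{\lambda}}({\rm I}_{0}) \leq \frac{2}{\pi^{2}}\lambda$, which is numerically violated at $\lambda=3.3$ for all even $j\leq 34$; for $j=36,38$ it plays $\Pi$ against the auxiliary cuspidal $\Pi'$ attached to the generator of $S_{36,3}(\Gamma_2)$ (Archimedean parameter ${\rm I}_{39}\oplus{\rm I}_{37}$) via the criterion ${\rm t}(V,V',\lambda)\geq 0$ of Chenevier--Lannes, which again fails numerically. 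In particular the bound $j\leq 38$ is dictated by the reach of these analytic inequalities (they must fail once genuine forms like the one in $S_{36,3}$ exist nearby), not by the length of any enumeration.
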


In order to contradict the existence of a $\Pi$ as in Lemma~\ref{Pi} for small $j$, and following work of Odlyzko, Mestre, Fermigier, Miller and Chenevier-Lannes, we shall apply the so-called {\it explicit formula} ``\`a la Riemann-Weil'' to a suitable test function $F$ and to the complete Rankin-Selberg $L$-function ${\rm L}(s,\Pi \times \Pi')$, first to $\Pi' = \Pi^{\vee}$ (the contragredient of $\Pi$) and then to some other well-chosen cuspidal automorphic representations $\Pi'$. Let us stress that the analytic properties of those Rankin-Selberg $L$-functions (meromorphic continuation to $\mathbb{C}$, functional equation, determination of the poles, and boundedness in vertical strips away from the poles) which have been established by Gelbart, Jacquet, Shalika and Shahidi, will play a crucial role in the argument. \ps

\begin{proof}  It will be convenient to follow the exposition of
the explicit formula given in \cite[Chap. IX \S 3]{CL},
which is designed for this kind of applications, and from which
we shall borrow our notations. In particular, we choose for the
test function $F$ the scaling of Odlyzko's function which is denoted by
${\rm F}_{\lambda}$ in \cite[Chap. IX \S 3.16]{CL}, and we denote by
${\rm K}_{\infty}$ the Grothendieck ring of finite dimensional
complex representations of the quotient of the compact group
${\rm W}_{\mathbb{R}}$ by its central subgroup $\RR_{>0}$, and by
${\rm J}_{F} : {\rm K}_{\infty} \rightarrow \mathbb{R}$
the concrete linear form associated to $F$ defined in
Proposition-Def. 3.7 {\it loc. cit}. \ps

Let $j\geq 0$ be an even integer and let $\Pi$ be as in Lemma~\ref{Pi}.
As already explained, it follows from (c) that the Langlands parameter
of $\Pi_\infty$ is ${\rm I}_{j+1} \oplus {\rm I}_{j+1}$,
whose square in the ring
${\rm K}_\infty$ is  $4\, ( {\rm I}_{2j+2}+ {\rm I}_{0})$.
The explicit formula applied to $\Pi \times \Pi^\vee$
leads to the inequality \cite[Chap. IX Cor. 3.11 (i)]{CL} :
$$ {\rm J}_{{\rm F}_{\lambda}}({\rm I}_{2j+2})+ 
{\rm J}_{{\rm F}_{\lambda}}({\rm I}_{0}) \leq \frac{2}{\pi^{2}} \lambda
$$
for all $\lambda >0$. As ${\rm J}_{{\rm F}_{\lambda}}({\rm I}_{w})$
is a non-increasing function of $w$,
the truth of the proposition for $j \leq 34$ is a consequence of the following numerical computation for $\lambda = 3.3$$${\rm J}_{{\rm F}_{\lambda}}({\rm I}_{70})+ {\rm J}_{{\rm F}_{\lambda}}({\rm I}
_{0}) \simeq 0.679 \, \, \, \, \, \, {\rm and}\, \, \, \, \,  \frac{2}{\pi^{2}} 
\lambda \simeq 0.669$$(the values are given up to
$10^{-3}$, and the left-hand side has been computed
using the formula for ${\rm J}_{F_{\lambda}}$ given in \cite[Chap. IX Prop. 3.17]{CL}).
\ps

We now explain how to deal with the cases $j=36$ and $38$.
By Tsushima's formula (proved for $k=3$
independently by Petersen and Ta\"ibi),
we know that the first value of $j$ such that
${S}_{j,3}(\Gamma_2)$ is non-zero is $j=36$,
in which case it has dimension $1$.
Let $\pi$ be the cuspidal automorphic representation of ${\rm PGSp}_4$ over
$\QQ$ generated by ${S}_{36,3}(\Gamma_2)$ and set $\Pi' = \pi^{\rm GL}$.
This $\Pi'$ is a selfdual cuspidal representation by
\cite[Chap. IX Prop. 1.4]{CL}, and the Langlands parameter
of $\Pi'_\infty$ is ${\rm I}_{39} \oplus {\rm I}_{37}$ (the existence of such a $\Pi'$ actually ``explains'' why the argument above breaks down at $j=36$, as the Langlands parameter of $\Pi'_\infty$ is ``close'' to ${\rm I}_{37}\oplus {\rm I}_{37}$). We now apply the explicit formula to $\Pi \times \Pi^\vee$, $\Pi' \times {\Pi'}^\vee$ and $\Pi \times {\Pi'}^\vee$. It leads to a simple criterion, given in \cite[Chap. IX Scholie 3.26]{CL}, for $\Pi$ not to exist : the explicit quantity denoted there by ${\rm t}(V,V',\lambda)$ has to be $\geq 0$ for all $\lambda$, where $V$ and $V'$ are the respective Langlands parameter of $\Pi_\infty$ and $\Pi'_\infty$. But a computation gives
$${\rm t}({\rm I}_{37} + {\rm I}_{37},{\rm I}_{39} + {\rm I}_{37},4) \simeq -0.429\,\,\,\,\, {\rm and} \,\,\, \, \, {\rm t}({\rm I}_{39}+ {\rm I}_{39},{\rm I}_{39} + {\rm I}_{37},4) \simeq -0.039$$
(these values are given up to $10^{-3}$) which are both $<0$.
This concludes the proof.
\end{proof}

\begin{remark} {\rm (i)} As we have ${S}_{j,3}(\Gamma_2)=0$ for $j<36$ by Tsushima's formula, the vanishing of ${S}_{j,2}(\Gamma_2)$ for $j \leq 38$ is a very mild evidence toward Conjecture 1.1 of Cl\'ery and van der Geer. \ps
{\rm (ii)}  Lemma \ref{Pi} and the explicit formula can also be used to obtain upper bounds on $\dim {S}_{j,2}(\Gamma_2)$. Indeed, keeping the notations in the above proof, and applying \cite[Chap. IX Cor. 3.14]{CL} (due to Ta\"ibi), we get that the inequality $$({\rm J}_{{\rm F}_{\lambda}}({\rm I}_{2j+2})+ {\rm J}_{{\rm F}_{\lambda}}({\rm I}_{0})) \dim {S}_{j,2}(\Gamma_2) \leq \frac{2}{\pi^{2}} \lambda$$ holds for all $\lambda >0$. When the parenthesis on left hand side is $> 0$, which happens (for big enough $\lambda$) for all $j \leq 138$, we obtain an explicit upper bound for $\dim {S}_{j,2}(\Gamma_2)$. For instance, we get $\dim {S}_{j,2}(\Gamma_2) \leq 1$ for all $j < 54$ and  $\dim {S}_{j,2}(\Gamma_2) \leq 2$ for all $j < 66$ (choose respectively $\lambda=5$ and $\lambda=6$).
\end{remark}
\ps

\noindent Our last and main result concerns the kernel $\Gamma_2[2]$ of the reduction ${\rm Sp}_4(\ZZ) \rightarrow {\rm Sp}_4(\ZZ/2\ZZ)$.

\begin{theorem}\label{mainthmapp} We have ${S}_{j,1}(\Gamma_2[2])=0$ for any $j$.
\end{theorem}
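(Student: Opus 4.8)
The plan is to mimic the representation-theoretic proof of the vanishing of ${S}_{j,1}(\Gamma_2)$ given above, while carrying along the extra ramification now allowed at $p=2$. Since $-1\in\Gamma_2[2]$ acts on a form of weight $(j,1)$ by $(-1)^j$, the space vanishes for odd $j$, so I may assume $j$ even. For even $j$ it suffices to rule out a cuspidal automorphic representation $\pi$ of ${\rm PGSp}_4$ over $\QQ$ with $\pi_\infty\simeq {\rm U}_{j,1}$, with $\pi_p$ unramified for every odd prime $p$, and with $\pi_2$ having nonzero vectors fixed by the principal congruence subgroup of level $2$ in ${\rm Sp}_4(\ZZ_2)$. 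To such a $\pi$ I attach $\pi^{\rm GL}$ and the collection $(d_i,n_i,\pi_i)$ of (b). As ${\rm U}_{j,1}$ is non-tempered by (a), Arthur's local–global compatibility forces some $d_i\geq 2$, exactly as in the proof of Proposition \ref{k=1}.

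Next I would run the infinitesimal-character bookkeeping of (c), using ${\rm inf}\,{\rm U}_{j,1}=\{\frac{j+1}{2},\frac{j-1}{2},-\frac{j-1}{2},-\frac{j+1}{2}\}$. Going through the symplectic four-dimensional parameters with a nontrivial Arthur ${\rm SL}_2$, namely $\chi\boxtimes[4]$, $\chi\boxtimes[2]\boxplus\chi'\boxtimes[2]$, $\sigma\boxtimes[1]\boxplus\chi\boxtimes[2]$ (Saito–Kurokawa, $\sigma$ symplectic) and $\sigma\boxtimes[2]$ with $\sigma$ orthogonal, one checks that the first three can match ${\rm inf}\,{\rm U}_{j,1}$ only for $j\in\{0,2\}$, since a factor $[2]$ (resp. $[4]$) forces $\pm\tfrac12$ (resp. $\pm\tfrac12,\pm\tfrac32$) into the infinitesimal character. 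The parameter $\sigma\boxtimes[2]$, however, matches for every even $j$, with ${\rm inf}\,\sigma_\infty=\{\tfrac{j}{2},-\tfrac{j}{2}\}$; this is the essential new case compared with the level-one situation.

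The hard part will be eliminating this surviving CAP-type parameter $\psi=\sigma\boxtimes[2]$. In the level-one proof it died for free, because by (d) the number of such $\sigma$ equals $\dim {S}_{j+1}(\Gamma_1)=0$ (odd weight). At level $2$ this fails: here $\sigma$ is a selfdual orthogonal (dihedral) ${\rm GL}_2$-representation, unramified outside $2$, i.e. a CM newform of odd weight $j+1$ and odd quadratic nebentypus of $2$-power conductor, and such forms (for instance those attached to $\QQ(i)$ or $\QQ(\sqrt{-2})$) genuinely exist for infinitely many $j$. To show that none of them contributes the holomorphic member ${\rm U}_{j,1}$ to ${S}_{j,1}(\Gamma_2[2])$, I would invoke Arthur's multiplicity formula for $\psi$: the global component group is $\mathcal{S}_\psi=\ZZ/2\ZZ$, Arthur's sign $\varepsilon_\psi$ is governed by the root number $\varepsilon(1/2,\sigma)$, and the non-generic archimedean member ${\rm U}_{j,1}$ carries the nontrivial character of $\mathcal{S}_{\psi_\infty}$. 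What remains is a purely local problem at $p=2$: to determine which characters of $\mathcal{S}_{\psi_2}$ are carried by the members of the local packet $\Pi_{\psi_2}$ that admit nonzero vectors under the level-$2$ principal congruence subgroup, and to verify that the character the multiplicity formula would demand is never realized by such a member. Settling this local-at-$2$ computation, in tandem with the value of $\varepsilon(1/2,\sigma)$ for a dihedral $\sigma$, is the crux of the argument; the degenerate parameters surviving only for $j\in\{0,2\}$ are then dispatched by the same multiplicity bookkeeping together with the vanishing of the relevant spaces of weight-$2$ and weight-$4$ elliptic forms.

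Finally, as an independent check in low weight, the squaring construction of the main text sends a nonzero $f\in {S}_{j,1}(\Gamma_2[2])$, which may be taken rational, to the nonzero $\mathfrak{S}_6$-invariant form $\sum_{\sigma\in\mathfrak{S}_6}\sigma(f)^2\in {S}_{2j,2}(\Gamma_2)$; hence ${S}_{j,1}(\Gamma_2[2])\neq 0$ would imply ${S}_{2j,2}(\Gamma_2)\neq 0$. Proposition \ref{k=1} then already yields the vanishing for all $j\leq 19$, which is consistent with, and a special case of, the representation-theoretic argument above.
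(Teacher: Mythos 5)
Your reduction to even $j$, the automorphic reformulation, and the infinitesimal-character analysis isolating the Soudry-type parameter $\sigma\boxtimes[2]$ with $\sigma$ dihedral orthogonal, unramified outside $2$ and ${\rm inf}\,\sigma_\infty=\{j/2,-j/2\}$, all agree with the paper, and you are right that this is exactly where the level-one argument breaks down. But your plan for eliminating this surviving case rests on a misidentification of the packet structure. For $\psi=\sigma\boxtimes[2]$ with $\sigma$ cuspidal orthogonal, the four-dimensional symplectic representation $\sigma\boxtimes[2]$ is irreducible, so the centralizer of $\psi$ in ${\rm Sp}_4(\CC)$ is the center $\{\pm 1\}$ and the global component group $\mathcal{S}_\psi$ is \emph{trivial}: this is the ``stable case'', and Arthur's multiplicity formula imposes no condition whatsoever --- every member of the global packet $\Pi(\chi)$ is automorphic (the paper states this explicitly). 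The component group $\ZZ/2\ZZ$ and the root number $\varepsilon(1/2,\cdot)$ you invoke belong to the Saito--Kurokawa type $\mu\boxtimes[1]\boxplus\chi\boxtimes[2]$, not to the Soudry type. Consequently there is no ``character demanded by the multiplicity formula'' that could fail to be realized locally at $2$: the global obstruction you hope to exploit does not exist, and the ``purely local problem at $p=2$'' --- which you yourself flag as the unproved crux --- cannot be resolved along these lines.

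The paper's proof turns this stability to its advantage rather than fighting it. Given a hypothetical $\pi$ in the packet of ${\rm ind}(\chi)$ with $\pi_\infty\simeq{\rm U}_{j,1}$ and nonzero $J$-invariants in $\pi_2$, it constructs (Lemma \ref{consthecke}) an auxiliary Hecke character $\eta$ of $K$ with $\eta_2=1$, $\eta_\infty(z)=(z/\overline{z})^{w_K}$ and trivial restriction to the ideles of $\QQ$; writing $j/2=r\,w_K+j'/2$ with $1\leq j'/2\leq w_K$ and setting $\chi'=\chi\eta^{-r}$, the local Arthur packets at $2$ attached to $\chi$ and $\chi'$ coincide, so by stability there is an automorphic, indeed cuspidal, representation $\pi'$ in the packet of ${\rm ind}(\chi')$ with $\pi'_2\simeq\pi_2$, unramified at all odd primes, and $\pi'_\infty\simeq{\rm U}_{j',1}$. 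This produces a nonzero element of $S_{j',1}(\Gamma_2[2])$ with $j'\leq 2w_K\leq 8$, contradicting the geometric vanishing of Proposition \ref{vanishsj812}. Your proposal contains no analogue of this descent-in-weight step, which is the actual heart of the proof; and your closing ``squaring'' check (which reproduces the paper's own remark) only covers $j\leq 19$, or $j\leq 26$ using Theorem \ref{vanishingSj2level1}, so the theorem remains unproved for all larger $j$.
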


\noindent Our proof will be an elaboration of the one of Proposition \ref{k=1}. We shall also use the vanishing ${S}_{j,1}(\Gamma_2[2])=0$ for $j\leq 8$, proved by Cl\'ery and van der Geer in this paper (Proposition \ref{vanishsj812}).

\begin{proof}  As we have $-1 \in \Gamma_2[2]$ we may also assume $j$ is even. Let us denote by $J$ the principal congruence subgroup of ${\rm PGSp}_4(\mathbb{Z}_2)$ and by $\mathbb{A}$ the adele ring of $\mathbb{Q}$; we easily check ${\rm PGSp}_4(\mathbb{A}) = {\rm PGSp}_4(\mathbb{Q}) \cdot ({\rm PGSp}_4(\mathbb{R})^0 \times J \times \prod_{p\neq 2} {\rm PGSp}_4(\mathbb{Z}_p)).$
Moreover, classical arguments show that we have $S_{j,1}(\Gamma_2[2])=0$ if, and only if, there is no cuspidal automorphic representation $\pi$ of ${\rm PGSp}_4$ over $\mathbb{Q}$ which is unramified at every odd prime, such that $\pi_2$ has a non-zero invariant under $J$, and with $\pi_\infty \simeq {\rm U}_{j,1}$. Thus we fix such a $\pi$ and consider $\pi^{\rm GL}$, as well as the associated collection $(d_i,n_i,\pi_i)_{i \in I}$, given by (b) above. \ps
By the same argument as in the case $\Gamma=\Gamma_2$, there exists $i \in I$ with $d_i \geq 2$. If we have $n_i=1$, which forces ${\rm inf}\, \pi_i = \{0\}$, we must have $d_i=2$ and $j \in \{0,2\}$ by the shape of ${\rm inf}\, {\rm U}_{j,1}$. But this is a contradiction as ${S}_{j,1}(\Gamma_2[2])=0$ for $j=0,2$. So we must have $n_i=d_i=2$, $I=\{i\}$, and $\pi_i$ is orthogonal with ${\rm inf} (\pi_i)_\infty = \{-j/2,j/2\}$.
\ps
The classification of orthogonal cuspidal automorphic representations of ${\rm GL}_2$ over $\QQ$, a very special case of Arthur's results, is well-known. First of all, the central character of such a representation has order $2$, hence corresponds to some uniquely defined quadratic extension $K$ of $\QQ$. Moreover, for any Hecke character $\chi$ of $K$ which is trivial on the idele group of $\QQ$, and with $\chi^2 \neq 1$, the automorphic induction of $\chi$ to $\QQ$ is an orthogonal cuspidal automorphic representation of ${\rm GL}_2$ over $\QQ$ that we shall denote by ${\rm ind}(\chi)$. It turns out that they all have this form, and that we have moreover ${\rm ind}(\chi) \simeq {\rm ind}(\chi')$ if, and only if, we have $\chi=\chi'$ or $\chi^{-1} = \chi'$. Last but not least, to any $\chi$ as above Arthur associates a global packet $\Pi(\chi)=\bigotimes'_v \Pi_v(\chi_v)$ of irreducible admissible representations of ${\rm PGSp}_4$ over $\QQ$, whose elements are exactly the discrete automorphic representations $\omega$ which satisfy $\omega^{\rm GL} = {\rm ind}(\chi) \otimes |.|^{1/2} \boxplus {\rm ind}(\chi) \otimes |.|^{-1/2}$ ({\it Soudry type}); in this ``stable case'' any element of $\Pi(\chi)$ is automorphic by Arthur's multiplicity formula.
\ps\ps

Going back to our specific situation, let $K$ and $\chi$ be such that $\pi_i \simeq {\rm ind}(\chi)$. Since $\pi_i$ is unramified outside $2$, then so is $K$ and we necessarily have
$$K=\QQ(\sqrt{d})\, \, \, {\rm with}\, \, \, d\,\in \{-2,\,-1,\,2\}.$$
We first claim $d \neq 2$. Indeed, a Hecke character of real quadratic field has the form $|.|^{s_0} \chi_0$ with $\chi_0$ a finite order character and $s_0 \in \CC$. We would thus have $\{s_0,s_0\} = {\rm inf} (\pi_i)_\infty  =\{j/2,-j/2\}$, which implies $s_0=j=0$, which is again absurd. So $K$ is imaginary quadratic. As $\chi_\infty$ is trivial on $\RR^\times$ by assumption on $\chi$, and up to replacing $\chi$ by $\chi^{-1}$ if necessary, the shape of ${\rm inf}\, (\pi_i)_\infty$ implies then $\chi_\infty(z) = (z/\overline{z})^{j/2}$. \ps

Here comes the main trick. Let $\eta$ be the Hecke character of the statement of Lemma \ref{consthecke} below, and set $w_K=|\mathcal{O}_K^\times|$. We may assume $j\geq 2$, so there are unique integers $r$ and $j'/2$, with $r\geq 0$ and $1 \leq j'/2 \leq w_K$, such that $j/2 \,=\,r \,w_K \, +\, j'/2$.  Consider the Hecke character $\chi' = \chi \eta^{-r}$ of $K$. It is obviously trivial on the idele group of $\QQ$, and it satisfies $(\chi')^2 \neq 1$ as we have $\chi'_\infty(z)= (z/\overline{z})^{j'/2}$ with $j'>0$. Consider now the packet $\Pi(\chi')$. As we have $\chi_2=\chi'_2$, the local Arthur packets $\Pi_2(\chi_2)$ and $\Pi_2(\chi'_2)$ do coincide. As $\pi$ belongs to $\Pi(\chi)$, its local component $\pi_2$ also belongs to $\Pi_2(\chi_2)$. We may thus consider a representation $\pi'=\bigotimes'_v \pi'_v$ in $\Pi(\chi')$ with $\pi'_2\simeq \pi_2$, with $\pi'_p$ unramified for all odd prime $p$ (since $\chi'_p$ is unramified for such a $p$), and with $\pi'_\infty \simeq {\rm U}_{j',1}$. This last property holds because the Langlands packet associated to the Arthur packet $\Pi_\infty(\chi'_\infty)$, which is included in $\Pi_\infty(\chi'_\infty)$ by \cite[Prop. 7.4.1]{arthur}, is the one of ${\rm U}_{j',1}$ by Remark (a) above. As already explained, the representation $\pi'$ is discrete automorphic by Arthur, and even cuspidal as its Archimedean component is tempered. As $\pi'_2 \simeq \pi_2$ has non-zero invariants under the principal congruence subgroup $J$ of ${\rm PGSp}_4(\mathbb{Z}_2)$, it follows that $\pi'$ is generated by an element in ${S}_{j',1}(\Gamma_2[2])$ by the first paragraph above. But now we have the inequality
$j' \,\leq \,2 \,w_K \,\leq \,8$, a contradiction by the vanishing ${S}_{j',1}(\Gamma_2[2])=0$ for $j'\leq 8$. \end{proof}

\begin{lemma}\label{consthecke} Let $K=\QQ(\sqrt{d}) \subset \CC$ with $d=-1,-2$ and set $w_K=|\mathcal{O}_K^\times|$. There is a Hecke character $\eta$ of $K$ which is unramified outside $\{2,\infty\}$ and which satisfies $\eta_2=1$ and $\eta_\infty(z)=(z/\overline{z})^{w_K}$ for all $z \in K_\infty^\times$. Moreover, $\eta$ is trivial on the idele group of $\QQ$. \end{lemma}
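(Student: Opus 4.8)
The plan is to produce $\eta$ as an algebraic Hecke character of $K$ with the prescribed infinity type, exploiting that both fields $\QQ(i)$ and $\QQ(\sqrt{-2})$ have class number one. The guiding principle is that a Hecke character of $K$ which is unramified at every finite place is completely determined by its infinity type, and that such a character exists precisely when the infinity type is trivial on the global unit group $\mathcal{O}_K^\times$. First I would record this compatibility, and then read off the required local triviality at $2$ and over $\QQ$ from short computations with roots of unity.

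Concretely, since $\mathrm{Cl}_K = 1$ we have $\mathbb{A}_{K,f}^\times = K^\times \cdot \widehat{\mathcal{O}_K}^\times$, hence $\mathbb{A}_K^\times = K^\times\cdot (K_\infty^\times \times \widehat{\mathcal{O}_K}^\times)$ with $K^\times \cap (K_\infty^\times \times \widehat{\mathcal{O}_K}^\times) = \mathcal{O}_K^\times$ embedded diagonally. I would define $\eta$ on $K_\infty^\times \times \widehat{\mathcal{O}_K}^\times$ by declaring it trivial on $\widehat{\mathcal{O}_K}^\times$ (so that $\eta$ is unramified at every finite place, in particular outside $2$) and equal to $z \mapsto (z/\bar z)^{w_K}$ on $K_\infty^\times = \CC^\times$, and then extend by $K^\times$-invariance. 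For this to be well defined I must check that the proposed character is trivial on the diagonal copy of $\mathcal{O}_K^\times$, which reduces to $(\zeta/\bar\zeta)^{w_K} = 1$ for every root of unity $\zeta \in \mathcal{O}_K^\times$; this holds because $\bar\zeta = \zeta^{-1}$ gives $(\zeta/\bar\zeta)^{w_K} = \zeta^{2w_K} = 1$.

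It then remains to verify the two arithmetic constraints. For triviality over $\QQ$, I would restrict $\eta$ to $\mathbb{A}_\QQ^\times$: its infinity component is trivial since $z/\bar z = 1$ for $z \in \RR^\times$, and it is unramified at all finite places, so using $\mathbb{A}_\QQ^\times = \QQ^\times \cdot(\RR_{>0}\times \widehat{\ZZ}^\times)$ it must be the trivial character of $\QQ$. For the condition $\eta_2 = 1$ I would evaluate $\eta$ on the idele whose only nontrivial entry is a uniformizer $\varpi$ at the unique prime $\mathfrak{p}$ above $2$: writing $2 = \mathrm{unit}\cdot\varpi^2$ with $\varpi = 1+i$ (resp. $\varpi = \sqrt{-2}$), pulling the global element $\varpi \in K^\times$ out of this idele leaves a unit idele, so $\eta_{\mathfrak p}(\varpi) = (\bar\varpi/\varpi)^{w_K}$. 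A direct computation gives $\bar\varpi/\varpi = -i$ for $K = \QQ(i)$ and $\bar\varpi/\varpi = -1$ for $K = \QQ(\sqrt{-2})$, so $(\bar\varpi/\varpi)^{w_K} = 1$ in both cases; since $\eta_{\mathfrak p}$ is unramified and kills a uniformizer, it is trivial, i.e. $\eta_2 = 1$.

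The substantive point — and the only place where the exact shape of the statement matters — is this last computation: it is the equality $(\bar\varpi/\varpi)^{w_K}=1$ that forces both the restriction to these two specific fields and the particular exponent $w_K$, since $\bar\varpi/\varpi$ is a root of unity whose order ($4$ for $\QQ(i)$, $2$ for $\QQ(\sqrt{-2})$) must divide $w_K$. Everything else is formal once class number one is invoked, so I expect the verification of $\eta_2 = 1$ to be the crux, with the well-definedness check and the restriction to $\mathbb{A}_\QQ^\times$ being routine.
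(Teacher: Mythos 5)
Your proof is correct and is essentially the paper's argument in a slightly different packaging: the paper uses class number one to write $\mathbb{A}_K^\times = K^\times\cdot\bigl(K_\infty^\times \times K_2^\times \times \prod_{v\neq 2,\infty}\mathcal{O}_{K_v}^\times\bigr)$, so that existence of $\eta$ is equivalent to triviality of $z\mapsto (z/\overline{z})^{w_K}$ on $(\mathcal{O}_K[1/2])^\times$, which is generated by $\mathcal{O}_K^\times$ and a norm-$2$ element $\varpi$ with $\varpi/\overline{\varpi}\in\mathcal{O}_K^\times$, whereas you first construct the everywhere-unramified character (triviality on $\mathcal{O}_K^\times$ only) and then verify $\eta_{\mathfrak{p}}(\varpi)=1$ a posteriori by pulling the global $\varpi$ out of the idele. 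The substantive checks are identical in both versions --- $(\zeta/\overline{\zeta})^{w_K}=1$ for roots of unity, $(\varpi/\overline{\varpi})^{w_K}=1$ for the generator of the prime above $2$, and the same decomposition of $\mathbb{A}_\QQ^\times$ for the final assertion --- so this is the same proof.
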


\begin{proof} Denote by $\mathbb{A}_F$ the adele ring of the number field $F$. As $\mathcal{O}_K$ has class number $1$ we have the decomposition $\mathbb{A}_K^\times = K^\times \cdot (K_\infty^\times \times K_2^\times \times \prod_{v\neq 2,\infty} {\mathcal{O}^\times_{K_v}})$. This implies first the (unrequired) uniqueness of $\eta$, and shows that its existence is equivalent to the fact that the morphism $z \mapsto (z/\overline{z})^{w_K}, \CC^\times \rightarrow \CC^\times$, is trivial on the subgroup $(\mathcal{O}_K[1/2])^\times$. This is indeed the case as this latter group is generated by $\mathcal{O}_K^\times$ and by some element $\pi \in \mathcal{O}_K$ with norm $2$ and which satisfies $\pi/\overline{\pi} \in \mathcal{O}_K^\times$. The last assertion follows from the equality $\mathbb{A}_\QQ^\times = \QQ^\times \cdot (\RR^\times \times \prod_p \ZZ_p^\times)$ and the properties of $\eta$.
\end{proof}

\end{section}
\begin{section}{Proof of Theorem \ref{YoshidaLifts} by Ga\"etan Chenevier}

In this second appendix, we explain how to deduce Theorem \ref{YoshidaLifts} stated in the paper of Cl\'ery and van der Geer from the works of R\"osner \cite{Roesner} and Weissauer \cite{weissauerbook}, for the convenience of the reader. \par \smallskip

We first recall some results on Yoshida lifts taken from Weissauer's work \cite[Chap. 4 \& 5]{weissauerbook}. Let us fix $f$ and $g$ two non-proportional elliptic eigen newforms of same even weight $j+2$, and let us denote by $\pi$ and $\pi'$ the (distinct) cuspidal automorphic representations of ${\rm GL_2}(\mathbb{A})$ that they generate, $\mathbb{A}$ being the adele ring of $\QQ$. In particular, $\pi_\infty$ and $\pi'_\infty$ are isomorphic discrete series, and we may assume that $\pi$ and $\pi'$ are normalized such that this discrete series has trivial central character (as $j$ is even). For Yoshida lifts $Y(f,g)$ of $f$ and $g$ to exist, we also need to assume that $f$ and $g$ have the same {\it nebentypus}, i.e. that $\pi$ and $\pi'$ have the same central character. \par \smallskip

Let $\Pi(\pi,\pi')=\bigotimes'_v \Pi_v(\pi_v,\pi'_v)$ be the restricted tensor product, over all the places $v$ of $\QQ$, of the local $L$-packet $\Pi_v(\pi_v,\pi'_v)$ of irreducible admissible representations of ${\rm GSp}_4(\QQ_v)$ associated to the pair $\{\pi_v,\pi'_v\}$ by Weissauer. For each place $v$ of $\QQ$, this local $L$-packet has either $1$ or $2$ elements, including a unique {\it generic} element; it has another element if, and only if, both $\pi_v$ and $\pi'_v$ are discrete series \cite[\S 4.10.3]{weissauerbook} \cite[Lemma 4.5]{Roesner}. In particular, the (Langlands) Archimedean packet $\Pi_\infty(\pi_\infty,\pi'_\infty)$ has two elements, the non-generic one being the holomorphic limit of discrete series ${\rm U}_{j,2}$ recalled in appendix \ref{appA}. Also, if both $\pi$ and $\pi'$ are unramified at the finite place $v$ then $\Pi_v(\pi_v,\pi'_v)$ is a singleton (thus $\Pi(\pi,\pi')$ is finite). The multiplicity formula proved by Weissauer \cite[Thm. 5.2, p. 186]{weissauerbook} states that an element $\varpi$ of $\Pi(\pi,\pi')$ is discrete automorphic if, and only if, there is an even number of places $v$ such that $\varpi_v$ is non-generic. He also shows that such an element has multiplicity one in the discrete spectrum of ${\rm GSp}_4$; it is necessarily cuspidal as the two elements of $\Pi_\infty(\pi_\infty,\pi'_\infty)$ are tempered. \par \smallskip
Let us denote by $J \subset {\rm GSp}_4(\ZZ_2)$ the principal congruence subgroup. Some classical arguments show that the Yoshida lifts $Y(f,g)$ which belong to the space $YS_{j,2}^{s[w]}$ of the statement are in natural bijection with certain vectors of the finite part of the cuspidal automorphic representations $\varpi$ in $\Pi(\pi,\pi')$ having the following properties :\par \smallskip
(i) $\varpi_\infty \simeq {\rm U}_{j,2}$, \par \smallskip
(ii) $\varpi_p^{{\rm GSp}_4(\ZZ_p)} \neq 0$ (in which case we have $\, \dim \, \varpi_p^{{\rm GSp}_4(\ZZ_p)}=1$), \par \smallskip
(iii) the $s[w]$-isotypic component $\varpi_2^{J}$ is non-zero.\par \medskip
\noindent
More precisely, the Yoshida lifts $Y(f,g)$ corresponding to such a $\varpi$ form a linear subspace $YS_{j,2}^{s[w]}[\varpi] \subset YS_{j,2}^{s[w]}$ isomorphic to the $s[w]$-isotypic component of $\varpi_2^{J}$ as an $S_6$-representation. The space $YS_{j,2}^{s[w]}$ of the statement is then the direct sum of its subspaces $YS_{j,2}^{s[w]}[\varpi]$ where $f,g$ and $\varpi$ vary, with $\varpi$ cuspidal automorphic satisfying (i), (ii) and (iii). \par \smallskip

We still fix elliptic newforms $f$ and $g$ as above, hence $\pi$ and $\pi'$ as well. By \cite[Cor. 4.14]{Roesner}, we know first that $\Pi_p(\pi_p,\pi'_p)$ has an element with non-zero invariants under ${\rm GSp}_4(\mathbb{Z}_p)$ if, and only if, both $\pi_p$ and $\pi'_p$ are unramified. Moreover, the same corollary asserts that if $\Pi_2(\pi_2,\pi'_2)$ has an element with non-zero $J$-invariants, then both $\pi_2$ and $\pi'_2$ have non-zero invariants under the principal congruence subgroup of ${\rm GL}_2(\ZZ_2)$. As a first consequence, if $\varpi \in \Pi(\pi,\pi')$ does satisfy (ii) and (iii) then $f$ and $g$ are newforms on $\Gamma_0(N)$ with $N|4$ (and both $\pi$ and $\pi'$ have a trivial central character). Moreover, by the statement recalled above concerning the multiplicity formula ("even parity of the number of non-generic places"), there is at most one cuspidal automorphic $\varpi \in \Pi(\pi,\pi')$ satisfying (i), (ii) and (iii) above, and it has the property that $\varpi_2$ is the non-generic element of $\Pi_2(\pi_2,\pi'_2)$. In particular, this latter $L$-packet has two elements and both $\pi_2$ and $\pi'_2$ are discrete series of ${\rm GL}_2(\QQ_2)$ (thus neither $f$ nor $g$ can have level $1$). \par \smallskip

By R\"osner \cite[Lemma 5.22]{Roesner}, there are only three possibilities for the isomorphism class of a representation of ${\rm PGL}_2(\mathbb{Q}_2)$ generated by an elliptic newform of level $\Gamma_0(2)$ or $\Gamma_0(4)$, namely the {\it Steinberg} representation {\rm St} and its unramified quadratic twist ${\rm St}'$ in level $\Gamma_0(2)$, and a certain supercuspidal representation ${\rm Sc}$ in level $\Gamma_0(4)$. In particular, there are all discrete series. Assume now that $\sigma$ and $\sigma'$ are (possibly equal) elements of the set $\{{\rm St}, {\rm St'}, {\rm Sc}\}$ and let $\tau$ be the non-generic element of $\Pi_2(\sigma,\sigma')$. View the finite dimensional vector space $\tau^J$ as a representation of $S_6$. In order to prove the theorem it only remains to show that either $\tau^J$ is $0$ or we are in exactly one of the following situations : \par \smallskip

(a) $\{\sigma,\sigma'\}=\{{\rm St},{\rm St'}\}$ and $\tau^J \simeq s[1^6]$, \par \smallskip

(b) $\{\sigma,\sigma'\}=\{{\rm Sc}\}$ and $\tau^J \simeq s[2,1^4]$,\par \smallskip

(c) $\{\sigma,\sigma'\}=\{{\rm St}\}$ or $\{{\rm St'}\}$ and $\tau^J \simeq s[2^3]$.\par \smallskip

This is a delicate analysis which fortunately has 
been carried out by R\"osner. Indeed, this is exactly the content of the left-bottom part of \cite[Table 4.2, p. 63]{Roesner} with $q=2$. This table shows that there are just three non-zero possible representations for $\tau^J$, denoted by $\theta_2$, $\theta_5$ and $\chi_9(1)$ there but which correspond respectively to the representations $s[2^3]$, $s[1^6]$ and $s[2,1^4]$ by R\"osner's other table \cite[Table 5.2, p. 103]{Roesner}, exactly according to the three cases above (read the table with $\Pi_1={\rm Sc}$, $\xi_\mu {\rm St} = {\rm St}'$, and take for $\mu$ either the trivial character of $\QQ_2^\times$ or its unramified quadratic character).  $\square$

\end{section}

\par \medskip

{\bf Acknowledgement.} {\rm  Ga\"etan Chenevier is supported by the C.N.R.S. and by the project ANR-14-CE25 (PerCoLaTor). He would like to thank Fabien Cl\'ery and Gerard van der Geer for inviting him to include these two appendices to their paper.}
\par \smallskip
{\sc Laboratoire de Math\'ematiques d'Orsay, Universit\'e Paris-Sud, Universit\'e Paris-Saclay, 91405 Orsay, France.}
\par \smallskip
\indent
{\it E-mail address:} {\tt gaetan.chenevier@math.cnrs.fr}
\end{appendix}

\end{document}